\documentclass[dvipdfmx,12pt]{article}


\setlength{\topmargin}{-0.5cm}
\setlength{\evensidemargin}{0.5cm}
\setlength{\oddsidemargin}{0.5cm}
\setlength{\textheight}{22cm}
\setlength{\textwidth}{15cm}

\usepackage{amssymb,amscd,amsthm,amsmath}
\usepackage{graphicx}
\usepackage{tikz}
\usepackage{float}
\usetikzlibrary{cd}
\usetikzlibrary{intersections,calc,arrows.meta}
\usepackage{mathrsfs}

\newtheorem{theorem}{Theorem}[section]
\newtheorem{lemma}[theorem]{Lemma}
\newtheorem{proposition}[theorem]{Proposition}
\newtheorem{cor}[theorem]{Corollary}

\theoremstyle{definition}
\newtheorem{definition}[theorem]{Definition}
\newtheorem{example}[theorem]{Example}
\newtheorem{remark}[theorem]{Remark}

\numberwithin{equation}{section}

\newcommand{\B}{\mathbb{B}}
\newcommand{\C}{\mathbb{C}}

\newcommand{\K}{\mathbb{K}}
\newcommand{\M}{\mathbb{M}}
\newcommand{\N}{\mathbb{N}}

\newcommand{\R}{\mathbb{R}}
\newcommand{\T}{\mathbb{T}}
\newcommand{\Z}{\mathbb{Z}}
\newcommand{\cA}{\mathcal{A}}
\newcommand{\cB}{\mathcal{B}}
\newcommand{\cC}{\mathcal{C}}
\newcommand{\cD}{\mathcal{D}}
\newcommand{\cG}{\mathcal{G}}

\newcommand{\cH}{\mathcal{H}}
\newcommand{\cK}{\mathcal{K}}
\newcommand{\cL}{\mathcal{L}}
\newcommand{\cO}{\mathcal{O}}

\newcommand{\cE}{\mathcal{E}}
\newcommand{\cF}{\mathcal{F}}

\newcommand{\cU}{\mathcal{U}}

\newcommand\Aut{\operatorname{Aut}}

\newcommand\End{\operatorname{End}}
\newcommand\Mor{\operatorname{Mor}}

\newcommand\Tr{\operatorname{Tr}}
\newcommand\id{\operatorname{id}}
\newcommand\Ad{\operatorname{Ad}}
\newcommand\Hom{\operatorname{Hom}}

\newcommand{\Ind}{\operatorname{Ind}}

\newcommand\Rep{\operatorname{Rep}}
\newcommand\inpr[2]{\langle{#1,#2}\rangle}

\newcommand{\biota}{\overline{\iota}}
\newcommand{\brho}{\overline{\rho}}
\newcommand{\bpi}{\overline{\pi}}
\newcommand{\bR}{\overline{R}}

\newcommand{\hG}{\widehat{G}}
\newcommand{\halpha}{\hat{\alpha}}
\newcommand{\calpha}{\check{\alpha}}

\newcommand{\tH}{\widetilde{\cH}}
\newcommand{\talpha}{\widetilde{\alpha}}
\newcommand{\bu}{\overline{u}}
\newcommand{\bpsi}{\overline{\psi}}
\title{Minimal compact group actions on C$^*$-algebras with simple fixed point algebras}

\author{Masaki Izumi
\thanks{Supported in part by JSPS KAKENHI Grant Number JP20H01805}\\
Graduate School of Science \\
Kyoto University \\
Sakyo-ku, Kyoto 606-8502, Japan} 


\begin{document} 
\maketitle
\centerline{In memory of the late Professor Huzihiro Araki} 
\begin{abstract} The notion of qausi-product actions of a compact group on a C$^*$-algebra was introduced 
by Bratteli et al. in their attempt to seek an equivariant analogue of Glimm's characterization 
of non-type I C$^*$-algebras. 
We show that a faithful minimal action of a second countable compact group on a separable C$^*$-algebra 
is quasi-product whenever its fixed point algebra is simple.  
This was previously known only for compact abelian groups and for profinite groups. 
Our proof relies on a subfactor technique applied to finite index inclusions of simple C$^*$-algebras in the purely infinite case, 
and also uses ergodic actions of compact groups in the general case.  
As an application, we show that if moreover the fixed point algebra is a Kirchberg algebra, such an action is 
always isometrically shift-absorbing, and hence is classifiable by the equivariant KK-theory due to a recent 
result of Gabe-Szab\'o. 
\end{abstract}
\section{Introduction} 
Compact group actions on C$^*$-algebras have been extensively studied, especially in relation to mathematical physics 
in the early history of operator algebras, as internal symmetries in models in physics are often described by 
compact group actions (see, for example, \cite{DR89II}, \cite{DR90}).  
Araki-Haag-Kastler-Takesaki's work \cite{AHKT77} on chemical potential is one of the highlights in this subject in the 70's, 
which inspired a lot of subsequent work both in mathematics and physics.

In the pure mathematics side, the most notable class of compact group actions on C$^*$-algebras is probably quasi-product actions 
introduced by Bratteli et al. \cite{BER87}, \cite{BEEK89}, \cite{BEK93}. 
Roughly speaking, while the classical Glimm theorem characterizes non-type I C$^*$-algebras as those having sub-quotients 
isomorphic to the CAR algebra (see \cite[Theorem 6.7.3]{Pe} for the precise statement), quasi-product actions are, by definition, 
those having invariant-quotients equivariantly isomorphic to the UHF algebras with product type actions. 
\cite[Theorem 1]{BEK93} shows that ten conditions on a compact group action are mutually equivalent, 
which can be taken as the definition (and characterization) of a quasi-product action. 
The existence of an invariant pure state is one of them, which is very useful for practical applications 
because it assures that every invariant state can be approximated by invariant pure states (see \cite{BKR97}). 

A Galois correspondence for minimal actions of compact groups on factors was established by Izumi-Longo-Popa \cite{ILP98}. 
Its C$^*$-counterpart was recently obtained by Mukohara \cite{Mu24} for quasi-product actions with simple fixed point algebras. 
She also showed that the fixed point inclusions for such actions are C$^*$-irreducible in the sense of 
R\o rdam \cite{R23}. 
This also proves the importance of quasi-product actions.  

One of the purposes of this paper is to prove the following result, which shows that the class of quasi-product actions 
is broad and relatively easy to recognize: 

\begin{theorem}\label{main1} Let $\alpha$ be a faithful action of a second countable compact group $G$ on a separable C$^*$-algebra $A$. 
We assume that $\alpha$ is minimal in the sense that the relative commutant ${A^\alpha}'\cap M(A)$ is trivial, where 
$A^\alpha$ is the fixed point algebra of $\alpha$ and $M(A)$ is the multiplier algebra of $A$. 
Assume further that $A^\alpha$ is simple. 
Then $\alpha$ is quasi-product. 
(Note that $A$ is necessarily simple thanks to \cite[Lemma 24]{WII}, \cite[Theorem 1,(3)]{BEK93}.)
\end{theorem}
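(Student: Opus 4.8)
The plan is to verify one of the ten equivalent conditions of \cite[Theorem~1]{BEK93}, and the most convenient target is the existence of an $\alpha$-invariant pure state on $A$. Write $E\colon A\to A^\alpha$, $E(a)=\int_G\alpha_g(a)\,dg$, for the canonical conditional expectation and decompose $A=\overline{\bigoplus_{\pi\in\hG}A_\pi}$ into its spectral subspaces; each $A_\pi$ is a Hilbert $A^\alpha$-bimodule of some finite rank $m_\pi$. Unwinding the BEK list, quasi-product is equivalent to \emph{full multiplicity}, $m_\pi=d_\pi$ for every $\pi\in\hG$ with $d_\pi=\dim\pi$: given full multiplicity one selects $d_\pi$ orthonormal generators of each $A_\pi$ over $A^\alpha$ and multiplies them along the tensor powers $\pi^{\otimes n}$ to assemble a product-type UHF subsystem, which carries an invariant pure state. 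The entire problem thus reduces to proving $m_\pi=d_\pi$, and minimality together with simplicity of $A^\alpha$ are exactly the hypotheses that must force this.

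I would first dispose of the purely infinite case, where $A$---and hence $A^\alpha$---is simple and purely infinite. Here each finitely generated intermediate algebra $A^\alpha\subset\langle A^\alpha,A_\pi\rangle$ is a finite-index inclusion of simple purely infinite C$^*$-algebras, so the sector calculus for such inclusions applies: the bimodules ${}_{A^\alpha}(A_\pi)_{A^\alpha}$ generate a rigid C$^*$-tensor category whose dimensions are computed by the conjugate equations. Minimality, in the form ${A^\alpha}'\cap M(A)=\C$, says precisely that the tensor unit is simple, which pins this category down as $\Rep G$ and forces each sector to have integer dimension $d_\pi$; in a purely infinite algebra a sector of dimension $d_\pi$ is implemented by $d_\pi$ isometries with orthogonal ranges summing to $1$, and these isometries lie in $A_\pi$, exhibiting $m_\pi=d_\pi$ directly.

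In the general case the intermediate tower need not have finite index, and I would instead produce and exploit an ergodic action. Fixing a pure state $\phi_0$ on the simple algebra $A^\alpha$, lift it to the invariant state $\phi_0\circ E$ and pass to its GNS triple $(\pi,\cH,\xi)$; invariance of $\xi$ yields a unitary representation $U$ of $G$ implementing $\alpha$, and the normal conditional expectation obtained by averaging $\mathrm{Ad}\,U$ gives $\pi(A)''{}^{G}=\pi(A^\alpha)''=:N$. The restriction of $\mathrm{Ad}\,U$ to the relative commutant $P:=N'\cap\pi(A)''$ then has fixed-point algebra $P^{G}=N'\cap N=Z(N)$, and minimality together with simplicity of $A^\alpha$ are invoked to force $Z(N)=\C$, i.e.\ $G$ acts \emph{ergodically} on $P$. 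The multiplicity of each $\pi\in\hG$ in this ergodic action encodes the same integer $m_\pi$, so the H{\o}egh-Krohn--Landstad--St{\o}rmer bound gives $m_\pi\le d_\pi$; the reverse inequality, saturating the bound, is forced by faithfulness of $\alpha$ and minimality, and once $m_\pi=d_\pi$ the ergodic algebra $P$ is the full-multiplicity model from which the product-type subsystem, and hence an invariant pure state on $A$, are read off.

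The step I expect to be the main obstacle is the general case, and within it the interface between the von Neumann-algebraic model and the C$^*$-algebra $A$. Two points are delicate. First, deriving $Z(N)=\C$, hence ergodicity of the relative-commutant action, from the C$^*$-minimality hypothesis ${A^\alpha}'\cap M(A)=\C$ is not formal, since the latter concerns $M(A)$ rather than a bicommutant; it is here that simplicity of $A^\alpha$ and faithfulness of $\alpha$ must be spent. Second, the ergodic estimates only deliver full multiplicity at the level of $\pi(A)''$, and recovering an honest $\alpha$-invariant pure state on the separable C$^*$-algebra $A$ demands a norm-approximation argument which, absent the abundance of isometries available in the purely infinite case, has to be powered by the multiplicity bounds themselves. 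Establishing the saturation $m_\pi\ge d_\pi$ and ruling out any collapse of the multiplicities below $d_\pi$ is the crux on which the theorem turns.
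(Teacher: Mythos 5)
There is a genuine gap, and it sits at the very first step: the claim that quasi-product is equivalent to ``full multiplicity'' $m_\pi=d(\pi)$, so that ``the entire problem reduces to proving $m_\pi=d(\pi)$.'' That equivalence is not among the conditions of \cite[Theorem 1]{BEK93}, and it fails as a reduction. Full multiplicity in your sense --- the existence, for each $\pi\in\hG$, of $d(\pi)$ isometries in $M(A)$ with orthogonal ranges summing to $1$ and transforming like $\pi$ --- is essentially automatic under the hypotheses: after tensoring with $\cO_\infty$ (which changes neither minimality, nor simplicity of the fixed point algebra, nor the quasi-product property) it follows from \cite[Lemma III 3.4]{AHKT77}; this is Lemma \ref{equivalence1} of the paper and is treated there as a preliminary, not as the goal. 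The isometries $\psi(\pi)_i$ obtained this way generate a Doplicher--Roberts/Cuntz-type subalgebra of $M(A)$, not a UHF subalgebra of $A$, and ``multiplying along tensor powers'' produces no invariant pure state on $A$. The missing ingredient is that generators of the spectral subspaces can be chosen asymptotically central with the lower bound $\limsup_n\|ay_{n1}\|\geq\|a\|/d(\pi)$ (condition (3) of Theorem \ref{qp1}), equivalently that the pre-dual endomorphisms $\calpha_\pi$, $\pi\neq 1$, are properly outer (Lemma \ref{predual action}). That proper outerness is the actual content of the theorem, and your outline never engages with it.

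Concretely, the paper's proof runs as follows. In the purely infinite case the issue is not the dimension count you describe (which is easy) but Theorem \ref{purely infinite}: every irreducible finite-index inclusion of purely infinite simple C$^*$-algebras has the property (BEK), proved by an ultraproduct argument combined with C$^*$-irreducibility of such inclusions; this yields proper outerness of every irreducible $\rho$ with $d(\rho)>1$, hence of the $\calpha_\pi$. In the general case one assumes $\calpha_{\pi_0}$ is not properly outer and runs Kishimoto's argument as adapted in \cite[Theorem 7.5]{I02}; the contradiction requires knowing that the C$^*$-algebra $D$ generated by $\Pi(A^\alpha)$ and the intertwiner spaces $\Hom_{A^\alpha}(\Pi,\Pi\circ\calpha_\pi)$ is simple with a conditional expectation onto $\Pi(A^\alpha)$ (Lemmas \ref{sim-con1} and \ref{sim-con2}), and this is established by a Doplicher--Roberts reconstruction identifying a universal model with $A\otimes B$ for an auxiliary ergodic $G$-C$^*$-algebra $B$, followed by Lemma \ref{ergodic}, which itself feeds on the purely infinite case. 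Your GNS/relative-commutant sketch in the general case points vaguely toward ergodic actions, but it supplies neither the norm estimates nor the simplicity statement on which the argument turns; as you yourself note, the passage from the von Neumann model back to $A$ is exactly where the work lies, and identifying the crux as ``saturating $m_\pi\geq d(\pi)$'' misplaces it.
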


When $G$ is abelian, the statement of the theorem is reduced to the proper outerness of the dual action 
(see \cite[Theorem 1]{BEEK89}), which in turn follows from Kishimoto's result \cite[Lemma 1.1]{K81} for automorphisms. 
In the general case, the dual action consists of endomorphisms whose proper outerness is technically much subtler as was 
observed in \cite{BEK93} and \cite{I02}. 
In fact, the author challenged the problem of generalizing Kisimoto's result to irreducible endomorphisms 
of finite index, and ended up with a result under the assumption of finite depth condition \cite[Theorem 7.5]{I02}, 
which implies the statement of Theorem \ref{main1} for profinite $G$.   
This work is to challenge the problem again after more than 20 years.  

Classification of group actions on Kirchberg algebras is a growing subject and a recent ground breaking result of 
Gabe-Szab\'o \cite{GS22-2} established an equivariant version of the Kirchberg-Phillips classification theorem \cite{Ph00} 
(see also \cite{GS22-1}, \cite{I10}, \cite{IMI}, \cite{IMII}, \cite{Me21}, \cite{OS21}, \cite{Sz21} for related results). 
They showed that amenable isometrically shift-absorbing actions of second countable locally compact groups on Kirchberg algebras 
are completely classified by equivariant $KK$-equivalence.  
An action of a locally compact group $G$ on a C$^*$-algebra $A$ is said to be isometrically shift-absorbing 
if the quasi-free action on the Cuntz algebra $\cO_\infty$ arising from the infinite direct sum of the regular representation of 
$G$ equivariantly embeds in the central sequence algebra of $A$. 
For a countable discrete group, this condition is equivalent to the outerness of the action (see \cite{IM2010}). 
However, for other classes of groups, it is a non-trivial task to characterize when this property holds. 
In fact, for the real numbers $G=\R$, it is equivalent to the Rokhlin property (see \cite[Corollary 6.15]{GS22-2}, \cite{Sz21}).  

When $G$ is compact, every isometrically shift-absorbing action is quasi-product by \cite[Theorem 1,(8)]{BEK93}, 
and Mukohara \cite{Mu24} showed that it is a minimal action having the fixed point algebra purely infinite and simple. 
We show the converse when the fixed point algebra is a Kirchberg algebra, as an application of Theorem \ref{main1}. 

\begin{theorem}\label{main2} Let $G$ be a second countable compact group, and let $\alpha$ be a faithful minimal action of 
$G$ on a separable C$^*$-algebra $A$ whose fixed point algebra is a Kirchberg algebra. 
Then $\alpha$ is isometrically shift-absorbing. 
(Note that $A$ is necessarily a Kirchbeg algebra thanks to \cite[Proposition 3]{DLRZ02}.)
\end{theorem}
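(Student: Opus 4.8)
The plan is to deduce Theorem~\ref{main2} from Theorem~\ref{main1} together with the purely infinite structure of the fixed point algebra. Since a Kirchberg algebra is in particular simple, Theorem~\ref{main1} immediately yields that $\alpha$ is quasi-product, and $A$ itself is a Kirchberg algebra by \cite[Proposition 3]{DLRZ02}. Recall that, writing $\lambda_G$ for the left regular representation of $G$ and $\gamma$ for the quasi-free action of $G$ on $\cO_\infty$ determined by $\infty\cdot\lambda_G$, the action $\alpha$ is isometrically shift-absorbing precisely when there is a unital $G$-equivariant $*$-homomorphism $(\cO_\infty,\gamma)\to (F_\alpha(A),\widetilde\alpha)$, where $F_\alpha(A)$ denotes the central sequence algebra of $A$ equipped with the induced $G$-action $\widetilde\alpha$. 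Thus the task reduces to manufacturing such an embedding, and the content beyond Theorem~\ref{main1} is to promote the quasi-product structure into a genuine family of Cuntz isometries carrying the regular representation.

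By Peter--Weyl, $\infty\cdot\lambda_G\cong\bigoplus_{\pi\in\hG}\infty\cdot\pi$, so producing the desired homomorphism amounts to producing, for every $\pi\in\hG$, countably many mutually range-orthogonal $\pi$-equivariant isometry tuples in $F_\alpha(A)$, with ranges orthogonal across distinct $\pi$, satisfying the Cuntz relations. Concretely, fixing an orthonormal basis of the representation space $V_\pi$ with matrix coefficients $\pi(g)_{ji}$, a $\pi$-tuple is a family $S_1,\dots,S_{d_\pi}$ of isometries with $S_i^*S_j=\delta_{ij}$, mutually orthogonal ranges, and $\widetilde\alpha_g(S_i)=\sum_j\pi(g)_{ji}S_j$. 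I would assemble these tuples, over all $\pi$ and all multiplicities, into a single copy of the model $\cO_\infty$ inside $F_\alpha(A)$.

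There are two structural inputs. First, I would show that the fixed-point algebra $F_\alpha(A)^{\widetilde\alpha}$ of the induced action is purely infinite and contains a unital copy of $\cO_\infty$; this reflects the fact that $A^\alpha$ (equivalently $A$) is a Kirchberg algebra and supplies a reservoir of $G$-invariant orthogonal isometries. Second, because $\alpha$ is quasi-product, I would verify that the spectral subspace $F_\alpha(A)(\pi)$ is nonzero --- indeed full --- for every $\pi$: the equivariant finite-dimensional approximations guaranteed by quasi-product (\cite[Theorem 1]{BEK93}) survive the passage to the central sequence algebra and provide, for each $\pi$, approximately central elements transforming under $\pi$. Combining these, I would use pure infiniteness to compress and orthogonalize the spectral elements into genuine isometries $S_1,\dots,S_{d_\pi}$ with the exact $\pi$-equivariance above, and then use the reservoir of $G$-invariant isometries to spread them into infinitely many mutually orthogonal copies and to accommodate all $\pi$ simultaneously, via a reindexing/diagonal argument that lands the system in $F_\alpha(A)$.

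I expect the main obstacle to be reconciling the merely approximate, matrix-unit-type structure delivered by quasi-product with the rigid Cuntz relations required of the $S_i$, while keeping everything exactly $G$-equivariant. One must run a $G$-equivariant version of Kirchberg's absorption and existence arguments, so that the spectral elements are converted into honest isometries transforming \emph{exactly} under $\pi$ (not merely up to small error) with \emph{exactly} orthogonal ranges, and so that the resulting map is unital and nondegenerate. Controlling this uniformly over the infinitely many irreducibles $\pi\in\hG$, and checking that the limiting object is a bona fide unital equivariant embedding of $(\cO_\infty,\gamma)$, is the delicate technical heart; once it is in place, the characterization of isometric shift-absorption recalled above completes the proof.
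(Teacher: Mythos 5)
Your overall plan --- reduce via Peter--Weyl to producing, for each $\pi\in\hG$, exactly $\pi$-equivariant isometry tuples in the central sequence algebra together with a reservoir of invariant isometries with orthogonal ranges, and then assemble these into a unital equivariant copy of $(\cO_\infty,\bigoplus_\pi\pi^{\oplus\infty})$ --- is exactly the final assembly step of the paper's proof. But the two ``structural inputs'' you invoke are precisely the hard content of the theorem, and neither is supplied. Input (a), that the invariant part of the central sequence algebra contains a unital copy of $\cO_\infty$, does \emph{not} follow from $A^\alpha$ being a Kirchberg algebra: you need invariant isometries that commute asymptotically with all of $A$, not merely with $A^\alpha$, and the largeness of $(A^\alpha)^\infty\cap A'$ is not automatic. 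Input (b) is even more delicate: condition (3) of Theorem \ref{qp1} does give nonzero central sequences in $A^\alpha_1(\pi)$, so the spectral subspaces of the central sequence algebra are nonzero, but passing from a nonzero element $y$ with $\alpha_g(y)=y\pi(g)$ to an exact tuple $S_1,\dots,S_{d(\pi)}$ with $S_i^*S_j=\delta_{ij}$ requires an equivariant polar-decomposition/orthogonalization argument that you only gesture at (``run a $G$-equivariant version of Kirchberg's absorption and existence arguments''). Naming the obstacle is not the same as overcoming it.

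The paper's mechanism for both inputs is a single device you do not have: an equivariant Kirchberg dilation theorem (Theorem \ref{dilation}), stating that every $G$-equivariant ucp map $\nu:A\to A$ is approximated by $x\mapsto V^*xV$ with $V$ an isometry in $A^\alpha$. Its proof needs two further ingredients absent from your proposal: a $G$-equivariant CPAP obtained from a Fej\'er-type approximation on $C(G)$ via F\o lner sequences for $\hG$ (Section 5), and the abundance of $\alpha$-invariant \emph{pure} states guaranteed by the quasi-product property (\cite[Theorem 3.1]{BKR97}), which is excised by invariant positive elements using pure infiniteness of $A^\alpha$. Granting the dilation theorem, one applies it to the inner endomorphism $\rho_{\cH}$ for $\cH\in\cU(G,\alpha)$ (such exactly invariant Hilbert spaces in $M(A)$ exist by Lemma \ref{equivalence1}); the resulting isometry $V_{\cH}\in(A^\alpha)^\infty$ satisfies $\rho_\cH(x)=V_\cH^*xV_\cH$, and the C$^*$-identity forces $V_{\cH}\cH\subset A^\infty\cap A'$. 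This produces the exact $\pi$-equivariant tuples (take $\cH=\cH_\pi$) and the invariant $\cO_\infty$-reservoir (take $\cH\cong 1\oplus 1$) in one stroke, with exactness inherited from the exact invariance of $\cH$ and $V_\cH$ rather than extracted from approximate spectral data. Without this, or an equivalent device, your argument does not close.
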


This paper is organized as follows. 
In Section 2, we summarize the basics of properly outer endomorphisms, 
inclusions of simple C$^*$-algebras of finite indices, and quasi-product actions of compact groups.   
A common key notion in the above three subjects is, what we call, the property (BEK) for an inclusion of prime 
C$^*$-algebras introduced in Bratteli-Elliott-Kishimoto \cite{BEK93}. 

In Section 3, we show that every irreducible inclusion of purely infinite simple C$^*$-algebras of finite index has 
the property (BEK). 
For the proof we crucially use  an ultraproduct technique and the fact that every irreducible inclusion  
of simple C$^*$-algebras of finite index is C$^*$-irreducible in the sense of R\o rdam.  
As a consequence, we prove the proper outerness of every irreducible proper endomorphism of finite index in the case of purely infinite simple C$^*$-algebras. 
This immediately implies Theorem \ref{main1} in the case of purely infinite $A^\alpha$. 
The general case is reduced to this case by a tensor product trick in Section 4, but the reduction argument 
is rather complicated. 
In fact, we use ergodic actions of $G$, which appear for a tensor categorical reason (see Remark \ref{MC}).  

In Section 5 we show an equivariant version of the completely positive approximation property (CPAP) of a nuclear 
C$^*$-algebra with a compact group action, as preparation for the proof of Theorem \ref{main2}. 
Theorem \ref{Fejer} is of interest in its own right as a Fej\'er type approximation of a general compact group. 
We prove Theorem \ref{main2} in Section 6 by showing an equivariant version of Kirchberg's dilation theorem for 
unital completely positive (ucp) maps of Kirchberg algebras. 

In Section 7, we treat quasi-free actions of compact groups on the Cuntz algebras 
as applications of our main results.  
\section{Preliminaries}
\subsection{Notation}
We use the following notation throughout the paper. 
For a C$^*$-algebra $A$, we denote by $U(A)$ the unitary group of $A$.  
For $u\in U(A)$, we denote by $\Ad u$ the automorphism of $A$ defined by $\Ad u(x)=uxu^*$ for $x\in A$. 
An automorphism of $A$ is called inner if it is of the form $\Ad u$, and otherwise it is called outer.  
We denote by $A_1$ the unit ball of $A$, and by $A_+$ the set of positive elements in $A$. 
We denote by $M(A)$ the multiplier algebra of $A$. 
For two C$^*$-algebras $A$, $B$, we denote by $A\otimes B$ the minimal tensor product. 
An action $\alpha$ of a topological group $G$ on a C$^*$-algebra $A$ is a continuous homomorphism from $G$ into 
the automorphism group $\Aut(A)$ of $A$ equipped with the point norm topology.    
We denote the fixed point algebra of $\alpha$ by $A^\alpha$, or $A^G$ when there is no possibility of confusion.   

An inclusion of C$^*$-algebras $A\supset B$ is called irreducible if the relative commutant $M(A)\cap B'$ is trivial. 
We denote by $\End(A)$ the set of endomorphisms of $A$, and call $\rho\in \End(A)$ irreducible if the inclusion $A\supset \rho(A)$ is irreducible.  

For a C$^*$-algebra $A$ and a free ultra-filter $\omega\in \beta\N\setminus\N$, we define
\[A^\infty=\ell^\infty(\N,A)/c_0(\N,A),\]   
\[A^\omega=\ell^\infty(\N,A)/c_\omega(\N,A),\] 
\[ c_\omega(A)=\{(x_n)\in \ell^\infty(\N,A);\; \lim_{n\to\omega}\|x_n\|=0\}.\]
We treat $A$ as a subalgebra of $A^\infty$ and $A^\omega$ as usual. 

We denote by $\M_n(\C)$ the $n$ by $n$ matrix algebra. 
We denote by $\K(H)$ the set of compact operators on a Hilbert space $H$, 
and denote $\K=\K(\ell^2)$. 
We denote $\T=\{z\in \C;\; |z|=1\}$. 
We use the capital Greek letters $\Pi$, $\Sigma$, etc. for representations of C$^*$-algebras, and reserve $\pi$, $\sigma$, etc. 
for representations of compact groups. 
\subsection{Properly outer endomorphisms}
\begin{definition} Let $A$ be a C$^*$-algebra. We say that $\rho\in \End(A)$ is \textit{properly outer} if the following holds:  
for any $a\in A$ and non-zero hereditary C$^*$-subalgebra $C$ of $A$,   
\[\inf\{\|ca\rho(c)\|;\; c\in C_+,\; \|c\|=1\}=0.\]
\end{definition}

Kishimoto \cite[Lemma 1.1]{K81} showed that if $A$ is simple, any outer automorphism of $A$ is properly outer. 
It was observed in \cite[p.322]{BEK93} that the same argument shows 

\begin{lemma} \label{containment} 
Let $A$ be a separable simple C$^*$-algebra, and assume that $\rho\in \End(A)$ is not properly outer. 
Then for any irreducible representation $\Pi$ of $A$, the composition $\Pi\circ \rho$ contains $\Pi$ as a subrepresentation.  
\end{lemma}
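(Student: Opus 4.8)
The plan is to prove the statement by contradiction in a purely representation-theoretic form: I fix $\Pi$ and show that the failure of $\Pi\circ\rho$ to contain $\Pi$ is incompatible with the witness to non-proper-outerness. First I would unpack the hypothesis. Since $\rho$ is not properly outer, there are $a\in A$, a non-zero hereditary C$^*$-subalgebra $C\subseteq A$, and $\delta>0$ with $\|ca\rho(c)\|\ge\delta$ for every $c\in C_+$ of norm one. Fixing an irreducible representation $\Pi$ of $A$ on $H$, I use that $A$ is simple, so $\Pi$ is faithful and hence isometric; writing $\sigma:=\Pi\circ\rho$ on the same space $H$ and noting $\Pi(\rho(c))=\sigma(c)$, the estimate becomes $\|\Pi(c)\Pi(a)\sigma(c)\|\ge\delta$ for all unit $c\in C_+$.

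Next I would reduce the conclusion to a disjointness statement. Because $\Pi$ is irreducible, $\Hom(\Pi,\sigma)\neq 0$ is equivalent to $\Pi$ being a subrepresentation of $\sigma$ (any nonzero intertwiner $T$ has $T^*T\in\Pi(A)'=\C$, so a scalar multiple of $T$ is an isometric intertwiner). Hence it suffices to rule out that $\Pi$ and $\sigma$ are disjoint. So I assume for contradiction that they are disjoint. Then their central supports in $A^{**}$ are orthogonal, equivalently $(\Pi\oplus\sigma)(A)''=\Pi(A)''\oplus\sigma(A)''=B(H)\oplus\sigma(A)''$. Since $A$ is simple, $C$ generates $A$ as an ideal, so $\Pi|_C$ is still irreducible on $\overline{\Pi(C)H}$ (using $CAC\subseteq C$) and $\sigma|_C$ stays disjoint from it; consequently, as $c$ ranges over $C$, the pair $(\Pi(c),\sigma(c))$ can be prescribed independently in the strong-$*$ topology on finite sets of vectors, via Kaplansky density applied in the joint representation.

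The heart of the argument, which is exactly Kishimoto's selection from \cite[Lemma 1.1]{K81}, is then to produce for each $\epsilon>0$ a unit $c\in C_+$ with $\|\Pi(c)\Pi(a)\sigma(c)\|<\epsilon$. Since this step refers only to the single pair of representations $(\Pi,\sigma)$, it transfers verbatim from automorphisms to the present endomorphism $\rho$. Concretely, I would choose, by excision of a pure state supported on $C$, a positive element of norm one whose image under $\sigma$ almost annihilates the finitely many vectors $\Pi(a^*)f_j$ attached to a localizing finite-rank piece of $\Pi(c)$, so that $\Pi(a)$ carries the range of $\sigma(c)$ almost orthogonally to the range of $\Pi(c)$; the disjointness of $\Pi$ and $\sigma$ is precisely what makes these two requirements compatible for one and the same $c$. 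Taking $\epsilon<\delta$ and invoking faithfulness of $\Pi$ gives $\|ca\rho(c)\|<\delta$, contradicting the lower bound from the first step. Therefore $\Pi$ and $\sigma$ are not disjoint, i.e. $\Pi\le\Pi\circ\rho$.

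The main obstacle is the selection step of the third paragraph. Disjointness only furnishes independence of $\Pi(c)$ and $\sigma(c)$ in the weak and strong operator topologies, whereas the quantity to be made small is the operator \emph{norm} of a triple product; moreover a weak limit of $\Pi(c)\Pi(a)\sigma(c)$ over an approximate unit of $C$ can vanish while each term has norm $\ge\delta$, so no soft compactness argument suffices. Bridging this gap, by arranging a single positive norm-one $c$ inside the hereditary subalgebra $C$ that simultaneously controls $\Pi(c)$ and the action of $\sigma(c)$ on the finitely many relevant vectors, is the entire technical content, and is where Kishimoto's excision estimate does the work.
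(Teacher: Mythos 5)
Your proposal is correct and follows essentially the same route as the paper, which gives no independent proof but simply invokes the observation of \cite[p.322]{BEK93} that Kishimoto's argument from \cite[Lemma 1.1]{K81} carries over: reduce to ruling out disjointness of $\Pi$ and $\Pi\circ\rho$, realize a pure vector state of $\Pi$ living on the hereditary subalgebra $C$, and use excision together with the block-diagonal structure $(\Pi\oplus\sigma)(A)''=B(H)\oplus\sigma(A)''$ to manufacture a norm-one $c\in C_+$ violating the lower bound $\delta$. Your reduction of the triple-product norm to the single vector $\Pi(a^*)\xi$ via $\|ca\rho(c)\|^2=\|ca\rho(c^2)a^*c\|\approx\varphi(a\rho(c^2)a^*)$ is exactly where the excision estimate enters, and your remark that the selection step is insensitive to whether $\sigma=\Pi\circ\rho$ is irreducible is the point that makes the automorphism argument transfer to endomorphisms.
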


The following theorem of Bratteli-Elliott-Kishimoto is one of the main technical results 
in their analysis of quasi-product actions of compact groups.  

\begin{theorem}[{\cite[Theorem 3.1]{BEK93}}] \label{BEK} 
Let $A\supset B$ be an inclusion of separable C$^*$-algebras. 
Then the following conditions are equivalent. 
\begin{itemize}
\item[$(1)$] For any $x,y\in A$, 
\[\sup_{b\in B_1}\|xby\|=\|x\|\|y\|.\]
\item[$(2)$] There exists $\delta>0$ such that for any $x,y\in A$, 
\[\sup_{b\in B_1}\|xby\|\geq \delta \|x\|\|y\|.\]
\item[$(3)$] There exists an faithful irreducible representation $\Pi$ of $A$ whose restriction to $B$ is irreducible. 
\end{itemize}
\end{theorem}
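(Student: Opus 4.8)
The plan is to establish the cycle $(1)\Rightarrow(2)\Rightarrow(3)\Rightarrow(1)$. The implication $(1)\Rightarrow(2)$ is trivial, taking $\delta=1$, so all the content lies in the other two, and I expect $(2)\Rightarrow(3)$ to be the main obstacle.

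For $(3)\Rightarrow(1)$ I would invoke Kadison's transitivity theorem. Let $\Pi$ act on a Hilbert space $H$, fix $x,y\in A$ (both nonzero) and a small $\varepsilon>0$, and choose unit vectors $\xi,\eta\in H$ with $\|\Pi(x)\xi\|>\|x\|-\varepsilon$ and $\|\Pi(y)\eta\|>\|y\|-\varepsilon$, which is possible since $\Pi$ is faithful and hence isometric. Put $\psi=\Pi(y)\eta/\|\Pi(y)\eta\|$. Because $\Pi|_B$ is irreducible, transitivity provides $b\in B$ with $\Pi(b)\psi=\xi$ and $\|b\|\le 1$, and then $\Pi(x)\Pi(b)\Pi(y)\eta=\|\Pi(y)\eta\|\,\Pi(x)\xi$ has norm exceeding $(\|x\|-\varepsilon)(\|y\|-\varepsilon)$. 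As $\Pi$ is isometric, $\sup_{b\in B_1}\|xby\|=\sup_{b\in B_1}\|\Pi(x)\Pi(b)\Pi(y)\|\ge(\|x\|-\varepsilon)(\|y\|-\varepsilon)$; letting $\varepsilon\to0$ and using the obvious reverse inequality yields $(1)$.

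The substance is $(2)\Rightarrow(3)$. A first observation is that $(2)$ forces $A$ to be prime: if $A$ had nonzero ideals $I,J$ with $IJ=0$, then for $x\in I$ and $y\in J$ every $xby$ would lie in $IJ=0$, contradicting the lower bound. Since $A$ is separable, primeness gives primitivity, so faithful irreducible representations exist; the task is to find one whose restriction to $B$ is again irreducible. I would aim to construct a pure state $\varphi$ of $A$ with three properties: (a) $\varphi$ is excised by a sequence of norm-one positive elements lying \emph{inside} $B$, so that this same sequence excises $\varphi|_B$ within $B$ and both $\varphi$ and $\varphi|_B$ are pure; (b) the GNS vector $\xi_\varphi$ is cyclic for $\Pi_\varphi(B)$, which together with (a) makes $\Pi_\varphi|_B$ irreducible on all of $H_\varphi$; and (c) $\Pi_\varphi$ is faithful. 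Such a $\varphi$ would be built by a Glimm-type induction along a countable dense subset $(a_k)$ of $A$, at each stage using the uniform constant $\delta$ of $(2)$, applied to the finitely many elements in play, to select contractions $b\in B$ and localizing positive elements of $B$ that advance the construction while retaining a definite fraction of the available norm; faithfulness (c) is secured in parallel by forcing $\|\Pi_\varphi(a_k)\|$ close to $\|a_k\|$, which primeness permits.

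The main obstacle is property (b), together with the correct deployment of $(2)$. The delicate point is that $(2)$ controls the \emph{universal} norm $\|xby\|$, a supremum over all representations, whereas irreducibility of $\Pi_\varphi|_B$ is a statement \emph{internal} to the single representation $\Pi_\varphi$: knowing $\sup_{b\in B_1}\|xby\|\ge\delta\|x\|\|y\|$ does not by itself make $\Pi_\varphi(x)\Pi_\varphi(b)\Pi_\varphi(y)$ large. Consequently $(2)$ cannot be applied after $\varphi$ has been fixed; it must be interleaved with the excision so that $\Pi_\varphi$ is built greedily to realize the universal lower bound internally, ruling out a nonzero vector orthogonal to $\overline{\Pi_\varphi(B)\xi_\varphi}$. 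That a single positive $\delta$ can thereby be bootstrapped to the optimal constant $1$ of $(1)$, via the representation-theoretic characterization $(3)$, is the real content of the theorem.
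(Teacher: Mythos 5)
The paper does not prove this result; it imports it verbatim as \cite[Theorem 3.1]{BEK93}, so there is no internal proof to compare against. Judged on its own terms, your treatment of $(1)\Rightarrow(2)$ and $(3)\Rightarrow(1)$ is correct: the first is trivial, and the second is the standard Kadison transitivity argument. The only point you should make explicit there is that when $B$ is non-unital you need the norm-controlled form of transitivity (or a unitary in the unitization $\widetilde{B}$ cut down by an approximate unit of $B$) to produce an element of $B_1$ itself rather than of the unit ball of $\widetilde{B}$; this costs only an $\varepsilon$ and does not affect the conclusion.

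The implication $(2)\Rightarrow(3)$, however, is not proved. What you give is an accurate description of what a proof must accomplish --- a pure state $\varphi$ of $A$ excised by norm-one positive elements of $B$, with $\varphi|_B$ pure and $\xi_\varphi$ cyclic for $\Pi_\varphi(B)$, plus faithfulness of $\Pi_\varphi$ --- together with a correct diagnosis of why this is delicate: condition $(2)$ bounds the universal norm of $xby$, while cyclicity is internal to a single representation. But the inductive step, which is the entire content of the theorem, is absent. You do not say how, given the data at stage $k$ (a localizing positive element $e_k\in B$, the finitely many $a_1,\dots,a_k$ in play, and the constant $\delta$), one produces $e_{k+1}\leq e_k$ in $B_+$ and the contractions in $B$ so that (i) the limiting state is pure on $A$, (ii) its restriction to $B$ is pure with the \emph{same} GNS space, and (iii) only a fraction of norm controlled by $\delta$ is lost at each stage, so that something nonzero survives in the limit. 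Phrases such as ``retaining a definite fraction of the available norm'' and ``built greedily to realize the universal lower bound internally'' name the goal without supplying the mechanism; in particular, it is exactly here that one must exclude the possibility that $\overline{\Pi_\varphi(B)\xi_\varphi}$ is a proper subspace of $H_\varphi$, and nothing in the sketch does so. As written, the hard implication is a plan rather than a proof; you would need to carry out the excision induction of \cite[Theorem 3.1]{BEK93} in detail, or find a genuine substitute for it.
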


\begin{definition}
We call the condition of $A\supset B$ in Theorem \ref{BEK} the property (BEK).
\end{definition} 

\begin{remark} The property (BEK) forces the inclusion $A\supset B$ to be irreducible. 
\end{remark}

The following lemma is shown in \cite[p.322]{BEK93} when there exists $n\geq 2$ such that $\rho^n$ is reducible. 
For the sake of completeness, we give a proof in full generality.  

\begin{lemma}\label{BEKPO} 
Let $A$ be a separable simple C$^*$-algebra and let $\rho\in \End(A)$. 
We assume that the image of $\rho$ is a proper irreducible subalgebra of $A$ and the inclusion $A\supset \rho(A)$ has the property (BEK). 
Then $\rho$ is properly outer.  
\end{lemma}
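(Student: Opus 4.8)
The plan is to argue by contradiction: assume $\rho$ is not properly outer and derive a contradiction with the properness of $\rho(A)$. First I would use property (BEK) in the form of condition $(3)$ of Theorem \ref{BEK} to choose a faithful irreducible representation $\Pi$ of $A$ whose restriction to $\rho(A)$ is irreducible. Since the image of $\Pi\circ\rho$ is exactly $\Pi(\rho(A))$, which acts irreducibly, $\Pi\circ\rho$ is itself an irreducible representation of $A$, and it is faithful because $\Pi$ is faithful and $\rho$ is injective ($A$ being simple and $\rho\neq 0$). Now Lemma \ref{containment}, applied to the irreducible representation $\Pi$, shows that $\Pi\circ\rho$ contains $\Pi$ as a subrepresentation; as $\Pi\circ\rho$ is irreducible, this forces $\Pi\circ\rho\cong\Pi$. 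Hence there is a unitary $U$ on the representation space with $\Pi\circ\rho=\Ad U\circ\Pi$, that is, $\Pi(\rho(a))=U\Pi(a)U^*$ for all $a\in A$.

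Iterating gives $\Pi\circ\rho^n=\Ad U^n\circ\Pi\cong\Pi$ for every $n$, so each $\Pi|_{\rho^n(A)}$ is irreducible; since $\Pi$ is faithful, every relative commutant of $\rho^n(A)$ collapses, so no power $\rho^n$ is reducible. In particular, if some $\rho^n$ were reducible we would already be done: this recovers the case treated in \cite{BEK93}. The substance of the lemma is therefore the remaining case, in which all powers $\rho^n$ stay irreducible and no reducibility is available to exploit.

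To finish I would return to the quantitative meaning of non-proper-outerness: there are $a\in A$, a nonzero hereditary subalgebra $C\subseteq A$, and $\delta>0$ with $\|c\,a\,\rho(c)\|\ge\delta$ for every $c\in C_+$ with $\|c\|=1$. The goal is to contradict this by exhibiting a single such $c$ for which $c\,a\,\rho(c)$ is small. Working inside $\Pi$, the relation $\Pi(\rho(c))=U\Pi(c)U^*$ turns $\rho(c)$ into the $\Ad U$-translate of $c$, so that $\Pi(c)\Pi(a)\Pi(\rho(c))=\Pi(c)\,\bigl(\Pi(a)U\bigr)\,\Pi(c)\,U^*$. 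Using Kadison transitivity for $\Pi$ (and excision of a pure state supported by $C$) I would place the support of $\Pi(c)$ nearly orthogonal to that of its $\Ad U$-translate, spreading $c$ over a large almost-orthogonal family in $C$ and invoking property (BEK) (condition $(1)$) to keep the relevant norms full; the cross terms governing $\|c\,a\,\rho(c)\|$ then become arbitrarily small, and passing to the ultrapower $A^\omega$ makes the required orthogonality exact, contradicting the lower bound $\delta$.

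The hard part will be exactly this last coupling. The two occurrences of $c$ are linked through $\rho$ (equivalently through $\Ad U$), so one cannot choose $c$ and $\rho(c)$ independently, as one does for the two sides $c$ and $\alpha(c)$ in Kishimoto's automorphism argument \cite{K81}. Reconciling the two constraints — localizing $c$ inside the prescribed hereditary algebra $C$ while simultaneously forcing $\rho(c)$ to be spread out against it — is the genuinely endomorphism-specific difficulty first met in \cite{BEK93}, and property (BEK) is precisely the hypothesis that supplies the room needed to carry out the spreading.
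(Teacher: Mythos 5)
Your first half matches the paper exactly: using condition (3) of Theorem \ref{BEK} to get a faithful irreducible $\Pi$ with $\Pi|_{\rho(A)}$ irreducible, invoking Lemma \ref{containment} to conclude $\Pi\circ\rho\cong\Pi$ via a unitary $u$ with $\Pi\circ\rho=\Ad u\circ\Pi$, and observing that consequently every power $\rho^n$ must be irreducible (disposing of the case already treated in \cite{BEK93}). Up to this point the argument is correct and is the paper's own.

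The second half, however, is a genuine gap. Your plan --- spread $c$ over a large almost-orthogonal family inside $C$ so that the cross terms in $\|ca\rho(c)\|$ vanish --- is precisely the Kishimoto-style decoupling that, as you yourself note, is obstructed by the fact that $c$ and $\rho(c)=$ ($\Ad u$-translate of $c$) cannot be chosen independently; you do not explain how property (BEK) actually overcomes this, and no amount of ultrapower passage resolves it by itself. The paper does not attempt this route at all. Instead, starting from the positivity estimate $\Pi(c^*)(\Pi(a)u+u^*\Pi(a^*))\Pi(c)\geq\delta\Pi(c^*c)$ extracted from the proof of \cite[Lemma 1.1]{K81}, it forms the strictly increasing tower $A_n=u^{*n}\Pi(A)u^n$, lets $B$ be the closure of its union, and shows that the restriction $\theta=\Ad u|_B$ has all powers outer --- the key input here being that $\rho^n$ cannot be an inner automorphism because $\rho(A)\subsetneqq A$, which is exactly where the properness hypothesis enters. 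Outerness of all powers makes $B\rtimes_\theta\Z\cong C^*(B\cup\{u\})$ simple, so there is a conditional expectation $E:C^*(B\cup\{u\})\to B$ with $E(u)=0$; applying $E$ to the displayed inequality yields $0\geq\delta\Pi(c^*c)$, a contradiction. This crossed-product/conditional-expectation mechanism is the missing idea: it sidesteps the coupling problem entirely rather than solving it, and without it (or a worked-out substitute) your proof does not close.
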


\begin{proof} Assume on the contrary that $\rho$ is not properly outer. 
Since $A\supset \rho(A)$ has the property (BEK), there exists an irreducible representation $(\Pi,H)$ of $A$ whose 
restriction to $\rho(A)$ is irreducible, which means that $\Pi\circ \rho$ is irreducible. 
Since $\rho$ is not properly outer, the composition $\Pi\circ \rho$ contains $\Pi$, and so $\Pi\circ \rho$ is 
unitarily equivalent to $\Pi$, and there exits a unitary $u\in B(H)$, unique up to scalar multiple, 
satisfying $\Pi\circ \rho=\Ad u\circ \Pi$. 
If there exists $n\geq 2$ such that $\rho^n$ is reducible, on one hand $\Pi\circ \rho^n$ is unitarily equivalent 
to $\Pi$ and on the other hand it is reducible, which is a contradiction. 
Thus $\rho^n$ is irreducible for all $n\in \N$. 

Proof of \cite[Lemma 1.1]{K81} shows that there exist $a\in A\setminus \{0\}$, $\delta>0$, and a non-zero hereditary 
C$^*$-subalgebra $C\subset A$ such that for any $c\in C$, 
\begin{equation}\label{Kis1}
\Pi(c^*)(\Pi(a)u+u^*\Pi(a^*))\Pi(c)\geq \delta \Pi(c^*c).
\end{equation}
For non-negative integers $n$, we set $A_n=u^{*n}\Pi(A)u^n$. 
Then $\{A_n\}_{n=0}^\infty$ is a strictly increasing sequence of simple C$^*$-algebras. 
Let $B$ be the norm closure of its union, which is simple. 
We introduce $\theta\in \Aut(B)$ by the restriction of $\Ad u$ to $B$. 
By construction, we have $\theta(A_n)=A_{n-1}$ and $\theta(\Pi(x))=\Pi(\rho(x))$ for all $x\in A$. 

We claim that $\theta^n$ is outer for all $n\in \N$. 
Assume on the contrary that there exists $n\in \N$ with $\theta^n$ inner. 
Then there exists $v\in U(B)$ satisfying $\theta^n=\Ad v$. 
Since $\theta^n(\Pi(x))=\Ad u^n(\Pi(x))$, we see that $v^*u^n\in \Pi(A)'=\C$, and 
there exists $c\in \T$ satisfying $v=cu^n$. 
For any $\varepsilon>0$, there exist $m\in \N$ and $w\in U(A_m)$ satisfying $\|v-w\|<\varepsilon$. 
Then $\Ad u^m(w)\in \Pi(A)$ and 
\[\|cu^n-\Ad u^m(w)\|=\|\Ad u^m(cu^n-w)\|<\varepsilon.\] 
Since $\varepsilon>0$ is arbitrary, this means $u^n \in \Pi(A)$, and $\rho^n$ is an inner automorphism of $A$, 
which contradicts $A\supsetneqq  \rho(A)$. 
Thus the claim is shown.     

By the claim, the crossed product $B\rtimes_\theta\Z$ is simple, and it is canonically isomorphic to 
the C$^*$-algebra $C^*(B\cup\{u\})$ generated by $B$ and $u$. 
Thus there exists a conditional expectation $E:C^*(B\cup\{u\})\to B$ satisfying $E(u)=0$. 
This contradicts Eq.(\ref{Kis1}), and we get the statement. 
\end{proof}

\subsection{Finite index inclusions of simple C$^*$-algebras} 
Our basic reference for inclusions of simple C$^*$-algebras is \cite{I02}. 

Let $A\supset B$ be an inclusion of C$^*$-algebras with a conditional expectation $E:A\to B$. 
Pimsner-Popa \cite{PP86} defined the index of $E$, denoted by  $\Ind E$, 
by the best constant $\lambda \geq 1$ such that the map $E-\lambda^{-1}\id:A\to A$ is positive 
(note that \cite[Definition 2.1]{I02} contains typographic errors). 
When there is no such a constant $\lambda$, we set $\Ind E=\infty$. 
When $B$ has no non-zero finite dimensional representation, we can replace positivity by complete positivity 
in the definition (see \cite[Lemma 2.2]{I02}).  

Watatani \cite{Wat90} introduced an alternative definition of the index of $E$ in terms of a quasi-basis, 
a generalization of the Pimsner-Popa basis. 
His definition is particularly well-behaved when we discuss the C$^*$-basic construction, which we will do now. 
The two definitions coincide when $A$ and $B$ are infinite dimensional simple C$^*$-algebras 
\cite[Theorem 3.2, Corollary 3.7]{I02}, which we always assume in what follows.

For a given conditional expectation $E:A\to B$ of finite index, we can introduce a Hilbert $B$-module $\cE_E$ as follows. 
We let $\cE_E=A$ as a right-$B$ module, and introduce $B$-valued inner product by $\inpr{x}{y}_E=E(x^*y)$. 
The Pimsner-Popa inequality assures that $\cE_E$ is already complete. 
When $x\in A$ is regarded as an element in $\cE_E$, we often denote $\eta_E(x)$ to avoid possible confusion.  
We denote by $\mathbb{L}_B(\cE_E)$ the C$^*$-algebra of adjointable $B$-module maps on $\cE_E$. 
We regard $A$ as a C$^*$-subalgebra of $\mathbb{L}_B(\cE_E)$ through the left multiplication. 
The Jones projection $e_E\in \mathbb{L}_B(\cE_E)$ is defined by $e_E\eta_E(x)=\eta_E(E(x))$ for $x\in A$, 
which belongs to the commutant of $B$.  
Then we have the relation $e_Exe_E=E(x)e_E$ for all $x\in A$.  
The C$^*$-basic construction of $A\supset B$ is the norm closure of the linear span of $Ae_EA$, which coincides with 
the set of ``compact operators" $\K_B(\cE_E)$.  
Under the assumption of $\Ind E<\infty$ and the simplicity of $B$, we always have $\K_B(\cE_E)\supset A$ 
and there exists a conditional expectation $E_1:\K_B(\cE_E)\to A$ given by $E_1(e_E)=1/\Ind E$, which is called the dual 
conditional expectation of $E$ (see\cite[Corollary 3.4]{I02}). 
We have $\Ind E_1=\Ind E$. 

When there exists a conditional expectation $E:A\to B$ of finite index, 
there exists a unique conditional expectation $E_0:A\to B$ satisfying $\Ind E_0\leq \Ind E$ for all 
faithful $E$ (see \cite{H88}, \cite{Wat90}). 
We call $E_0$ the minimal conditional expectation of $A\supset B$. 
We denote $[A:B]_0=\Ind E_0$ and call it the minimum index of $A\supset B$. 
If moreover $A\supset B$ is irreducible, there is only one conditional expectation. 

The following result is a consequence of the second dual approach developed in \cite{I02}, 
which will be used in one of our main technical results (see \cite[Theorem 3.3]{I02}). 

\begin{lemma}\label{intermediate} 
Let $A\supset B$ be an irreducible inclusion of simple C$^*$-algebras with 
a conditional expectation $E:A\to B$ of finite index. 
Then every intermediate C$^*$-subalgebra between $A$ and $B$ is simple, that is, the inclusion $A\supset B$ is 
C$^*$-irreducible in the sense of R{\o}rdam \cite[Definition 3.1]{R23}. 
\end{lemma}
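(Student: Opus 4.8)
The plan is to fix an intermediate C$^*$-subalgebra $C$ with $A\supseteq C\supseteq B$ and show that $C$ is simple; C$^*$-irreducibility in R\o rdam's sense is then immediate. Two structural features of $C$ come for free. First, the inclusion $B\subseteq C$ is irreducible: since $C\subseteq A\subseteq M(A)$, any element of $C\cap B'$ lies in $M(A)\cap B'=\C$, so $C\cap B'=\C$. Second, $E$ restricts to a conditional expectation $E|_C\colon C\to B$ of finite index: $E|_C$ is again idempotent, positive, and a $B$-bimodule map onto $B$, and the Pimsner-Popa inequality $E(x)\geq (\Ind E)^{-1}x$ for $x\in A_+$ restricts to $x\in C_+$, giving $\Ind(E|_C)\leq \Ind E<\infty$. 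Thus $B\subseteq C$ is an irreducible finite-index inclusion with $B$ simple.

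I would then pass to the enveloping von Neumann algebras and deduce simplicity of $C$ from factoriality of $C^{**}$. The bidual $(E|_C)^{**}\colon C^{**}\to B^{**}$ is a normal conditional expectation, and the Pimsner-Popa inequality survives by normality, so $B^{**}\subseteq C^{**}$ again has finite index. The key input is the second dual approach of \cite{I02} (see \cite[Theorem 3.3]{I02}): for a finite-index inclusion with $B$ simple it computes $C^{**}\cap (B^{**})'$ from the C$^*$-algebraic data, and in the irreducible case this yields $C^{**}\cap (B^{**})'=C\cap B'=\C$. Since $Z(C^{**})$ commutes with $C^{**}\supseteq B^{**}$, we get $Z(C^{**})\subseteq C^{**}\cap (B^{**})'=\C$, so $C^{**}$ is a factor. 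Finally, the closed two-sided ideals of $C$ correspond bijectively to the central projections of $C^{**}$ (a closed ideal $J$ to the unit of its weak$^*$-closure $\overline{J}^{\,w^*}$), and a factor admits only $0$ and $1$; hence $C$ has no non-trivial closed ideal and is simple.

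The technical heart, and the step I expect to be the main obstacle, is controlling $C^{**}\cap (B^{**})'$, that is, showing that irreducibility is not destroyed on passing to the bidual. Finiteness of the index is indispensable here and is coupled with irreducibility in an essential way: for the non-irreducible diagonal inclusion $B\subseteq B\oplus B$ the relative commutant jumps from $\C\oplus\C$ to $Z(B^{**})\oplus Z(B^{**})$ on taking biduals, and correspondingly $B\oplus B$ fails to be simple. The whole point is therefore that an irreducible finite-index inclusion stays irreducible after completion, converting the C$^*$-algebraic simplicity of $C$ into the von Neumann algebraic factoriality of $C^{**}$; carrying this out through the quasi-basis and basic-construction computations is exactly what the second dual approach of \cite{I02} accomplishes.
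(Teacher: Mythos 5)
The paper offers no proof of this lemma beyond the citation of \cite[Theorem 3.3]{I02}, so the real question is whether your argument stands on its own; it does not, because the central step is false. You claim that irreducibility survives the passage to biduals, i.e.\ that $C^{**}\cap(B^{**})'=C\cap B'=\C$, and hence that $C^{**}$ is a factor. Identify $B^{**}$ with the weak closure of $B$ in $C^{**}$ (legitimate, since every state of $B$ extends to $C$ through $E$); then, because the commutant of a set equals the commutant of its weak closure, $C^{**}\cap(B^{**})'=C^{**}\cap B'\supseteq Z(B^{**})$. Whenever $B$ has two inequivalent irreducible representations --- true for every non-elementary simple C$^*$-algebra, hence for every algebra appearing in this paper --- their central supports are orthogonal non-zero projections in $Z(B^{**})$, so $Z(B^{**})$ is a huge abelian von Neumann algebra and $C^{**}\cap(B^{**})'$ is nowhere near $\C$, no matter how small the index is. In particular $C^{**}$ is essentially never a factor, not even when $C=B$ is simple, so factoriality of the bidual cannot be the mechanism producing simplicity; your own sanity check with $B\subseteq B\oplus B$ already identifies $Z(B^{**})$ as the obstruction, and irreducibility plus finite index do not remove it. A secondary error compounds this: closed ideals of $C$ correspond only \emph{injectively} to central projections of $C^{**}$ (only the open central projections arise as supports of ideals), which is exactly how a simple C$^*$-algebra manages to have a bidual with enormous center.

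The preparatory steps you do correctly (triviality of $C\cap B'$, and the fact that $E|_C$ is a conditional expectation with finite Pimsner--Popa index) are where the easy part ends. What the second dual approach of \cite{I02} actually does is localize rather than globalize: one fixes a faithful irreducible representation $(\Pi,H)$ of $A$, where $\Pi(A)''=\B(H)$ genuinely is a factor; finiteness of the index forces $\Pi(B)''$ to be a finite direct sum of type I factors with finite-dimensional relative commutant $\Pi(B)'\cap\B(H)$, so the intermediate von Neumann algebra $\Pi(C)''$ has finite-dimensional center. Converting that into simplicity of the C$^*$-algebra $C$ still requires genuine work --- producing a conditional expectation onto $C$, or a quasi-basis for $C$ over $B$; note that you cannot invoke the equivalence of the Pimsner--Popa and Watatani indices for $C\supset B$ before knowing $C$ is simple, which is the chicken-and-egg problem that \cite[Theorem 3.3]{I02} resolves. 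As written, your proof does not close this gap.
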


Next we give a brief account of sector theory for C$^*$-algebras developed in \cite[Section 4]{I02}. 
For two simple $C^*$-algebras $A$ and $B$, we denote by $\Mor(B,A)_0$ the set of homomorphisms from $B$ into $A$ 
whose image has a finite index. 
When $A$ and $B$ are non-unital, every $\rho\in \Mor(B,A)_0$ uniquely extends to a strictly continuous homomorphism 
from $M(B)$ to $M(A)$, which we denote by the same symbol $\rho$.  
For $\rho\in \Mor(B,A)_0$, we denote by $E_\rho$ the minimal conditional expectation $E_\rho:A\to \rho(B)$, and denote 
$d(\rho)=[A:\rho(B)]_0^{1/2}$, which is called the statistical dimension of $\rho$. 
We say that two homomorphisms $\rho, \sigma\in \Mor(B,A)$ are equivalent if there exists a unitary $u\in M(A)$ satisfying 
$\rho=\Ad u\circ \sigma$. 

For $\rho,\sigma\in \Mor(B,A)_0$, we denote by $(\rho,\sigma)$ the intertwiner space 
$$(\rho,\sigma)=\{v\in M(A);\; \forall x\in B,\; v\rho(x)=\sigma(x)v\},$$
which is always finite dimensional. 
When $\rho$ is irreducible, it is a Hilbert space with $\inpr{v}{w}=w^*v$. 

We mainly work on the following two classes of C$^*$-algebras: 
\begin{itemize}
\item $\cC_1$: the class of simple stable $\sigma$-unital C$^*$-algebras.
\item $\cC_2$: the class of unital purely infinite simple C$^*$-algebras in the Cuntz standard form, 
that is $[1_A]_0=0$ in $K_0(A)$. 
\end{itemize}

Assume that $A$ and $B$ are C$^*$-algebras belonging to either $\cC_1$ or $\cC_2$. 
Then every $\rho\in \Mor(B,A)_0$ has its conjugate $\brho\in \Mor(A,B)_0$, uniquely determined up to equivalence  
and characterized by the following property: there exist isometries $R_\rho\in (\id_B,\brho\rho)$ and 
$\bR_\rho \in (\id_A,\rho\brho)$ satisfying 
\[{\bR_\rho}^*\rho(R_\rho)=\frac{1}{d(\rho)},\quad {R_\rho}^*\brho(\bR_\rho)=\frac{1}{d(\rho)}. \]
The inclusion $B\supset \brho(A)$ is isomorphic to the dual inclusion $\K_{\rho(B)}(\cE_{E_\rho})\supset A$ of $A\supset \rho(B)$ 
(see \cite[Lemma 4.4]{I02}). 

For $\rho,\sigma\in \Mor(B,A)_0$, their direct sum $\rho\oplus \sigma\in \Mor(B,A)_0$, which is uniquely determined up to 
unitary equivalence, is defined as follows: 
we choose two isometries $s_1,s_2\in M(A)$ satisfying the Cuntz algebra $\cO_2$ relation $s_1s_1^*+s_2s_2^*=1$ and let 
\[(\rho\oplus \sigma)(x)=s_1\rho(x)s_1^*+s_2\sigma(x)s_2^*.\] 
With these operations together with composition as monoidal product,  
the two sets $\End(A)_0$ and $\End(B)_0$ are rigid C$^*$-tensor categories and $\Mor(A,B)_0$ and $\Mor(B,A)_0$ are 
their module categories. 
In particular, the Frobenius-reciprocity holds (see \cite{I98}).   

One of advantages to work in the classes $\cC_i$, $i=1,2$, is that we have the following crossed product type decomposition. 
Let $A\supset B$ be an irreducible inclusion of simple C$^*$-algebras belonging to either $\cC_1$ or $\cC_2$ 
with a conditional expectation $E:A\to B$ of finite index. 
Let $\iota :B\hookrightarrow A$ be the inclusion map and let 
\[[\biota\iota]=\bigoplus_{\xi\in \Xi}n_\xi[\rho_\xi]\]
be the irreducible decomposition, where $n_\xi$ is the multiplicity of $\rho_\xi\in \End(B)_0$. 
We may and do assume $0\in \Xi$ and $\rho_0=\id_B$. 
Since $\iota$ is irreducible, we have $n_0=1$. 
By the Frobenius reciprocity, 
\[\dim(\iota,\iota\rho_\xi)=\dim(\biota\iota,\rho_\xi)=n_\xi.\]
Let $\{V(\xi)\}_{i=1}^{n_\xi}$ be an orthonormal basis of $(\iota,\iota\rho_\xi)$.  
Then every $x\in A$ is uniquely decomposed as 
\[x=\sum_{\xi\in \Xi}\sum_{i=1}^{n_\xi}x(\xi)_iV(\xi)_i,\]
\[x(\xi)_i=d(\rho_\xi)E(xV(\xi)_i^*)\in B,\]
(see \cite[p.124]{I02}).  
Note that $x(0)_1=E(x)$, and  
\[b(x-E(x))b=\sum_{\xi\in \Xi\setminus\{0\}}\sum_{i=1}^{n_\xi}bx(\xi)_i\rho_\xi(b)V(\xi)_i,\quad b\in B.\] 

For two representations $(\Pi_1,H_1)$, $(\Pi_2,H_2)$ of a C$^*$-algebra $A$, we denote by $\Hom_A(\Pi_1,\Pi_2)$ the set of 
intertwiners 
\[\{T\in \B(H_1,H_2);\;\forall x\in A,\; T\Pi_1(x)=\Pi_2(x)T\}.\]

\begin{lemma}\label{finite}
\begin{itemize}
\item[$(1)$] Let $A\supset B$ be an inclusions of simple C$^*$-algebras of finite index, and let $\Pi$ be 
an irreducible representation of $A$.  
Then the restriction of $\Pi$ to $B$ is a finite direct sum of irreducible representations. 
\item[$(2)$] Let $A$ and $B$ be C$^*$-algebras belonging to $\cC_1$ or $\cC_2$, and let $\rho\in \Mor(B,A)_0$. 
Let $\Phi$ and $\Psi$ be irreducible representations of $A$ and $B$ respectively. 
Then 
\[\dim \Hom_A(\Phi\circ\rho,\Psi)=\dim \Hom_A(\Phi,\Psi\circ \brho).\]
\end{itemize}
\end{lemma}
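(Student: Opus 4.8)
The plan is to treat the two parts separately: part (2) is a direct categorical computation from the conjugate equations, while part (1) requires a genuine von Neumann index argument.

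For (1), let $(\Pi,H)$ be an irreducible representation of $A$ and let $E\colon A\to B$ be a conditional expectation with $\Ind E=\lambda<\infty$, so that $E(x^*x)\ge \lambda^{-1}x^*x$ for all $x\in A$. The key observation is that, since $A$ is simple, $\Pi$ is faithful; hence $\Pi(a)\mapsto \Pi(E(a))$ is a well-defined $\Pi(B)$-bimodular unital completely positive map on $\Pi(A)$ satisfying $\Pi(E(x^*x))\ge \lambda^{-1}\Pi(x^*x)$. I would extend it by normality to a conditional expectation $\overline{E}\colon \B(H)=\Pi(A)''\to \Pi(B)''$, the Pimsner-Popa inequality $\overline{E}(X)\ge \lambda^{-1}X$ (for $X\ge 0$) persisting on the weak closure by Kaplansky density. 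Thus the inclusion of von Neumann algebras $\Pi(B)''\subset \B(H)$ has finite index, and by the standard fact that a finite-index inclusion of von Neumann algebras has finite-dimensional relative commutant, the algebra $\Pi(B)'\cap \B(H)=\Pi(B)'$ is finite dimensional. A representation whose commutant is finite dimensional is a finite direct sum of irreducibles, which is the assertion.

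For (2), I would realize the equality of dimensions through an explicit Frobenius-reciprocity isomorphism built from the conjugate equations. After extending $\Phi$ and $\Psi$ to the multiplier algebras, recall that $\bR_\rho\in(\id_A,\rho\brho)\subset M(A)$ and $R_\rho\in(\id_B,\brho\rho)\subset M(B)$, and set
\[ \alpha(T)=T\,\Phi(\bR_\rho),\qquad \beta(S)=\Psi(R_\rho^*)\,S. \]
A short computation using the intertwining relations $\bR_\rho a=\rho(\brho(a))\bR_\rho$ and $R_\rho x=\brho(\rho(x))R_\rho$ shows that $\alpha$ sends $\Hom_B(\Phi\circ\rho,\Psi)$ into $\Hom_A(\Phi,\Psi\circ\brho)$ and that $\beta$ sends it back. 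Then, combining the intertwining property of $T$ (respectively $S$) with the conjugate equations ${\bR_\rho}^*\rho(R_\rho)=d(\rho)^{-1}$ and ${R_\rho}^*\brho(\bR_\rho)=d(\rho)^{-1}$, I would verify
\[ \beta\circ\alpha=\frac{1}{d(\rho)}\,\id,\qquad \alpha\circ\beta=\frac{1}{d(\rho)}\,\id. \]
Since $d(\rho)\ne 0$, both maps are bijective, so the two intertwiner spaces have equal dimension.

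I expect the only real obstacle to lie in part (1): justifying that $E$ descends to a bona fide normal conditional expectation $\overline{E}$ on the weak closures with the index inequality intact. Faithfulness of $\Pi$ (from simplicity of $A$) removes the well-definedness problem, after which normality makes the extension and the persistence of the Pimsner-Popa bound routine. Part (2) is then purely formal once the maps $\alpha$ and $\beta$ are written down.
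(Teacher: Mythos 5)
The paper itself disposes of this lemma with a one-line citation to \cite[Lemma 5.1, Lemma 5.2]{I02}, so any self-contained argument is welcome; but your part (1) has a genuine gap at the words ``extend it by normality.'' The map $\Pi(a)\mapsto\Pi(E(a))$ is a well-defined ucp map on the C$^*$-algebra $\Pi(A)$, but it is only norm-continuous, and there is no a priori reason for it to be ultraweakly continuous; ultraweak continuity is precisely what is needed to extend it to $\Pi(A)''=\B(H)$, so it cannot be assumed. Kaplansky density propagates the Pimsner--Popa inequality \emph{after} a normal extension exists, but it does not produce one. Note also that the inequality $\omega(E(x^*x))\geq\lambda^{-1}\omega(x^*x)$ for a $\Pi$-normal state $\omega$ shows that $\omega$ is quasi-contained in the GNS representation of $\omega\circ E$, which is the wrong direction for concluding that $\omega\circ E$ is again $\Pi$-normal. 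The existence of a normal conditional expectation $\B(H)\to\Pi(B)''$ is exactly the nontrivial content of the cited result: in \cite{I02} it is extracted from the bitranspose $E^{**}:A^{**}\to B^{**}$ (which \emph{is} automatically normal) by checking that $E^{**}$ is compatible with cutting down by the central cover of $\Pi$, and that compatibility is where simplicity and finiteness of the index really enter. A more elementary repair is available via a Watatani quasi-basis $\{u_i\}_{i=1}^n$ for $E$: the normal positive map $G(T)=\sum_i\Pi(u_i)T\Pi(u_i^*)$ sends $\Pi(B)'$ into $\Pi(A)'=\C 1$ because $\sum_iu_ie_Eu_i^*=1$ in the basic construction, and the identity $x=\sum_iu_iE(u_i^*x)$ gives a uniform lower bound $G(p)\geq\bigl(\sum_i\|u_i\|\bigr)^{-2}$ for every nonzero projection $p\in\Pi(B)'$; hence $\Pi(B)'$ admits only finitely many mutually orthogonal nonzero projections and is finite dimensional, which is your desired conclusion.

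Your part (2) is correct and complete: the maps $T\mapsto T\Phi(\bR_\rho)$ and $S\mapsto\Psi(R_\rho^*)S$ are mutually inverse up to the factor $d(\rho)^{-1}$ by the conjugate equations, and this is the standard Frobenius reciprocity argument that underlies the cited \cite[Lemma 5.2]{I02}. The only point to make explicit is that the intertwining relations must first be extended to the multiplier algebras by strict continuity of $\rho$ and nondegeneracy of the representations, which you acknowledge.
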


\begin{proof} The statements follow from \cite[Lemma 5.1, Lemma 5.2]{I02}. 
\end{proof}

\subsection{Quasi-product actions}
For a compact group $G$, we denote by $\cU(G)$ the category of finite dimensional unitary representations of $G$. 
We choose and fix a transversal $\hG$ of the set of equivalence classes of irreducible unitary representations of $G$. 
For $\pi\in \hG$, we denote $\chi_\pi(g)=\Tr\pi(g)$ and $d(\pi)=\dim \pi$. 
We choose and fix an orthonormal basis $\{\xi(\pi)\}_{i=1}^{d(\pi)}$ of the representation space $H_\pi$ of $\pi$, 
and identify $\pi(g)$ with its matrix representation $(\pi_{ij}(g))_{ij}$. 
For $\pi,\sigma,\mu\in \hG$, we let $N_{\pi,\sigma}^\mu=\dim\Hom_G(\mu, \pi\otimes \sigma)$. 

Let $\alpha$ be an action of $G$ on a C$^*$-algebra $A$. 
For $\pi\in \hG$, we define $P_\pi:A\to A$ by 
\[P_\pi(x)=d(\pi)\int_G\overline{\chi_\pi(g)}\alpha_g(x)dg,\]
where $dg$ is the normalized Haar measure. 
Then we have $P_\pi\circ P_\sigma=\delta_{\pi,\sigma}P_\pi$ for $\pi,\sigma\in \hG$.  
The image of $P_\pi$ is called the spectral subspace corresponding to $\pi\in \hG$, and we denote it by $A^\alpha(\pi)$.  
The Peter-Weyl theorem together with the Hahn-Banach theorem implies that the linear span of $\cup_{\pi\in \hG}A^\alpha(\pi)$ 
is dense in $A$. 
Or alternatively, we can see it more directly from a Fej\'er type approximation (see Theorem \ref{Fejer}). 
Let 
\[A^\alpha_1(\pi)=\{x=(x_1,x_2,\ldots,x_{d(\pi)})\in A^{d(\pi)};\; \alpha_g(x)=x\pi(g)\}.\]
Then $A^\alpha(\pi)\neq \{0\}$ if and only if $A^\alpha_1(\pi)\neq \{0\}$. 

Let $E_\alpha=P_1$, which is a conditional expectation from $A$ to the fixed point algebra $A^\alpha$. 
From the compactness of $G$, we get the following well-known lemma:  

\begin{lemma}\label{appunit}
If $\{u_\lambda\}_{\lambda\in \Lambda}$ is an approximate units of $A$, 
so is $\{E_\alpha(u_\lambda)\}_{\lambda\in \Lambda}$. 
\end{lemma}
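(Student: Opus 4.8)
The plan is to exploit that $E_\alpha$ is a positive contraction together with the compactness of $G$. First I would record the structural facts: since $E_\alpha=P_1$ is a conditional expectation onto $A^\alpha$, it is positive and contractive, so each $E_\alpha(u_\lambda)$ is a positive element of $A^\alpha$ of norm at most $1$; and if $\{u_\lambda\}$ is increasing, then positivity of $E_\alpha$ makes $\{E_\alpha(u_\lambda)\}$ increasing as well. Thus the only thing left to verify is the approximate identity property, namely $\|E_\alpha(u_\lambda)a-a\|\to 0$ and $\|aE_\alpha(u_\lambda)-a\|\to 0$ for every $a\in A$.

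The heart of the argument is a uniformity statement: for fixed $a\in A$, the net $\alpha_g(u_\lambda)a$ converges to $a$ uniformly in $g\in G$. To see this I would rewrite, using that each $\alpha_g$ is an isometric automorphism,
\[\|\alpha_g(u_\lambda)a-a\|=\big\|\alpha_g\big(u_\lambda\alpha_{g^{-1}}(a)-\alpha_{g^{-1}}(a)\big)\big\|=\|u_\lambda\,\alpha_{g^{-1}}(a)-\alpha_{g^{-1}}(a)\|.\]
Hence $\sup_{g\in G}\|\alpha_g(u_\lambda)a-a\|=\sup_{b\in K}\|u_\lambda b-b\|$, where $K=\{\alpha_h(a):h\in G\}$. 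Now $K$ is norm-compact, because the orbit map $h\mapsto \alpha_h(a)$ is norm-continuous (by continuity of the action in the point-norm topology) and $G$ is compact. Since an approximate unit converges uniformly on any norm-compact (equivalently totally bounded) subset — a standard finite-cover argument using $\|u_\lambda\|\le 1$ — the right-hand supremum tends to $0$.

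Finally I would push the uniform estimate through the (normalized) Haar integral: because the vector-valued integral satisfies $\|\int_G f(g)\,dg\|\le\int_G\|f(g)\|\,dg\le\sup_{g\in G}\|f(g)\|$,
\[\|E_\alpha(u_\lambda)a-a\|=\Big\|\int_G\big(\alpha_g(u_\lambda)a-a\big)\,dg\Big\|\le \sup_{g\in G}\|\alpha_g(u_\lambda)a-a\|\longrightarrow 0,\]
and symmetrically for $aE_\alpha(u_\lambda)-a$. The only genuinely delicate point is the uniformity in $g$; once the orbit $K$ is seen to be compact and one invokes uniform convergence of approximate units on compact sets, everything else is routine.
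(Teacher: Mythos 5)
Your proof is correct and is precisely the standard argument the paper leaves implicit (the paper offers no proof, only the remark that the lemma follows from the compactness of $G$): the orbit $\{\alpha_h(a):h\in G\}$ is norm-compact, an approximate unit in the unit ball converges uniformly on norm-compact sets, and the Haar integral preserves the resulting uniform estimate. All three steps, including the reduction $\sup_g\|\alpha_g(u_\lambda)a-a\|=\sup_{b\in K}\|u_\lambda b-b\|$ and the finite-cover argument, are sound.
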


The lemma implies $\overline{A^\alpha A}^{\|\cdot\|}=A$, and we have the inclusion relation $M(A^\alpha)\subset M(A)$. 
We denote the extension of $\alpha_g$ to $M(A)$ by the same symbol $\alpha_g$. 
Then the map $g\mapsto \alpha_g(T)$ is is continuous in the strict topology for all $T\in M(A)$.   
We have $M(A^\alpha)=M(A)^\alpha$ (see, for example, \cite[Lemma 2]{Pel24}). 

Bratteli-Elliott-Kishimoto \cite[Theorem 1]{BEK93} showed that 10 conditions on a compact group action 
are mutually equivalent, which are the defining conditions of a quasi-product action. 
We name some of them under the assumption of Theorem \ref{main1} now. 
Recall that $\alpha$ is said to be minimal if the inclusion $A\supset A^\alpha$ is irreducible, which is a necessary condition 
for $\alpha$ to be quasi-product because of the condition (1) below. 
Note that the crossed product $A\rtimes_\alpha G$ is automatically simple under the assumption of Theorem 1.1 
(see \cite[Proposition A]{Mu24}). 

\begin{theorem}[Quasi-product action]\label{qp1} 
Let $G$ be a second countable compact group, and let $\alpha$ be a faithful minimal action of $G$ 
on a separable C$^*$-algebra $A$ whose fixed point algebra $A^\alpha$ is simple. 
Then the following conditions are equivalent:
\begin{itemize}
\item[$(1)$] The inclusion $A\supset A^\alpha$ has the property (BEK). 
\item[$(2)$] There exists an $\alpha$-invariant pure state of $A$. 
\item[$(3)$] For each $\pi\in \hG$, there exists a sequence $\{y_n\}_{n=1}^\infty$ in $A^\alpha_1(\pi)$ such that  
\begin{itemize}
\item[$(\mathrm{i})$] $\|y_{n1}\|=1$ for all $n\in \N$,  
\item[$(\mathrm{ii})$] $\{y_{n1}\}_{n=1}^\infty$ is a central sequence in $A$, 
\item[$(\mathrm{iii})$] $\limsup_n\|ay_{n1}\|\geq \|a\|/d(\pi)$ for all $a\in A$. 
\end{itemize}
\item[$(4)$] The dual endomorphisms $\halpha_\pi$, $\pi\in \hG\setminus\{1\}$, of the  stabilized crossed product 
$(A\rtimes_\alpha G)\otimes \K$ are properly outer.   
\end{itemize}
\end{theorem}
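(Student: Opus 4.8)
The plan is to read this statement off \cite[Theorem 1]{BEK93}, which establishes the mutual equivalence of ten conditions (the defining conditions of a quasi-product action) for an action of a compact group on a prime C$^*$-algebra, among them property (BEK) for the fixed-point inclusion, the existence of an invariant pure state, a spectral central-sequence condition, and proper outerness of the dual endomorphisms on the stabilized crossed product. The first task is therefore to check that our standing hypotheses place us in that setting: $A$ is simple (hence prime) by \cite[Lemma 24]{WII} and \cite[Theorem 1,(3)]{BEK93}, the action is minimal so $A\supset A^\alpha$ is irreducible, $A^\alpha$ is simple by assumption, $E_\alpha=P_1$ is the canonical $\alpha$-invariant conditional expectation onto $A^\alpha$, and $A\rtimes_\alpha G$ is simple by \cite[Proposition A]{Mu24}. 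Once this is in place, what remains is to identify each of (1)--(4) with the corresponding member of the list in \cite[Theorem 1]{BEK93} and to supply the short bridging arguments, where the formulation of (3) and especially (4) differs from theirs.

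For (1)$\Leftrightarrow$(2), condition (1) is, by Theorem \ref{BEK}(3), exactly the existence of a faithful irreducible representation of $A$ whose restriction to $A^\alpha$ is irreducible, so the content is to match this with the invariant-pure-state condition. I would set up (2)$\Rightarrow$(1) from the GNS triple $(\Pi_\omega,H_\omega,\xi_\omega)$ of an invariant pure state $\omega$, with implementing unitaries $U(g)$ fixing $\xi_\omega$, using the identity $H_\omega^G=\overline{\Pi_\omega(A^\alpha)\xi_\omega}$ (which follows from $P\Pi_\omega(x)\xi_\omega=\Pi_\omega(E_\alpha(x))\xi_\omega$ for $P=\int_G U(g)\,dg$) together with the Galois-type relation $\Pi_\omega(A^\alpha)'=U(G)''$ forced by minimality; compressing to $H_\omega^G$ (on which $U(G)$ acts trivially) then yields the purity of $\omega|_{A^\alpha}$ and, via the \cite{BEK93} analysis, the required representation. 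The reverse implication (1)$\Rightarrow$(2), which builds an invariant pure state as a weak-$*$ cluster point from the central-sequence structure, is the substantive half and I would cite \cite{BEK93} for it.

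For (1)$\Leftrightarrow$(3), condition (3) is the quantitative spectral form, and I would obtain (3)$\Rightarrow$(1) by verifying Theorem \ref{BEK}(2): given $x,y\in A$ one reduces to spectral components and uses the normalized central sequences $\{y_{n1}\}$ of (3)(ii)--(iii), whose near-commutation with $x,y$ and lower bound $\limsup_n\|ay_{n1}\|\ge\|a\|/d(\pi)$ produce contractions $b\in A^\alpha$ making $\|xby\|$ arbitrarily close to $\|x\|\,\|y\|$; the converse (1)$\Rightarrow$(3) is again part of \cite{BEK93}. For (1)$\Leftrightarrow$(4) I would pass to $C=(A\rtimes_\alpha G)\otimes\K$, which lies in the class $\cC_1$, so that the dual endomorphisms $\halpha_\pi$ are irreducible of finite index $d(\pi)^2$; those with $d(\pi)\ge 2$ are proper, for which Lemma \ref{BEKPO} gives the implication property (BEK)$\Rightarrow$proper outerness once property (BEK) of $A\supset A^\alpha$ has been transported across the duality to the inclusions $C\supset\halpha_\pi(C)$, while the automorphism case $d(\pi)=1$ (nontrivial characters) is handled by Kishimoto's theorem through faithfulness of $\alpha$ as in Lemma \ref{containment}. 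The opposite direction (4)$\Rightarrow$(1) is the duality statement of \cite{BEK93}.

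The main obstacle I anticipate is condition (4): correctly setting up, for nonabelian $G$, the duality that identifies property (BEK) for the fixed-point inclusion $A\supset A^\alpha$ with proper outerness of the whole \emph{system} of dual endomorphisms $\{\halpha_\pi\}$ on the stabilized crossed product. This is precisely where the nonabelian case is genuinely harder than the abelian one (where the dual is an honest group action), and it is the reason the stabilization by $\K$ and the finite-index sector machinery of \cite{I02} must enter. A secondary subtlety is the Galois relation $\Pi_\omega(A^\alpha)'=U(G)''$ underlying (2)$\Rightarrow$(1), which is not formal and for which I would lean on the \cite{BEK93} (morally \cite{ILP98}) analysis.
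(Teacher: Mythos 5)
Your proposal matches the paper's treatment: the paper gives no proof of Theorem \ref{qp1} at all, presenting it as a direct naming of four of the ten equivalent conditions in \cite[Theorem 1]{BEK93} under the standing hypotheses (with the reformulation of the dual endomorphisms deferred to Lemma \ref{predual action}), which is exactly the citation-based route you take. Your additional bridging sketches go beyond what the paper records but do not diverge from its approach.
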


\begin{remark} When the above equivalent conditions hold, the C$^*$-algebra $A$ is simple too, which follows from Landstad's 
observation \cite[Lemma 24]{WII} because (1) implies that $A$ is prime. 
We can also use (4) to show that $A\otimes \K$ is simple as in the case of a crossed product by a 
discrete group (see \cite[Theorem 3.1]{K81}). 
\end{remark}

The condition (4) is the most tractable for our purpose. 
On the other hand, the definition of $\halpha_\pi$ in \cite{BEK93} is rather tedious, and we reformulate it, 
following the original formulation by Roberts \cite{Ro75}, 
in the special case where $A^\alpha$ belongs to either $\cC_1$ or $\cC_2$. 

By a Hilbert space $\cH$ in $M(A)$, we mean a finite dimensional subspace of $M(A)$ such that it is a Hilbert space 
with respect to the inner product given by $\inpr{V}{W}=W^*V$ for $V,W\in \cH$. 
For a Hilbert space $\cH$ in $M(A)$, we choose an orthonormal basis $\{\psi(\cH)_i\}_i$ of $\cH$ and define the support 
$s(\cH)$ of $\cH$ by 
$$\sum_i\psi(\cH)_i\psi(\cH)_i^*,$$
which does not depend on the choice of $\{\psi(\cH)_i\}_i$. 
For two Hilbert spaces $\cH_1,\cH_2$ in $M(A)$, we denote by $\cH_1\cH_2$ and $\cH_1\cH_2^*$ the linear spans of 
$\{VW;\;V\in \cH_1,\; W\in \cH_2 \}$ and $\{VW^*;\;V\in \cH_1,\; W\in \cH_2 \}$ respectively. 
Then $\cH_1\cH_2$ is identified with the tensor product $\cH_1\otimes \cH_2$, and $\cH_1\cH_2^*$ is identified with 
$\B(\cH_2,\cH_1)$ through left multiplication. 
Occasionally, we consider infinite dimensional Hilbert spaces in $M(A)$, and in that case we take the norm closure 
to define $\cH_1\cH_2$ and $\cH_1\cH_2^*$. 
Then $\cH_1\cH_2$ is a Hilbert space again and $\cH_1\cH_2^*$ is isometric to the space of compact operators $\K(\cH_2,\cH_1)$.

If a Hilbert space $\cH$ in $M(A)$ is globally invariant under $\alpha$, it carries a unitary representation of $G$, 
and hence $\cH$ is considered as an object in the category $\cU(G)$. 
We denote by $\cU(G,\alpha)$ the set of globally $\alpha$-invariant Hilbert spaces in $M(A)$ with support 1. 
For $\cH_1,\cH_2\in \cU(G,\alpha)$, we define the space of morphisms $(\cH_1,\cH_2)_G$ from $\cH_1$ to $\cH_2$ 
in the category $\cU(G,\alpha)$ by 
\[(\cH_1,\cH_2)_G:=\cH_2\cH_1^*\cap M(A^\alpha). \]
In this way, we get a tensor subcategory $\cU(G,\alpha)$ of $\cU(G)$.

For $\cH\in \cU(G,\alpha)$, we define $\rho_\cH\in \End(A)_0$ by 
\[\rho_\cH(x)=\sum_i\psi(\cH)_ix\psi(\cH)_i^*,\]
which does not depend on the choice of the orthonormal basis, and hence $\rho_\cH$ commutes with $\alpha_g$. 
By construction, we have $[\rho_\cH]=\dim \cH[\id_A]$, and $d(\rho_\cH)=\dim \cH$. 
We define $\calpha_\cH\in \End(A^\alpha)_0$ to be the restriction of $\rho_\cH$ to $A^\alpha$. 
Note that $Vx=\calpha_\cH(x)V$ holds for all $x\in A^\alpha$ and $V\in \cH$. 
Since $(\rho_{\cH_1},\rho_{\cH_2})=\cH_2\cH_1^*$, we have $(\cH_1,\cH_2)_G\subset (\calpha_{\cH_1},\calpha_{\cH_2})$,  
and the irreducibility of $A\supset A^\alpha$ implies the equality of the two sets. 
Since $\rho_{\cH_1}\circ \rho_{\cH_2}=\rho_{\cH_1\cH_2}$, we have $\calpha_{\cH_1\cH_2}=\calpha_{\cH_1}\circ \calpha_{\cH_2}$.  
Setting $\calpha_T=T$ for $T\in (\cH_1,\cH_2)_G$, we get a tensor functor $\calpha: \cU(G,\alpha)\to \End(A^\alpha)_0$, 
which is in fact an embedding of $\cU(G,\alpha)$ into $\End(A^\alpha)_0$.  

Using the assumption that $A^\alpha$ belongs to either $\cC_1$ or $\cC_2$, we can show that $\cU(G,\alpha)$ has a direct sum, 
up to equivalence, and that every object in $\cU(G,\alpha)$ is a direct sum of simple objects. 
Indeed, for $\cH_1,\cH_2\in \cU(G,\alpha)$, we choose isometries $s_1,s_2\in M(A^\alpha)$ satisfying the $\cO_2$ relation 
$s_1s_1^*+s_2s_2^*=1$, and set $\cH_1\oplus \cH_2=s_1\cH_1+s_2\cH_2\in \cU(G,\alpha)$. 
Then we have 
\[\calpha_{\cH_1\oplus \cH_2}(x)=s_1\calpha_{\cH_1}(x)s_1^*+s_2\calpha_{\cH_2}(x)s_2^*,\]
which is compatible with the definition of a direct sum in $\End(A^\alpha)_0$. 
Let $\cH\in \cU(G,\alpha)$ and let $\{p_i\}_{i=1}^n$ be a set of minimal projections in $(\cH,\cH)_G=(\calpha_\cH,\calpha_\cH)$ 
whose summation is 1. 
Then \cite[Lemma 4.1]{I02} shows that $p_i$ is equivalent to 1 in $M(A^\alpha)$, and there exist isometries $V_i\in M(A^\alpha)$ 
satisfying $V_iV_i^*=p_i$. 
Now $\cH_i:=V_i^*\cH\in \cU(G,\alpha)$, and $\cH$ is equivalent to the direct sum of $\cH_i$, $i=1,2,\ldots,n$. 

\begin{lemma}\label{equivalence1} Under the assumption of Theorem \ref{main1}, assume that $A^\alpha$ belongs to either $\cC_1$ or $\cC_2$. 
Then for every representation $\sigma \in \cU(G)$, there exists $\cH_\sigma\in \cU(G,\alpha)$ equivalent to $\sigma$. 
\end{lemma}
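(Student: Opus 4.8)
The plan is to realize every irreducible $\pi\in\hG$ by one object $\cH_\pi\in\cU(G,\alpha)$ and then to obtain an arbitrary $\sigma$ by direct sums; this reduction is harmless, since the direct sum $s_1\cH_1+s_2\cH_2$ constructed before the statement from $\alpha$-invariant isometries $s_1,s_2\in M(A^\alpha)$ again has support $1$ and carries the direct sum representation, so $\sigma=\bigoplus_k m_k\pi_k$ is realized by $\bigoplus_k m_k\cH_{\pi_k}$. Let $R\subset\hG$ be the set of classes realized in $\cU(G,\alpha)$ and let $S=\{\pi\in\hG;\ A^\alpha(\pi)\neq\{0\}\}$. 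Clearly $R\subset S$, and $S$ is invariant under conjugation (if $0\neq x\in A^\alpha(\pi)$ then $0\neq x^*\in A^\alpha(\overline{\pi})$) and contains the trivial class. Since $\cU(G,\alpha)$ is a tensor subcategory of $\cU(G)$ (through $\cH_1\cH_2$, which again has support $1$) and is closed under subobjects and direct sums, $R$ is closed under taking the irreducible components of a tensor product $\pi_1\otimes\pi_2$ (realize it by $\cH_1\cH_2$ and split into subobjects). Hence, granting $R=S$, the set $R$ would be a conjugation-closed, tensor-component-closed subset of $\hG$ containing the trivial class, i.e. the spectrum $\widehat{G/N}$ of a quotient with $N=\bigcap_{\pi\in R}\Ker\pi=\bigcap_{\pi\in S}\Ker\pi$, and the proof would reduce to showing $N=\{e\}$.

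The equality $N=\{e\}$ will come from faithfulness and minimality. For $g\in N$ and $\pi\in S$ we have $\pi(g)=1$, and since $\alpha$ acts on the $\pi$-isotypic subspace $A^\alpha(\pi)$ through $1\otimes\pi(g)$, the automorphism $\alpha_g$ is the identity on $A^\alpha(\pi)$ for every $\pi\in S$. As $A^\alpha(\pi)=\{0\}$ for $\pi\notin S$, the Peter--Weyl/Fej\'er approximation shows that $\alpha_g$ is the identity on a dense subspace, hence $\alpha_g=\id$; faithfulness forces $g=e$. It therefore remains only to prove $R=S$, that is, that every $\pi$ with $A^\alpha(\pi)\neq\{0\}$ is in fact realized in $\cU(G,\alpha)$.

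This realization step is the heart of the matter. Fix $\pi$ with $A^\alpha(\pi)\neq\{0\}$, equivalently $A^\alpha_1(\pi)\neq\{0\}$, and regard $A^\alpha(\pi)$ as a right Hilbert $A^\alpha$-module with inner product $E_\alpha(x^*y)$; since $E_\alpha$ is faithful and $A^\alpha$ is simple, the closed span of $\{E_\alpha(x^*y)\}$ is a nonzero, hence dense, ideal, so this module is full. The goal is to promote it to an orthonormal family $\{\phi_i\}\subset M(A)$ with $\phi_i^*\phi_j=\delta_{ij}$, $\alpha_g(\phi_j)=\sum_i\pi_{ij}(g)\phi_i$, and projection support $s=\sum_i\phi_i\phi_i^*\in M(A^\alpha)$, and then to normalize $s$ to $1$: using that $A^\alpha$ lies in $\cC_1$ or $\cC_2$ one arranges $s$ to be equivalent to $1$ in $M(A^\alpha)$ as in \cite[Lemma 4.1]{I02} (the mechanism already used before the statement), and conjugating by an isometry $W\in M(A^\alpha)$ with $W^*W=1$ and $WW^*=s$ produces $\psi_i=W^*\phi_i$ with $\sum_i\psi_i\psi_i^*=1$ and the same transformation law, so that $\cH_\pi=\mathrm{span}\{\psi_i\}\in\cU(G,\alpha)$ carries $\pi$. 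I expect the construction of the orthonormal frame to be the genuine obstacle: irreducibility of $\pi$ forbids any Gram--Schmidt normalization by right multiplication, since an $A^\alpha$-valued correction intertwining the $\pi$-action is forced to be scalar, so one must instead use the finite-index inclusion theory of Section~2.3 to present the full spectral module as a summand of a finitely generated projective module carrying an endomorphism in $\End(A^\alpha)_0$, and then exploit the infiniteness of $M(A^\alpha)$ provided by $\cC_1$ or $\cC_2$ to trivialize this module $G$-equivariantly. Once $R=S$ is established, the first two paragraphs give $R=\hG$, and the lemma follows.
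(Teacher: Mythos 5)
Your overall architecture is sound and matches what the paper's one-line citation to \cite[Lemma III 3.4]{AHKT77} is meant to deliver: reduce to irreducibles via the direct-sum construction, let $R\subset S$ be the realized classes inside the classes with nonzero spectral subspace, observe that $R$ is closed under tensor components and that $S$ is conjugation-closed, and deduce $R=\hG$ from faithfulness once $R=S$ is known. The problem is that essentially the entire content of the lemma sits in the step you explicitly defer: given $0\neq x\in A^\alpha_1(\pi)$, produce $\psi_1,\dots,\psi_{d(\pi)}\in M(A)$ with $\psi_i^*\psi_j=\delta_{ij}$, $\sum_i\psi_i\psi_i^*=1$ and $\alpha_g(\psi_j)=\sum_i\psi_i\pi_{ij}(g)$. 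You correctly diagnose that the only admissible right normalization is $x_j\mapsto x_jb$ with a single $b\in A^\alpha$; this can arrange $\sum_i(x_ib)^*(x_ib)=1$ when $A^\alpha$ is purely infinite, but it cannot diagonalize the Gram matrix, because $x_i^*x_j$ lies in $A$ rather than $A^\alpha$ (only the average $E_\alpha(x_i^*x_j)=\delta_{ij}\,d(\pi)^{-1}\sum_kx_k^*x_k$ is scalar-diagonal). So a genuinely different mechanism is required, and your proposal does not supply one.

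Moreover, the mechanism you gesture at --- ``the finite-index inclusion theory of Section 2.3'' --- is not available at this point: the endomorphisms $\calpha_\pi\in\End(A^\alpha)_0$ are only defined \emph{after} $\cH_\pi$ has been constructed, and $A\supset A^\alpha$ itself is not a finite-index inclusion when $G$ is infinite, so presenting $A^\alpha(\pi)$ as a finitely generated projective module ``carrying an endomorphism in $\End(A^\alpha)_0$'' presupposes the conclusion. The actual argument, which is the substance of the AHKT lemma the paper cites, is a comparison-of-projections argument: one passes to the fixed-point algebra of $\alpha\otimes\Ad\overline{\pi}$ on $A\otimes\K(H_{\overline{\pi}})$, where a nonzero element of $A^\alpha_1(\pi)$ becomes a single element $Y$ with $Y^*Y$ supported in a corner isomorphic to a hereditary subalgebra of $A^\alpha$, and then uses proper infiniteness (class $\cC_2$) or stability (class $\cC_1$) of that fixed-point algebra to replace $Y$ by a partial isometry whose range projection is equivalent to $1$; unwinding this partial isometry yields the orthonormal multiplet, and the support is then normalized to $1$ exactly as in your final paragraph. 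Until this construction is carried out, the proof is incomplete.
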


\begin{proof} This essentially follows from \cite[Lemma III 3.4]{AHKT77}. 
\end{proof}

\begin{remark}\label{equivalence2} If we only assume that $A^\alpha$ is unital simple purely infinite in the above lemma, we can still have 
a globally $\alpha$-invariant Hilbert space equivalent to $\sigma$, not necessarily with support 1. 
Indeed, choosing a non-zero projection $e\in A^\alpha$ with $[e]_0=0$ in $K_0(A^\alpha)$ and applying the lemma to $eAe$, 
we get $\cH_\sigma\in \cU(G,\alpha|_{eAe})$ equivalent to $\sigma$. 
We choose an isometry $V \in A^\alpha$ with $VV^*\leq e$. 
Then $\cH_\sigma V$ is the desired Hilbert space.   
\end{remark}

We call the functor $\calpha$ the pre-dual action of $\alpha$. 
For each $\pi\in \hG$, we fix $\cH_\pi\in \cU(G,\alpha)$ equivalent to $\pi$, and denote 
$\calpha_\pi=\calpha_{\cH_\pi}$ for simplicity. 
We arrange the orthonormal basis $\{\psi(\cH_\pi)_i\}_i$ so that it is consistent with the orthonormal basis 
$\{\xi(\pi)_i\}_i$ of $H_\pi$ we have already chosen, and denote $\psi(\pi)_i=\psi(\cH_\pi)_i$ for simplicity. 

Let $\bpi$ be the complex conjugate representation of $\pi\in \hG$.  
We choose an orthonormal basis $\{\xi(\bpi)_i\}_i$ of the representation space of $\bpi$ 
so that $\bpi_{ij}(g)=\overline{\pi_{ij}(g)}$ holds. 
When $\pi$ and $\bpi$ are inequivalent, we can arrange $\bpi$ to be a member of $\hG$. 
When $\pi$ and $\bpi$ are equivalent, we have two specially chosen orthonormal bases $\{\psi(\pi)_i\}_i$ 
and $\{\psi(\bpi)_i\}_i$ of $\cH_\pi$ corresponding to $\{\xi(\pi)_i\}_i$ and $\{\xi(\bpi)_i\}_i$ respectively . 

For $\pi\in \hG$, we set 
\[R_\pi=\frac{1}{\sqrt{d(\pi)}}\sum_{i=1}^{d(\pi)}\psi(\bpi)_i\psi(\pi)_i\in (\id,\calpha_{\bpi}\calpha_\pi),\] 
\[\bR_\pi=\frac{1}{\sqrt{d(\pi)}}\sum_{i=1}^{d(\pi)}\psi(\pi)_i\psi(\bpi)_i\in (\id,\calpha_{\pi}\calpha_{\bpi}),\]
which are isometries satisfying 
\[\bR_\pi^*\calpha_\pi(R_\pi)=R_\pi^*\calpha_{\bpi}(\bR_\pi)=\frac{1}{d(\pi)}.\]
This in particular shows that $d(\calpha_\pi)=d(\pi)$ as $\calpha_\pi$ is irreducible, 
and more generally shows $d(\calpha_\cH)=\dim \cH$ by the additivity of the statistical dimension.  

The standard left inverse $\phi_\pi:A^\alpha\to A^\alpha$ of $\calpha_\pi$ is given by 
\[\phi_\pi(x)=\frac{1}{d(\pi)}\sum_{i=1}^{d(\pi)}\psi(\pi)_i^*x\psi(\pi)_i,\quad x\in A^\alpha,\]
which is also expressed as $\phi_\pi(x)=R_\pi^*\calpha_{\bpi}(x)R_\pi$. 
Then the minimal conditional expectation for the inclusion $A^\alpha\supset \calpha_\pi(A^\alpha)$ can be expressed as 
$\calpha_\pi\circ \phi_\pi$. 

Now we can reformulate the (4) part of Theorem \ref{qp1} as follows:

\begin{lemma}\label{predual action} Under the assumption of Theorem \ref{main1}, assume that $A^\alpha$ belongs to either 
$\cC_1$ or $\cC_2$. 
Then $\alpha$ is quasi-product if and only if the endomorphisms $\calpha_\pi$, $\pi\in \hG\setminus\{1\}$, are properly outer. 
\end{lemma}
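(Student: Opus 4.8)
The plan is to read off the equivalence directly from condition $(4)$ of Theorem \ref{qp1}, which asserts that $\alpha$ is quasi-product if and only if the dual endomorphisms $\halpha_\pi$, $\pi\in\hG\setminus\{1\}$, of the stabilized crossed product $(A\rtimes_\alpha G)\otimes\K$ are properly outer. It therefore suffices to show that, for each fixed $\pi$, the endomorphism $\halpha_\pi$ is properly outer precisely when $\calpha_\pi$ is. The bridge between the two pictures is the Roberts--BEK duality that this subsection has been reformulating: since $\alpha$ is minimal with simple fixed point algebra, the action is saturated, so $A\rtimes_\alpha G$ is Morita equivalent to $A^\alpha$; as both algebras are separable, this upgrades by Brown--Green--Rieffel to a $*$-isomorphism $\Theta\colon (A\rtimes_\alpha G)\otimes\K\to A^\alpha\otimes\K$. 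First I would make $\Theta$ explicit in Roberts' spectral language, assembling the imprimitivity bimodule from the spectral subspaces $A^\alpha_1(\pi)$ together with the Hilbert spaces $\cH_\pi\subset M(A)$ and their orthonormal bases $\{\psi(\pi)_i\}_i$.

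Next I would verify that $\Theta$ intertwines the dual endomorphism with the pre-dual one, namely $\Theta\circ\halpha_\pi=(\calpha_\pi\otimes\id_\K)\circ\Theta$ up to inner equivalence, the only ambiguity being a possible replacement of $\calpha_\pi$ by its conjugate $\calpha_{\bpi}$ coming from the orientation of the duality. This is the step where BEK's definition of $\halpha_\pi$ — which the authors themselves call tedious — must be matched with the Hilbert-space description: $\halpha_\pi$ should become, after passing through $\Theta$, the restriction $\calpha_\pi$ of $\rho_{\cH_\pi}$ to $A^\alpha$. Functoriality of $\calpha$ together with the conjugate equations $\bR_\pi^*\calpha_\pi(R_\pi)=1/d(\pi)$ is exactly what forces the charges and statistical dimensions to correspond. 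Since $\pi\mapsto\bpi$ permutes $\hG\setminus\{1\}$, the family $\{\calpha_\pi\}_{\pi\neq 1}$ consists of properly outer endomorphisms if and only if $\{\calpha_{\bpi}\}_{\pi\neq 1}$ does, so the conjugate ambiguity is harmless for the quantified statement.

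Finally I would invoke the stability of proper outerness under this identification: the definition of proper outerness is phrased purely in terms of nonzero hereditary C$^*$-subalgebras and norm infima, and these notions are transported unchanged both by the stabilization $B\mapsto B\otimes\K$ and by a $*$-isomorphism, so $\calpha_\pi$ is properly outer if and only if $\calpha_\pi\otimes\id_\K$ is, hence if and only if $\halpha_\pi$ is. Chaining the three steps yields that $\alpha$ is quasi-product $\iff$ every $\halpha_\pi$ ($\pi\neq 1$) is properly outer $\iff$ every $\calpha_\pi$ ($\pi\neq 1$) is properly outer. I expect the main obstacle to be the middle step: constructing the duality isomorphism $\Theta$ and checking that it carries $\halpha_\pi$ to $\calpha_\pi$, since this requires translating the crossed-product definition of the dual endomorphisms into Roberts' spectral picture while keeping careful track of conjugates, multiplicities, and the stabilizing copy of $\K$.
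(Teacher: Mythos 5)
Your plan follows exactly the route the paper intends: Lemma \ref{predual action} is stated without proof precisely because it is meant as the reformulation of condition (4) of Theorem \ref{qp1} in Roberts' spectral picture, i.e., transporting the dual endomorphisms $\halpha_\pi$ on $(A\rtimes_\alpha G)\otimes\K$ to the pre-dual endomorphisms $\calpha_\pi$ on $A^\alpha$ through the stable isomorphism furnished by the simplicity of the crossed product (so that the corner $A^\alpha$ is full), and then using that proper outerness is insensitive to isomorphism, stabilization, and the harmless swap $\pi\leftrightarrow\bpi$. The intertwining $\Theta\circ\halpha_\pi=(\calpha_\pi\otimes\id_\K)\circ\Theta$ that you single out as the main obstacle is indeed the whole content of the lemma, and your outline of how to verify it is correct and consistent with the paper's setup.
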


We denote by $A_0$ the linear span of $\cup_{\pi\in \hG}A^\alpha(\pi)$, which is a $G$-invariant dense $*$-subalgebra of $A$. 
A direct computation shows the following: 

\begin{lemma}\label{expansion}
Under the assumption of Theorem \ref{main1}, assume that $A^\alpha$ belongs to either $\cC_1$ or $\cC_2$. 
Then 
\begin{itemize}
\item[$(1)$] For any $x\in A^\alpha$ and $\pi\in \hG$, we have 
\[P_\pi(x)=d(\pi)\sum_{i=1}^{d(\pi)}E_\alpha(x\psi(\pi)_i^*)\psi(\pi)_i
=d(\pi)\sum_{i=1}^{d(\pi)}\psi(\bpi)_i^*E_\alpha(\psi(\bpi)_ix).\]
In particular, the linear span of $\cup_{\pi\in \hG}A^\alpha\cH_\pi$ is a dense $*$-subalgebra of $A$. 
\item[$(2)$] Every $x\in A_0$ is uniquely expanded as 
\[x=\sum_{\pi\in \hG}\sum_{i=1}^{d(\pi)}x(\pi)_i\psi(\pi)_i,\]
where $x(\pi)_i=d(\pi)E_\alpha(x\psi(\pi)_i^*)\in A^\alpha$. 
\end{itemize}
\end{lemma}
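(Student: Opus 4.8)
The plan is to verify both parts by direct computation, reducing everything to the transformation law of the chosen Hilbert spaces together with the Schur orthogonality relations for matrix coefficients. The facts I will use repeatedly are the transformation rule $\alpha_g(\psi(\pi)_j)=\sum_i\pi_{ij}(g)\psi(\pi)_i$ coming from the consistency of the bases $\{\psi(\pi)_i\}_i$ and $\{\xi(\pi)_i\}_i$ (and the conjugate version $\alpha_g(\psi(\bpi)_j)=\sum_i\overline{\pi_{ij}(g)}\psi(\bpi)_i$), the orthonormality $\psi(\pi)_k^*\psi(\pi)_i=\delta_{ki}$, and the support condition $\sum_i\psi(\pi)_i\psi(\pi)_i^*=1$.

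For part $(1)$, I would write $E_\alpha=\int_G\alpha_g(\cdot)\,dg$ and expand. In the first expression, $E_\alpha(x\psi(\pi)_i^*)=\sum_k\bigl(\int_G\overline{\pi_{ki}(g)}\alpha_g(x)\,dg\bigr)\psi(\pi)_k^*$ after substituting the transformation rule for $\psi(\pi)_i^*$; multiplying on the right by $\psi(\pi)_i$, summing over $i$, and using $\psi(\pi)_k^*\psi(\pi)_i=\delta_{ki}$ collapses the double sum to $\int_G\sum_i\overline{\pi_{ii}(g)}\,\alpha_g(x)\,dg=\int_G\overline{\chi_\pi(g)}\alpha_g(x)\,dg$, so that $d(\pi)\sum_iE_\alpha(x\psi(\pi)_i^*)\psi(\pi)_i=P_\pi(x)$. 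The second expression is identical in spirit: using $\psi(\bpi)_i^*\alpha_g(\psi(\bpi)_i)=\overline{\pi_{ii}(g)}$ (again by orthonormality) and summing over $i$ reproduces $\overline{\chi_\pi(g)}$ inside the Haar integral. The ``in particular'' clause then follows at once, since the displayed formula exhibits $P_\pi(x)$ as a member of the linear span of $A^\alpha\cH_\pi$, while the spectral subspaces $A^\alpha(\pi)=P_\pi(A)$ have dense linear span by Peter--Weyl.

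For part $(2)$, existence is immediate: for $x\in A_0$ only finitely many $P_\pi(x)$ are nonzero and $x=\sum_\pi P_\pi(x)$, so part $(1)$ already yields the expansion with $x(\pi)_i=d(\pi)E_\alpha(x\psi(\pi)_i^*)$. For uniqueness I would apply $E_\alpha(\,\cdot\,\psi(\sigma)_j^*)$ to a putative expansion $x=\sum_\pi\sum_i y(\pi)_i\psi(\pi)_i$ with $y(\pi)_i\in A^\alpha$, pull the fixed-point coefficients out of the conditional expectation, and reduce to computing $E_\alpha(\psi(\pi)_i\psi(\sigma)_j^*)$. Expanding this as a Haar integral and invoking the Schur orthogonality relation $\int_G\pi_{ki}(g)\overline{\sigma_{lj}(g)}\,dg=\frac{1}{d(\pi)}\delta_{\pi\sigma}\delta_{kl}\delta_{ij}$ together with the support condition gives $E_\alpha(\psi(\pi)_i\psi(\sigma)_j^*)=\frac{\delta_{\pi\sigma}\delta_{ij}}{d(\pi)}$, whence $E_\alpha(x\psi(\sigma)_j^*)=y(\sigma)_j/d(\sigma)$ and the coefficients are forced to equal $x(\sigma)_j$.

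There is no genuine analytic obstacle here: the content is entirely the Peter--Weyl/Schur orthogonality formalism, which is why the text can call it a direct computation. The only point requiring care is the bookkeeping of conventions---the precise transformation law of $\psi(\pi)_i$ versus $\psi(\bpi)_i$, the placement of complex conjugates, and the normalization $1/d(\pi)$ in Schur orthogonality---since a transposed index or an off-by-$d(\pi)$ factor would spoil the clean identities. I would therefore fix these conventions at the outset, consistently with the choices already made in defining $R_\pi$, $\bR_\pi$ and $\phi_\pi$, and then let the orthonormality and support relations carry out the collapsing.
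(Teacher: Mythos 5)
Your proposal is correct and carries out exactly the ``direct computation'' that the paper merely alludes to (no explicit proof is given there): the transformation law $\alpha_g(\psi(\pi)_j)=\sum_i\pi_{ij}(g)\psi(\pi)_i$, Schur orthogonality, and the support condition are indeed all that is needed, and your bookkeeping of conjugates and the $1/d(\pi)$ normalization checks out. The only remark worth adding is that the hypothesis ``$x\in A^\alpha$'' in part $(1)$ must be read as ``$x\in A$'' (as your computation implicitly does), since for $x\in A^\alpha$ and $\pi\neq 1$ both sides vanish and the ``in particular'' clause would not follow.
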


Since we need the Doplicher-Roberts reconstruction theorem \cite{DR89I}, we recall the permutation symmetry $\theta$ of 
the category $\cU(G,\alpha)$,   which is an assignment of a unitary $\theta(\cH_1,\cH_2)\in 
(\cH_1\cH_2,\cH_2\cH_1)_G$ 
to each $\cH_1.\cH_2\in \cU(G,\alpha)$ given by 
\[\theta(\cH_1,\cH_2)=\sum_{ij}\psi(\cH_2)_i\psi(\cH_1)_i\psi(\cH_2)_j^*\psi(\cH_1)_i^*.\]
Then we have 
\begin{equation}\label{P1}
\theta(\cH_1,\cH_2)^*=\theta(\cH_2,\cH_1),
\end{equation}
\begin{equation}\label{P2}
\theta(\cH_1\otimes \cH_2,\cH_3)=\theta(\cH_1,\cH_3)\rho_{\cH_1}(\theta(\cH_2,\cH_3)),
\end{equation}
\begin{equation}\label{P3}
\rho_{\cH_1}(T)=\theta(\cH_3,\cH_1)T\theta(\cH_2,\cH_1)^*,\quad \forall T\in (\cH_2,\cH_3)_G. 
\end{equation}

Before ending this section, we recall quasi-free actions on the Cuntz algebras.  
The Cuntz algebra $\cO_\infty$ is the universal C$^*$-algebra generated by a separable infinite dimensional 
Hilbert space $\cH$ in a C$^*$-algebra.  
By universality, every unitary representation of a group $G$ in $\B(\cH)$ induces its action on $\cO_\infty$, which is called 
the quasi-free action arising from the unitary representation. 
We define the Cunzt algebra $\cO_n$, $n=2,3,\ldots$, and quasi-free actions on them in a similar way with $\dim \cH=n$ and  
an extra condition that $\cH$ has support 1. 
\section{Finite index inclusions with the property (BEK)} 
The following is our first main technical result. 

\begin{theorem}\label{purely infinite} 
Let $A\supset B$ be an irreducible inclusion of separable unital purely infinite simple C$^*$-algebras with 
a conditional expectation $E:A\to B$ of finite index. 
Then the inclusion $A\supset B$ has the property (BEK). 
\end{theorem}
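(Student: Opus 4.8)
The plan is to verify condition (2) of Theorem \ref{BEK} by contraposition, feeding the failure into an ultrapower. Suppose $A\supset B$ does not have the property (BEK). Then, negating (2), for every $n$ there are $x_n,y_n\in A$ with $\|x_n\|=\|y_n\|=1$ and $\sup_{b\in B_1}\|x_nby_n\|<1/n$. Passing to the ultrapower and writing $X=[(x_n)]$, $Y=[(y_n)]\in A^\omega$, we obtain unit elements with $X\beta Y=0$ for every $\beta\in B^\omega$: indeed, for $\beta=[(b_n)]$ with $\|\beta\|\le 1$ one has $\|X\beta Y\|=\lim_{n\to\omega}\|x_nb_ny_n\|\le \lim_{n\to\omega}1/n=0$. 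Thus the whole problem becomes to rule out a pair of nonzero $X,Y\in A^\omega$ with $XB^\omega Y=0$, i.e.\ to show that $B$ is ``full enough'' as a bimodule inside $A$ that it cannot be annihilated from both sides.

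The structural input is Lemma \ref{intermediate}: the inclusion $A\supset B$ is C$^*$-irreducible in the sense of R\o rdam, and I would use this in its quantitative form, namely that for every nonzero $a\in A_+$ and every $\varepsilon>0$ there are finitely many $b_1,\dots,b_m\in B$ with $\|\sum_{i=1}^m b_i^*ab_i-1\|<\varepsilon$ (the ``only if'' direction recovers simplicity of every intermediate algebra, which is exactly C$^*$-irreducibility). Applied coordinatewise to $a=x_n^*x_n$, this produces $b_1^{(n)},\dots,b_{m_n}^{(n)}\in B$ with $\sum_i (b_i^{(n)})^*x_n^*x_nb_i^{(n)}\approx 1$. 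Setting $v_i=x_nb_i^{(n)}y_n$ and using $\|\sum_i v_i^*v_i\|\le\sum_i\|v_i\|^2$ gives the lower bound $\sum_i\|x_nb_i^{(n)}y_n\|^2\ge \|y_n^*\big(\sum_i(b_i^{(n)})^*x_n^*x_nb_i^{(n)}\big)y_n\|\ge 1-1/n$, so at least one term $x_nb_i^{(n)}y_n$ is bounded away from $0$. This is already in visible tension with $\sup_{b\in B_1}\|x_nby_n\|\to 0$, which forces $\|x_nby_n\|<\|b\|/n$ for all $b\in B$.

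The hard part will be exactly the normalization: the elements $b_i^{(n)}$ need not lie in the unit ball of $B$, and the number of terms $m_n$ is not controlled, so the lower bound on a single $\|x_nb_i^{(n)}y_n\|$ degrades both by the factor $m_n$ and by $\|b_i^{(n)}\|$ once we rescale into $B_1$. Overcoming this is where the purely infinite hypotheses become decisive. Using that every nonzero hereditary subalgebra of a purely infinite simple C$^*$-algebra contains an infinite projection, and that $B$ itself is purely infinite simple, I would upgrade the approximate many-term relation $\sum_i(b_i^{(n)})^*x_n^*x_nb_i^{(n)}\approx 1$ to a single element of controlled norm, replacing the column $(b_i^{(n)})_i$ by one partial isometry in a corner of $B$ that absorbs the redundancy, so that the surviving connector can be taken in $B_1$. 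This should yield a single $b_n\in B_1$ with $\|x_nb_ny_n\|$ bounded below uniformly in $n$, contradicting the choice of $x_n,y_n$. Equivalently, one may phrase the endgame as condition (3) of Theorem \ref{BEK}: construct, as a pure-state limit along $\omega$, a faithful irreducible representation of $A$ whose restriction to $B$ stays irreducible, where Lemma \ref{finite}(1) guarantees the restriction is a finite sum of irreducibles and the purely infinite structure supplies enough pure states to arrange a single summand. In either formulation I expect the technical core to be precisely this passage from the bimodule fullness supplied by C$^*$-irreducibility to the sharp two-sided norm identity demanded by the property (BEK).
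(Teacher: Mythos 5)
Your setup---negating condition (2) of Theorem \ref{BEK}, passing to the ultrapower, and obtaining unit elements $X,Y\in A^\omega$ with $XB^\omega Y=0$---coincides with the paper's, and you correctly identify C$^*$-irreducibility (Lemma \ref{intermediate}) as the structural input. But the decisive step is missing. Applying the fullness characterization coordinatewise to $a=x_n^*x_n$ inside $A\supset B$ produces $b_1^{(n)},\dots,b_{m_n}^{(n)}$ with no uniform control on $m_n$ or on $\max_i\|b_i^{(n)}\|$; the resulting bound $\max_i\|x_nb_i^{(n)}y_n\|^2\geq (1-\varepsilon)/m_n$ is perfectly compatible with $\sup_{b\in B_1}\|x_nby_n\|\leq 1/n$ once $m_n\max_i\|b_i^{(n)}\|^2$ grows like $n^2$, so there is no contradiction yet. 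The proposed repair---absorbing the column $(b_i^{(n)})_i$ into a single partial isometry in a corner of $B$---is not carried out and does not obviously work: if $s_1,\dots,s_{m_n}$ are range-orthogonal isometries in $B$ and $c=\sum_i s_ib_i^{(n)}$, then $\|c\|$ is still uncontrolled and $\|x_ncy_n\|$ bears no evident relation to the individual $\|x_nb_i^{(n)}y_n\|$, since the $s_i$ sit between $x_n$ and $b_i^{(n)}$. What you actually need is the fullness statement for the \emph{ultrapower} inclusion $A^\omega\supset B^\omega$ applied to $X^*X$ (which would immediately force $Y=0$), and for that you must first know that $A^\omega\supset B^\omega$ is an \emph{irreducible} inclusion of simple C$^*$-algebras with a finite-index conditional expectation, so that Lemma \ref{intermediate} applies to it. Equivalently, you need a uniform, quantitative version of the irreducibility of $A\supset B$; that is the one genuinely new ingredient, and your outline does not supply it.

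This is exactly where the paper puts the work. Lemmas \ref{scalar} and \ref{commutant} show that for an irreducible representation $\Pi$ of $A$, any contraction almost commuting with $\Pi(B)_1$ is, up to a constant depending only on $\Pi$, close to the commutant $\Pi(B)'$, which is finite dimensional by Lemma \ref{finite}(1) because the index is finite; combined with the faithfulness of $\Pi|_B$ this yields $A^\omega\cap(B^\omega)'=\C$. The contradiction is then reached with no quantitative fullness argument at all: $XB^\omega Y=0$ makes the intermediate algebra generated by $B^\omega XB^\omega$, $B^\omega YB^\omega$ and $B^\omega$ non-prime, contradicting the C$^*$-irreducibility of the (now known to be irreducible, finite-index) inclusion $A^\omega\supset B^\omega$. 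Note also that pure infiniteness enters not through an absorption trick but only to guarantee that $A^\omega$ and $B^\omega$ remain simple and to reduce to the unital case; if you want to salvage your route, the lemma you must prove is precisely this uniform almost-commutant estimate.
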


We need to prepare a few lemmata before proving the theorem. 
For two operators $S,T$ acting on the same Hilbert space, we denote $[S,T]=ST-TS$. 

\begin{lemma}\label{scalar} 
Let $B$ be a C$^*$-algebra, let $(\Phi,K)$ be an irreducible representation of $B$, and 
let $\delta>0$. 
Then if $T\in \B(K)_1$ satisfies 
\[\sup_{b\in B_1}\|[\Phi(b),T]\|\leq \delta,\]
there exists $\lambda\in \C$ satisfying $\|T-\lambda 1_K\|\leq 3\delta$. 
\end{lemma}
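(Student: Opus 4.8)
The plan is to prove this as a quantitative version of Schur's lemma, in two stages. First I would upgrade the hypothesis, which only controls the commutators $[\Phi(b),T]$ for $b\in B_1$, to the statement that $\|[S,T]\|\le \delta$ for \emph{every} $S\in \B(K)_1$. Since $\Phi$ is irreducible, $\Phi(B)'=\C 1_K$, hence $\Phi(B)''=\B(K)$, and by the Kaplansky density theorem the unit ball of $\Phi(B)$ is strongly dense in $\B(K)_1$. Given $S\in \B(K)_1$, I would pick a net $(b_\lambda)$ in $B_1$ with $\Phi(b_\lambda)\to S$ strongly; then for each fixed $\xi\in K$ one has $[\Phi(b_\lambda),T]\xi=\Phi(b_\lambda)(T\xi)-T(\Phi(b_\lambda)\xi)\to ST\xi-TS\xi=[S,T]\xi$, using strong convergence of $\Phi(b_\lambda)$ together with continuity of $T$. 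Lower semicontinuity of the norm then gives $\|[S,T]\xi\|\le \delta\|\xi\|$, so $\|[S,T]\|\le \delta$ for all $S\in \B(K)_1$.

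With approximate centrality in all of $\B(K)$ in hand, I would extract the scalar. Fix a unit vector $\xi\in K$, let $P$ be the orthogonal projection onto $\C\xi$, and set $\lambda=\inpr{T\xi}{\xi}$, so that $PT\xi=\lambda\xi$. Since $P\xi=\xi$, we get $[P,T]\xi=PT\xi-T\xi=\lambda\xi-T\xi$, whence $\|T\xi-\lambda\xi\|\le \|[P,T]\|\le \delta$. To propagate this to an arbitrary unit vector $\eta$, choose a unitary $U\in \B(K)$ with $U\xi=\eta$ (the case $\dim K=1$ being trivial). Then $[U,T]\xi=UT\xi-T\eta$ yields $\|UT\xi-T\eta\|\le \delta$, while $\|UT\xi-\lambda\eta\|=\|U(T\xi-\lambda\xi)\|=\|T\xi-\lambda\xi\|\le \delta$; the triangle inequality gives $\|T\eta-\lambda\eta\|\le 2\delta$. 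As $\eta$ was arbitrary, $\|T-\lambda 1_K\|\le 2\delta\le 3\delta$, which is the desired conclusion (indeed with a slightly better constant than stated).

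The one genuinely delicate point is the first stage: transferring the commutator estimate from $\Phi(B)_1$ to $\B(K)_1$ \emph{without degrading the constant} $\delta$. This is exactly why strong rather than norm approximation is needed, since the unit ball of $\Phi(B)$ need not be norm dense in $\B(K)_1$; Kaplansky density supplies strong density, the commutator map $S\mapsto [S,T]$ is strongly continuous on bounded sets, and the operator norm is strongly lower semicontinuous, so the bound survives the limit. Once this is secured, the remaining manipulations with a rank-one projection and a unitary are entirely elementary and use no finer property of $B$ or $\Phi$.
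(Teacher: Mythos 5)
Your proof is correct, and it even yields the sharper constant $2\delta$ in place of $3\delta$. The first stage coincides with the paper's: both pass from the hypothesis to $\|[S,T]\|\leq\delta$ for all $S\in \B(K)_1$ via the Kaplansky density theorem (with the standard fact that the unit ball of $B$ maps onto the unit ball of $\Phi(B)$, since the latter is a quotient of $B$). The second stage is where you genuinely diverge. The paper proves three successive claims: first that $\|T\xi-\inpr{T\xi}{\xi}\xi\|\leq\delta$ for each unit vector $\xi$, using a pair of unitaries $u_\pm$ exchanging $\xi$ with the normalized orthogonal component of $T\xi$; second that the diagonal values $\inpr{T\xi}{\xi}$ and $\inpr{T\eta}{\eta}$ differ by at most $\delta$ for orthogonal unit vectors, via a swap unitary; and third it combines these through an auxiliary vector $\zeta\perp\{\xi,\eta\}$, which costs $2\delta$ in the comparison of diagonal values and forces the assumption $\dim K=\infty$ (the finite-dimensional case being dismissed separately). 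You instead obtain the local estimate $\|T\xi-\lambda\xi\|\leq\delta$ in one line by testing against the rank-one projection onto $\C\xi$, and then transport it to an arbitrary unit vector $\eta$ with a single unitary $U\xi=\eta$, using $\|UT\xi-\lambda\eta\|=\|T\xi-\lambda\xi\|$ and $\|UT\xi-T\eta\|\leq\|[U,T]\|$. This avoids the orthogonality bookkeeping and the dimension split entirely, works uniformly for all $\dim K\geq 1$, and improves the bound to $2\delta$; since the lemma is only invoked qualitatively (the constant is absorbed into the $C\delta$ of Lemma \ref{commutant}), either constant serves the paper's purposes equally well.
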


\begin{proof} By the Kaplansky density theorem, we have $\|[S,T]\|\leq \delta$ for all $S\in \B(K)_1$. 
We assume $\dim K=\infty$ as the finite dimensional case can be easily handled. 

We first claim that $\|T\xi-\inpr{T\xi}{\xi}\xi\|\leq\delta$ holds for every unit vector $\xi\in K$. 
The claim holds for an eigenvector of $T$.  
Assume that $\xi$ and $T\xi$ are linearly independent, and let $\lambda=\inpr{T\xi}{\xi}$ and 
$\eta=\|T\xi-\lambda\xi\|^{-1}\left(T\xi-\lambda\xi\right)$.
Then we can choose a unitary $u_{\pm}\in \B(K)$ satisfying $u_\pm\xi=\eta$ and $u_\pm\eta=\pm \xi$.  
We have 
\[[u_\pm,T]\xi=u_\pm T\xi-T\eta
 =u_\pm(\lambda\xi+\|T\xi-\lambda\xi\|\eta)-T\eta 
 =\lambda\eta-T\eta\pm \|T\xi-\lambda\xi\|\xi.\] 
Since $\|[u_\pm,T]\|\leq\delta$, we get $\|T\xi-\lambda\xi\|\leq\delta$, and the claim is shown. 

Next we claim that if $\xi$ and $\eta$ are mutually orthogonal unit vectors in $K$, we have 
$|\inpr{T\xi}{\xi}-\inpr{T\eta}{\eta}|\leq\delta$. 
We choose a unitary $u\in \B(K)$ satisfying 
$u\xi=\eta$ and $u\eta=\xi$. 
Note that we have $u^*\eta=\xi$. 
Then the claim follows from
\[\inpr{[u,T]\xi}{\eta}=\inpr{T\xi}{u^*\eta}-\inpr{Tu\xi}{\eta}=
\inpr{T\xi}{\xi}-\inpr{T\eta}{\eta}. 
\]

Finally, we fix a unit vector $\xi\in K$, and set $\lambda=\inpr{T\xi}{\xi}$. 
Let $\eta$ be an arbitrary unit vector in $K$. 
Choosing a unit vector $\zeta\in \{\xi,\eta\}^\perp$ and applying the second claim, we get 
$|\inpr{T\xi}{\xi}-\inpr{T\eta}{\eta}|\leq 2\delta$, 
which together with the first claim implies 
\[\|T\eta-\lambda\eta\|\leq \|T\eta-\inpr{T\eta}{\eta}\eta\|+\|\inpr{T\eta}{\eta}\eta-\lambda \eta\|\leq3\delta.\]
Thus the statement is shown. 
\end{proof}

\begin{lemma}\label{commutant} 
Let  $A\supset B$ be an inclusion of C$^*$-algebras with finite index, and let 
$(\Pi,H)$ be an irreducible representation of $A$. 
Then there exists a constant $C>0$ depending only on $\Pi$ satisfying the following property: 
whenever $T\in \B(H)_1$ satisfies 
\[\sup_{b\in B_1}\|[\Pi(b),T]\|\leq\delta,\]
there exists $T_1\in \Pi(B)'$ satisfying $\|T_1-T\|\leq C\delta$. 
\end{lemma}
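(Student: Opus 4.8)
The plan is to exploit the finiteness of the decomposition of $\Pi|_B$ into irreducibles, which holds by Lemma \ref{finite}(1) precisely because $A\supset B$ has finite index, and then to analyze $T$ blockwise: Lemma \ref{scalar} handles the ``diagonal'' (equivalent) blocks, while a Kaplansky density argument kills the off-diagonal (inequivalent) ones.

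First I would fix the structure of $\Pi(B)'$. By Lemma \ref{finite}(1) the restriction $\Pi|_B$ is a finite direct sum of irreducible representations; grouping equivalent summands, I may choose a unitary identification
\[H=\bigoplus_{j=1}^r K_j\otimes \C^{m_j},\qquad \Pi(b)=\bigoplus_{j=1}^r \sigma_j(b)\otimes 1\quad (b\in B),\]
where $\sigma_1,\dots,\sigma_r$ are mutually inequivalent irreducible representations of $B$ with multiplicities $m_1,\dots,m_r$. Then $\Pi(B)'=\bigoplus_j 1_{K_j}\otimes \M_{m_j}(\C)$, a finite-dimensional algebra whose size depends only on $\Pi$. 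Writing $T=(T_{jl})$ with $T_{jl}\in \B(K_l\otimes\C^{m_l},K_j\otimes\C^{m_j})$ and each $T_{jl}=(T_{jl}^{pq})_{p,q}$, $T_{jl}^{pq}\in\B(K_l,K_j)$, compression by the coordinate projections gives $\|T_{jl}^{pq}\|\le 1$, and comparing with the corresponding entry of $[\Pi(b),T]$ yields
\[\sup_{b\in B_1}\|\sigma_j(b)T_{jl}^{pq}-T_{jl}^{pq}\sigma_l(b)\|\le\delta.\]

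For the diagonal blocks $j=l$, each entry $T_{jj}^{pq}$ almost commutes with the irreducible $\sigma_j$, so Lemma \ref{scalar} produces a scalar $\lambda_j^{pq}$ with $\|T_{jj}^{pq}-\lambda_j^{pq}1_{K_j}\|\le 3\delta$; set $\Lambda_j=(\lambda_j^{pq})\in\M_{m_j}(\C)$. The crux is the off-diagonal case $j\ne l$, where $\sigma_j\not\cong\sigma_l$ and I must show each $T_{jl}^{pq}$ is of order $\delta$. Here I would apply the Kaplansky density theorem to the representation $\sigma_j\oplus\sigma_l$ of $B$: since $\sigma_j,\sigma_l$ are irreducible and inequivalent, its image has bicommutant $\B(K_j)\oplus\B(K_l)$, so there is a net $b_i\in B_1$ with $\sigma_j(b_i)\to 0$ and $\sigma_l(b_i)\to 1_{K_l}$ strongly. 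Setting $S:=T_{jl}^{pq}$, applying the displayed inequality to an arbitrary $\xi\in K_l$, and passing to the limit (using $\sigma_j(b_i)(S\xi)\to 0$ and $S\sigma_l(b_i)\xi\to S\xi$) gives $\|S\xi\|\le\delta\|\xi\|$, hence $\|T_{jl}^{pq}\|\le\delta$.

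Finally I would assemble $T_1:=\bigoplus_j 1_{K_j}\otimes\Lambda_j\in\Pi(B)'$ and bound $\|T-T_1\|$ by summing block norms: the diagonal blocks contribute $\|T_{jj}-1_{K_j}\otimes\Lambda_j\|\le 3m_j^2\delta$ and the off-diagonal blocks contribute $\|T_{jl}\|\le m_jm_l\delta$, so $\|T-T_1\|\le C\delta$ with $C=3\sum_j m_j^2+\sum_{j\ne l}m_jm_l$, depending only on $\Pi$. I expect the inequivalent off-diagonal estimate to be the main obstacle; what makes it work is that inequivalence renders the commutant of $\sigma_j\oplus\sigma_l$ two-dimensional, so Kaplansky density lets one approximate the separating operator $(0,1_{K_l})$ by elements of $B_1$, forcing $T_{jl}^{pq}$ to nearly vanish. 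The finite-index hypothesis enters only through Lemma \ref{finite}(1), which guarantees $r<\infty$ and hence a uniform constant.
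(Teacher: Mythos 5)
Your proof is correct and follows essentially the same route as the paper's: decompose $\Pi|_B$ into finitely many isotypic blocks via Lemma \ref{finite}(1), apply Lemma \ref{scalar} to the diagonal entries, and use the Kaplansky density theorem to control the part of $T$ linking inequivalent components. The only (cosmetic) difference is that the paper removes the off-diagonal blocks in one step by commuting $T$ with the central projections $z_k\in\Pi(B)''$, obtaining $\|T-\sum_k z_kTz_k\|\leq m\delta$, whereas you bound each inequivalent block entry by $\delta$ using a net in $B_1$ whose image converges strongly to such a central projection --- the same Kaplansky argument in a different package, and both give a constant depending only on $\Pi$.
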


\begin{proof} 
Thanks to Lemma \ref{finite}, the restriction $\Pi|_B$ of $\Pi$ to $B$ is a finite direct sum of irreducible representations. 
Let $(\Pi_k,H_k)$, $k=1,2,\ldots,m$, be the mutually disjoint irreducible components of 
the restriction $\Pi|_B$. 
Then we may assume
\[H=\bigoplus_{k=1}^m H_k\otimes \C^{n_k},\quad \Pi(b)=\bigoplus_{k=1}^m \Pi_k(b)\otimes 1_{\C^{n_k}},\quad b\in B,\]
where $n_k$ is the multiplicity of $\Pi_i$. 
Note that we have 
\[\Pi(B)''=\bigoplus_{k=1}^m \B(H_k)\otimes 1_{\C^{n_k}}.\]
Let $z_k=1_{H_k}\otimes 1_{\C^{n_k}}$. 
By the Kaplansky density theorem, we have $\|[z_k,T]\|\leq \delta$. 
Let 
\[\Delta(T)=\sum_{k=1}^m z_kTz_k.\]
Then $\|T-\Delta(T)\|\leq m\delta$. 

We take a system of matrix units $\{e^{(k)}_{ij}\}_{1\leq i,j\leq n_k}$ in $\B(\C^{n_k})$ and express $\Delta(T)$ as 
\[\Delta(T)=\bigoplus_{k=1}^m\sum_{1\leq i,j\leq n_k}T^{(k)}_{ij}\otimes e^{(k)}_{ij},\]
where $T^{(k)}_{ij}\in \B(H_k)$. 
For $b\in B_1$, we have  
\[\|[T^{(k)}_{ij},\Pi_k(b)]\|\leq \|[\Delta(T),\Pi(b)]\|\leq (2m+1)\delta.\]
By Lemma \ref{scalar}, there exist $\lambda^{(k)}_{ij}\in \C$ satisfying 
$\|T^{(k)}_{ij}-\lambda^{(k)}_{ij}1_{H_k}\|\leq3(2m+1)\delta$. 
Let 
\[T_1=\bigoplus_{k=1}^m \left(1_{H_k}\otimes \sum_{1\leq i,j\leq n_k}\lambda^{(k)}_{ij}e^{(k)}_{ij}\right).\]
Then $T_1\in \Pi(B)'$, and 
\[\|T_1-T\|\leq\|T_1-\Delta(T)\|+\|\Delta(T)-T\|\leq  3(2m+1)\delta \max_{1\leq k\leq m}n_k^2+m\delta,\]
which shows the statement. 
\end{proof}

\begin{proof}[Proof of Theorem \ref{purely infinite}] 
When $B$ is not unital, it is stable because it is purely infinite and simple. 
In this case, we can choose a projection $p\in B$ satisfying 
\[(A,B,E)\cong (pAp\otimes \K,pBp\otimes \K,E|_{pAp}\otimes \id_\K).\]
Thus we may and do assume that $A$ and $B$ are unital to prove the statement.  

Assume on the contrary that the inclusion $A\supset B$ does not have the property (BEK). 
Then for each natural number $n\in \N$, there exist $x_n,y_n\in A$ with $\|x_n\|=\|y_n\|=1$ satisfying 
\[\sup_{b\in B_1}\|x_nby_n\|\leq \frac{1}{n}.\]
We choose a free ultra-filter $\omega\in \beta\N\setminus\N$. 
Note that $A^\omega$ is purely infinite and simple (see \cite[Proposition 6.2.6]{R02}). 
Then $A^\omega\supset B^\omega$ is an inclusion of purely infinite simple C$^*$-algebras with a conditional 
expectation $E^\omega:A^\omega\to B^\omega$ given by 
\[E^\omega([(x_n)])=[(E(x_n))].\]
Note that we have $\Ind E^\omega=\Ind E$. 
We define $x,y\in A^\omega$ by $x=[(x_n)]$ and  $y=[(y_n)]$. 
Then $\|x\|=\|y\|=1$ and $xB^\omega y=\{0\}$ by construction.    

As was observed in \cite[Section 3]{BEK93}, the C$^*$-algebra generated by $B^\omega x B^\omega$, $B^\omega y B^\omega$, 
and $B^\omega$ is not prime. 
Thus Lemma \ref{intermediate} implies that $A^\omega\supset B^\omega$ is not irreducible. 
Let $r=[(r_n)]\in A^\omega\cap {B^\omega}'$ with $\|r\|=1$. 
Then we have 
\[\lim_{n\to\omega}\sup_{b\in B_1}\|br_n-r_nb\|=0.\]
We choose an irreducible representation $(\Pi,H)$ of $A$. 
Then Lemma \ref{commutant} shows that there exist $Q_n\in \Pi(B)'$ satisfying 
\[\lim_{n\to \omega}\|\Pi(r_n)-Q_n\|=0.\]
We may assume that $\{Q_n\}$ is bounded. 
Since $\Pi(B)'$ is finite dimensional, the norm limit $\lim_{n\to \omega}Q_n$ exists. 
Since $B$ is simple, the restriction of $\Pi$ to $B$ is faithful, and the norm limit $\lim_{n\to\omega}r_n$ exists too. 
This means that $r\in A\cap B'=\C$ and $A^\omega\supset B^\omega$ is irreducible, which is a contradiction. 
\end{proof}

\begin{cor}\label{pipo} Let $A$ be a separable purely infinite simple C$^*$-algebra, and assume that $\rho\in \End(A)_0$ 
is irreducible and $d(\rho)>1$.  
Then $\rho$ is properly outer. 
\end{cor}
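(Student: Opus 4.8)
The plan is to read the corollary off from the two preceding results: Theorem \ref{purely infinite} will produce the property (BEK) for the inclusion $A\supset\rho(A)$, and Lemma \ref{BEKPO} will then upgrade this to proper outerness of $\rho$. So essentially all the work lies in checking that the hypotheses of those two statements are in force.

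First I would note that $\rho$ is injective, since an endomorphism of a simple C$^*$-algebra has trivial kernel; hence $\rho(A)$ is again a separable purely infinite simple C$^*$-algebra. Because $\rho\in\End(A)_0$, its image has finite index, so the minimal conditional expectation $E_\rho\colon A\to\rho(A)$ satisfies $\Ind E_\rho=[A:\rho(A)]_0=d(\rho)^2<\infty$. The assumption $d(\rho)>1$ then gives $\Ind E_\rho>1$, which rules out $\rho(A)=A$: a surjective $\rho$ would be an automorphism and force index $1$. Thus $\rho(A)$ is a \emph{proper} subalgebra of $A$. Finally, irreducibility of $\rho$ is by definition irreducibility of the inclusion $A\supset\rho(A)$.

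Granting these, I would apply Theorem \ref{purely infinite} to the irreducible finite-index inclusion $A\supset\rho(A)$ of (separable) purely infinite simple C$^*$-algebras to obtain the property (BEK), and then Lemma \ref{BEKPO}, all of whose hypotheses — $A$ separable simple, and $\rho(A)$ a proper irreducible subalgebra carrying the property (BEK) — have just been verified, to conclude that $\rho$ is properly outer. The only point needing attention, and the sole obstacle, is the unital hypothesis of Theorem \ref{purely infinite}, since the corollary allows $A$ to be non-unital. In that case $A$ is stable, and I would reduce to the unital situation exactly as in the opening of the proof of Theorem \ref{purely infinite}: compressing by a projection $p\in\rho(A)$ realizes $(A,\rho(A),E_\rho)\cong(pAp\otimes\K,\,p\rho(A)p\otimes\K,\,E_\rho|_{pAp}\otimes\id_\K)$, and since the property (BEK) passes between such a corner and its stabilization, the conclusion transfers back. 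Apart from this bookkeeping the argument is immediate.
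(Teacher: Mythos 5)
Your proposal is correct and follows exactly the route the paper intends (the corollary is stated without proof precisely because it is the combination of Theorem \ref{purely infinite} and Lemma \ref{BEKPO}): the hypothesis $d(\rho)>1$ rules out surjectivity, so $\rho(A)$ is a proper irreducible finite-index subalgebra, Theorem \ref{purely infinite} gives the property (BEK), and Lemma \ref{BEKPO} yields proper outerness. Your care about the non-unital case is also consistent with the paper, whose proof of Theorem \ref{purely infinite} opens with the same stabilization reduction.
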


Although the following result is not used later, it is of interest in its own right.

\begin{theorem} Let $A\supset B$ be an irreducible inclusion of separable simple C$^*$-algebras with a conditional expectation  
$E:A\to B$ of finite index. 
Let $\K_B(\cE_E)\supset A$ be its basic construction, and let $E_1:\K_B(\cE_E)\to A$ be the dual conditional expectation. 
Let $\iota:B\hookrightarrow A$ be the inclusion map. 
Then the following conditions are equivalent: 
\begin{itemize}
\item[$(1)$] The inclusion $A\supset B$ has the property (BEK). 
\item[$(2)$] The inclusion $\K_B(\cE_E)\supset A$ has the property (BEK). 
\item[$(3)$] For any $x\in A$ and any non-zero hereditary C$^*$-subalgebra $C$ of $B$, 
\[\inf\{\|c(x-E(x))c\|;\; c\in C_+,\; \|c\|=1\}=0.\]
\item[$(4)$] For any $y\in \K_B(\cE_E)$ and any non-zero hereditary C$^*$-subalgebra $D$ of $A$, 
\[\inf\{\|d(y-E_1(y))d\|;\; d\in D_+,\; \|d\|=1\}=0.\]
\end{itemize}
If moreover $B$ belongs to either the class $\cC_1$ or $\cC_2$, the above conditions are further equivalent to 
the following two conditions:
\begin{itemize}
\item[$(5)$] Every irreducible component of $\iota\biota$ not equivalent to $\id_A$ is properly outer.  
\item[$(6)$] Every irreducible component of $\biota\iota$ not equivalent to $\id_B$ is properly outer.  
\end{itemize}
\end{theorem}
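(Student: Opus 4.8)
The plan is to exploit the self-duality of the configuration under the C$^*$-basic construction and to isolate one genuinely analytic equivalence, namely that property (BEK) coincides with the off-diagonal compression condition (3). The first step is to prove, for an \emph{arbitrary} irreducible finite-index inclusion $P\supset Q$ of simple C$^*$-algebras with expectation $E_Q$, the general lemma that $P\supset Q$ has property (BEK) iff $\inf\{\|c(x-E_Q(x))c\|:\ c\in C_+,\ \|c\|=1\}=0$ for every $x\in P$ and every non-zero hereditary $C\subset Q$. Applying this to $A\supset B$ yields $(1)\Leftrightarrow(3)$, and applying it to $\K_B(\cE_E)\supset A$ with the dual expectation $E_1$ yields $(2)\Leftrightarrow(4)$ at once. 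The easy direction $(1)\Rightarrow(3)$ goes as follows: fix a faithful irreducible $(\Pi,H)$ with $\Pi|_Q$ irreducible (Theorem \ref{BEK}), and take $x$ with $E_Q(x)=0$. Convexity of the numerical range (Toeplitz--Hausdorff) together with faithfulness forces $0\in\overline{W(\Pi(x))}$: otherwise one separates $0$ from $\overline{W(\Pi(x))}$ to get $\theta$ with $\mathrm{Re}(e^{i\theta}\Pi(x))\geq\delta 1_H$, hence $\mathrm{Re}(e^{i\theta}x)\geq\delta 1$ and $\mathrm{Re}(e^{i\theta}E_Q(x))\geq\delta$, contradicting $E_Q(x)=0$. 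Since $\Pi|_Q$ is irreducible, the vectors reachable by $C$ (namely $\overline{\Pi(CQ)H}=H$) are dense, so one selects a reachable unit vector $\xi$ with $\langle\Pi(x)\xi,\xi\rangle\approx0$ and excises it by a positive norm-one $c\in C$ (Akemann--Anderson excision), giving $\|cxc\|\approx0$.

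The floor-link $(1)\Leftrightarrow(2)$ is the stability of property (BEK) under the basic construction: from a faithful irreducible $\Pi$ of $A$ with $\Pi|_B$ irreducible one induces a faithful irreducible representation of $\K_B(\cE_E)=\overline{\mathrm{span}}\,Ae_EA$ whose restriction to $A$ is irreducible (using $e_Exe_E=E(x)e_E$), and conversely one restricts back. Combined with the general lemma this closes the chain $(1)\Leftrightarrow(2)\Leftrightarrow(3)\Leftrightarrow(4)$, so no hypothesis beyond simplicity and finite index is needed for the first four conditions.

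The hard direction of the general lemma, $(3)\Rightarrow(1)$, is the main obstacle. I would argue by contradiction exactly as in the proof of Theorem \ref{purely infinite}: if (BEK) fails, then after passing to a free ultrapower one obtains $x,y\in A^\omega$ with $\|x\|=\|y\|=1$ and $xB^\omega y=0$, which by the Bratteli--Elliott--Kishimoto non-primeness observation produces a non-scalar element of $A^\omega\cap(B^\omega)'$. In Theorem \ref{purely infinite} the contradiction came from the C$^*$-irreducibility of $A^\omega\supset B^\omega$ via Lemma \ref{intermediate}, but that used the simplicity of $A^\omega$, which holds in the purely infinite case and \emph{fails} in general. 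The entire purpose of condition (3) is to replace this input: I would show that (3) survives, in a suitably reduced form, in the ultrapower and forces every element of $A^\omega\cap(B^\omega)'$ to have vanishing off-diagonal part, hence to lie in $B^\omega$; Lemma \ref{commutant} (finite-dimensionality of $\Pi(B)'$, from finite index) then pushes such an almost-central element back to $A\cap B'=\C$, contradicting non-triviality. Getting condition (3) to bite at the level of the ultrapower, so as to bypass the missing simplicity of $A^\omega$, is the crux of the theorem.

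Finally, assume $B$ (hence $A$) lies in $\cC_1$ or $\cC_2$, and consider $(3)\Leftrightarrow(6)$ and $(4)\Leftrightarrow(5)$. Using the crossed-product decomposition $x=\sum_{\xi,i}x(\xi)_iV(\xi)_i$ and $b(x-E(x))b=\sum_{\xi\neq0,i}bx(\xi)_i\rho_\xi(b)V(\xi)_i$, the implication $(6)\Rightarrow(3)$ follows by killing the finitely many off-diagonal sector terms one at a time: proper outerness of each $\rho_\xi$ lets one make $\|cx(\xi)_i\rho_\xi(c)\|$ small inside successively smaller hereditary subalgebras of $C$, a procedure stable under passing to corners. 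For the converse I would route through property (BEK): by the general lemma, (3) gives (BEK), and then the proper ($d(\rho_\xi)>1$) components are properly outer by Lemma \ref{BEKPO}, once one checks that (BEK) of $A\supset B$ entails (BEK) of $B\supset\rho_\xi(B)$ via Frobenius reciprocity (Lemma \ref{finite}(2)) and the conjugate equations, while the automorphism components ($d(\rho_\xi)=1$, necessarily outer as distinct sectors) are properly outer by Kishimoto. The dual statements $(4)\Leftrightarrow(5)$ for the components of $\iota\biota$ follow by applying the same argument to $\K_B(\cE_E)\supset A$. I expect this last coupling to be delicate only in the stably finite class $\cC_1$: in the purely infinite class $\cC_2$ every such inclusion already has (BEK) (Theorem \ref{purely infinite}) and every proper irreducible $\rho_\xi$ is automatically properly outer (Corollary \ref{pipo}), so all six conditions hold unconditionally.
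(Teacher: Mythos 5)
Your chain has a genuine hole at the implication you yourself flag as ``the crux'': $(3)\Rightarrow(1)$ is never actually proved. You propose to rerun the ultrapower argument of Theorem \ref{purely infinite} and then ``show that (3) survives in the ultrapower and forces every element of $A^\omega\cap(B^\omega)'$ to lie in $B^\omega$,'' but you give no mechanism for this, and the obstruction you correctly identify (loss of simplicity of $A^\omega$, hence of Lemma \ref{intermediate}) is exactly what makes that route break down. The point you are missing is that no ultrapower is needed here at all: condition (3) feeds directly into the Pimsner--Popa inequality. Given $x_1,x_2\in A$ of norm one, one may assume $x_i\geq 0$ since $\|x_1bx_2\|=\||x_1|b|x_2|\|$; then $\|E(x_i)\|\geq \lambda:=(\Ind E)^{-1}$. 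Applying (3) with $C$ the hereditary subalgebra of $B$ generated by the spectral part of $E(x_i)$ above $\|E(x_i)\|-\varepsilon$ produces $c_i\in B_+$ of norm one with $\|c_i(x_i-E(x_i))c_i\|<\varepsilon$ \emph{and} $\|c_iE(x_i)c_i\|>\lambda-\varepsilon$ simultaneously; simplicity of $B$ then supplies $b\in B_1$ with $\|c_1E(x_1)c_1bc_2E(x_2)c_2\|\geq(\lambda-\varepsilon)^2$, whence $\sup_{b\in B_1}\|x_1bx_2\|\geq\lambda^2$, i.e.\ condition (2) of Theorem \ref{BEK}. This is precisely the paper's proof of $(4)\Rightarrow(2)$, and the same computation gives $(3)\Rightarrow(1)$. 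The ultrapower machinery belongs to Theorem \ref{purely infinite}, where no analogue of (3) is available as a hypothesis; importing it here is both unnecessary and unworkable.

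Two smaller points. First, your route $(3)\Rightarrow(6)$ through Lemma \ref{BEKPO} requires knowing that each inclusion $B\supset\rho_\xi(B)$ has the property (BEK), which you assert follows ``via Frobenius reciprocity and the conjugate equations'' but do not establish; the paper instead proves $(1)\Rightarrow(5)$ by pure multiplicity counting: if $\Pi|_B$ is irreducible then $\dim\Hom_A(\Pi,\Pi\circ\iota\biota)=\dim\Hom_B(\Pi\circ\iota,\Pi\circ\iota)=1$, so every irreducible component $\rho$ of $\iota\biota$ other than $\id_A$ satisfies $\dim\Hom_A(\Pi,\Pi\circ\rho)=0$ and is properly outer by Lemma \ref{containment} --- no auxiliary (BEK) statement for the components is needed. (The paper then closes the cycle $(1)\Rightarrow(5)\Rightarrow(4)\Rightarrow(2)\Rightarrow(6)\Rightarrow(3)\Rightarrow(1)$ after first stabilizing to put $A,B\in\cC_1$, which is legitimate because (1)--(4) are invariant under tensoring with $\K$ and passing to corners.) Second, your excision argument for $(1)\Rightarrow(3)$ needs the unit vector $\xi$ to satisfy $\inpr{\Pi(x)\xi}{\xi}\approx 0$ \emph{and} to define a pure state of norm one on the prescribed hereditary subalgebra $C$; these two requirements do not obviously cohabit, and you would need to perturb within the unitary orbit of pure states of $B$ (via irreducibility of $\Pi|_B$ and Kadison transitivity) to reconcile them. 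Since the paper obtains $(2)\Rightarrow(6)\Rightarrow(3)$ by the decomposition $c(x-E(x))c=\sum_{\xi\neq 0,i}cx(\xi)_i\rho_\xi(c)V(\xi)_i$ and proper outerness of the $\rho_\xi$, you may prefer to drop the numerical-range argument altogether.
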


\begin{proof} Since the conditions (1)-(4) persist after taking tensor product with $\K$ and passing to the corners by 
a non-zero projection in $M(B)$, we may and do assume that $A,B\in \cC_1$ to prove the theorem. 

(1)$\Rightarrow $(5).  We choose an irreducible representation $\Pi$ of $A$ whose restriction to $B$ is irreducible. 
Then 
\[1=\dim \Hom_B(\Pi\circ\iota,\Pi\circ\iota)=\dim \Hom_A(\Pi,\Pi\circ\iota\biota).\]
Thus if $\rho\in \End(A)_0$ is an irreducible component not equivalent to $\id_A$, we have 
$\dim\Hom_A(\Pi,\Pi\circ\rho)=0$, 
which shows that $\rho$ is properly outer by Lemma \ref{containment}. 

(5)$\Rightarrow$(4). Note that the dual inclusion $\K_B(\cE_E)\supset A$ is isomorphic to $B\supset\biota(A)$. 
Thus the statement follows from the crossed product type decomposition of the inclusion $\K_B(\cE_E)\supset A$ (see Section 2.3). 

(4)$\Rightarrow$(2). We denote $\lambda=(\Ind E)^{-1}$. Let $x_1,x_2\in \K_B(\cE_E)$ with $\|x_1\|=\|x_2\|=1$. 
Our task is to show $\sup_{a\in A_1}\|x_1ax_2\|\geq \lambda^2$. 
Since $\|x_1ax_2\|=\||x_1|a|x_2|\|$, we may and do assume that $x_1,x_2$ are positive. 
By the Pimsner-Popa inequality, we have $\|E_1(x_i)\|\geq \lambda$ for $i=1,2$. 
Let $0<\varepsilon$ be a sufficiently small constant.  
Then by assumption we can choose $a_1,a_2\in A_+$ of norm 1 satisfying 
\[\|a_i(x_i-E_1(x_i))a_i\|\ <\varepsilon,\]
\[\|a_iE_1(x_i)a_i\|> \|E_1(x_i)\|-\varepsilon\geq \lambda-\varepsilon,\]
for $i=1,2$. 
Since $A$ is simple, there exists $a\in A_1$ satisfying 
\[\|a_1E_1(x_1)a_1aa_2E_1(x_2)a_2\|\geq (\lambda -\varepsilon)^2,\]
and so
\begin{align*}
\|x_1a_1aa_2x_2\|&\geq \|a_1x_1a_1aa_2x_2a_2\| \\
 &\geq \|a_1E_1(x_1)a_1aa_2E_1(x_2)a_2\|-2\varepsilon \\
&\geq (\lambda-\varepsilon)^2-2\varepsilon.
\end{align*}
Since $\varepsilon>0$ is arbitrary, we obtain (2).  

The implications (2)$\Rightarrow$(6)$\Rightarrow$(3)$\Rightarrow$(1) follow from the same arguments. 
\end{proof}

\begin{remark}
It was shown in \cite[Theorem 7.5, Corollary 7.6]{I02} that the above condition holds for every finite depth inclusion. 
The condition (3) is adapted as the definition of the outerness of $E$ in \cite{O01}, which is essentially  
the same as the pinching property in \cite[Definition 3.13]{R23}.  
\end{remark}
\section{Proof of Theorem \ref{main1}}
As a corollary of Corollary of \ref{pipo}, we get the following. 

\begin{cor}\label{main1pi} Theorem \ref{main1} is true if $A^\alpha$ is purely infinite. 
\end{cor}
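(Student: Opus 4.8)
The plan is to derive the conclusion from Lemma~\ref{predual action}, whose content in the present situation is exactly that the pre-dual endomorphisms $\calpha_\pi$, $\pi\in\hG\setminus\{1\}$, are properly outer. To be in the situation of that lemma I would first arrange $A^\alpha\in\cC_1$. A non-unital purely infinite simple $A^\alpha$ is automatically stable, hence in $\cC_1$; the only case not immediately covered is a unital $A^\alpha$ outside Cuntz standard form, which I would remove by passing to $(A\otimes\K,\alpha\otimes\id_\K)$. Its fixed point algebra $A^\alpha\otimes\K$ is stable, purely infinite simple and $\sigma$-unital, so it lies in $\cC_1$, and the replacement preserves faithfulness and minimality, since $(A^\alpha\otimes\K)'\cap M(A\otimes\K)$ reduces to $(A^\alpha)'\cap M(A)=\C$. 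Moreover the stabilization does not affect the quasi-product property: by Theorem~\ref{qp1} this property is equivalent to property (BEK) of the fixed point inclusion, and property (BEK) descends from $A\otimes\K\supset A^\alpha\otimes\K$ to $A\supset A^\alpha$ by compressing with a rank one projection $e_{11}\in\K$, since in the formulation of Theorem~\ref{BEK}(1) the element $(x\otimes e_{11})B(y\otimes e_{11})=(xb_{11}y)\otimes e_{11}$ involves only the $(1,1)$-corner $b_{11}$ of $B$. Thus we may assume $A^\alpha\in\cC_1$.

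With this reduction, each $\calpha_\pi$ is an irreducible endomorphism of the separable purely infinite simple C$^*$-algebra $A^\alpha$ with $d(\calpha_\pi)=d(\pi)$, and I would argue according to the value of $d(\pi)$. When $d(\pi)\geq 2$, $\calpha_\pi$ is a proper irreducible endomorphism of higher index, so Corollary~\ref{pipo} applies directly and yields its proper outerness; this is the case that carries all the new input, through the property (BEK) of finite index inclusions established in Theorem~\ref{purely infinite}.

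The case $d(\pi)=1$, i.e. $\pi$ a nontrivial character, is not covered by Corollary~\ref{pipo} and is where I expect the only genuine subtlety, since there $\calpha_\pi$ is an automorphism and one must return to the classical theory of outer automorphisms. Here $\cH_\pi=\C\psi(\pi)_1$ with $\psi(\pi)_1\in M(A)$ a unitary (as $s(\cH_\pi)=1$) satisfying $\alpha_g(\psi(\pi)_1)=\pi(g)\psi(\pi)_1$ and $\calpha_\pi=\Ad\psi(\pi)_1|_{A^\alpha}$. I would prove $\calpha_\pi$ outer by minimality: if $\calpha_\pi=\Ad w$ with $w\in U(M(A^\alpha))\subset U(M(A))$, then $w^*\psi(\pi)_1\in(A^\alpha)'\cap M(A)=\C$, so $\psi(\pi)_1=\lambda w$ for some $\lambda\in\T$; but $w\in M(A^\alpha)=M(A)^\alpha$ is $\alpha$-invariant, whence $\pi(g)\psi(\pi)_1=\alpha_g(\psi(\pi)_1)=\psi(\pi)_1$ and $\pi=1$, a contradiction. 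As $A^\alpha$ is simple, Kishimoto's theorem \cite[Lemma 1.1]{K81} promotes outerness to proper outerness. With both cases settled, every $\calpha_\pi$ with $\pi\neq 1$ is properly outer, and Lemma~\ref{predual action} gives that $\alpha$ is quasi-product.
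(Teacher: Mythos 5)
Your proposal is correct and follows essentially the same route as the paper: reduce to the setting where the sector machinery applies (the paper compresses to a corner $eAe$ with $[e]_0=0$ to land in $\cC_2$, while you stabilize to land in $\cC_1$ — a cosmetic difference), then apply Corollary \ref{pipo} to the irreducible endomorphisms $\calpha_\pi$ with $d(\pi)>1$, handle the characters via outerness from minimality plus Kishimoto's lemma, and conclude with Lemma \ref{predual action}. Your explicit verification that $\calpha_\pi$ is outer for $d(\pi)=1$ spells out what the paper leaves as a one-line remark.
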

\begin{proof}
We choose a non-zero projection $e\in A^\alpha$ with $[e]_0=0$ in $K_0(A^\alpha)$. 
Then the inclusion $A\supset A^\alpha$ is isomorphic to $eAe\otimes \K\supset eA^\alpha e\otimes \K$ 
if $A^\alpha$ is not unital, and to a corner inclusion of it if $A^\alpha$ is unital. 
Thus $A\supset A^\alpha$ has the property (BEK) if and only if $eAe\supset eA^\alpha e$ has the same property, 
and we may and do assume that $A^\alpha\in \cC_2$ to prove the statement.  

Theorem \ref{pipo} implies that the endomorphisms $\calpha_\pi$, $\pi\in \hG\setminus\{1\}$, $d(\pi)>1$, are  
properly outer. 
Assume that $\pi\in \hG\setminus\{1\}$ has $d(\pi)=1$. 
Note that $\calpha_\pi$ is an automorphism and it is outer thanks to the irreducibility of $A\supset A^\alpha$. 
Thus it is properly outer by Kishimoto's theorem \cite[Lemma 1.1]{K81}. 
\end{proof}

\begin{remark}\label{universality} 
When $A^\alpha$ belongs to $\cC_2$, we can apply Doplicher-Roberts' construction \cite[Theorem 5.1]{DR89I} to 
$\cA=A^\alpha$, $\Delta=\{\calpha_\cH\}_{\cH\in \cU(G,\alpha)}$, and 
$\varepsilon(\calpha_{\cH_1},\calpha_{\cH_2})=\theta(\cH_1,\cH_2)$, and we obtain the universal C$^*$-algebra $\cB$ containing 
$\cA$ with a $G$-action $\talpha$ satisfying the following properties (1)-(4): 
\begin{itemize}  
\item[(1)] $\cA=\cB^{\talpha}$. 
\item[(2)] For each $\cH\in \cU(G,\alpha)$, there exists a copy $\tH\in \cU(G,\talpha)$, as a $G$-space, of $\cH$ such that   
$\cB$ is generated by $\cA$ and $\cup_{\cH\in \cU(G,\alpha)}\tH$.  
\item[(3)] The Hilbert space $\tH$ implements $\calpha_\cH$ in the sense that $Vx=\calpha_\cH(x)V$ holds for all 
$V\in \tH$ and $x\in \cA$. 
\item[(4)] $\varepsilon(\calpha_{\cH_1},\calpha_{\cH_2})=\theta(\tH_1,\tH_2)$.  
\end{itemize}
Moreover the inclusion $\cB\supset \cA$ is irreducible. 

By the universality of $\cB$, there exists a $G$-equivariant surjection from $\cB$ onto $A$ that is the identity on $A^\alpha$. 
On the other hand, we can apply Corollary \ref{main1pi} to $\cB$ and see that $\cB$ is simple. 
Therefore the map from $\cB$ to $A$ is an isomorphism. 
This means that $A$ has universality with respect to (1)-(4), which we will use in the proof of Theorem \ref{main1} in the general case.  
\end{remark}

As a consequence of Corollary \ref{main1pi}, we get the following lemma. 
Recall that if $\beta$ is an ergodic action of $G$ on a C$^*$-algebra $B$, i.e. $B^\beta=\C$, the C$^*$-algebra $B$ 
is necessarily nuclear (see \cite[Lemma 22]{WII}, \cite[Proposition 3]{DLRZ02}).

\begin{lemma}\label{ergodic} 
Let the notation be as in the assumption of Theorem \ref{main1}.   
Let $\beta$ be an ergodic action of $G$ on a C$^*$-algebra $B$, and let $\gamma$ be the diagonal action 
$\gamma_g=\alpha_g\otimes \beta_g$ of $G$ on $A\otimes B$. 
Then the fixed point algebra $(A\otimes B)^\gamma$ is simple. 
More over there exists a conditional expectation $F:(A\otimes B)^\gamma\to A^\alpha\otimes \C$. 
\end{lemma}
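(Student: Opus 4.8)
The plan is to handle the conditional expectation and the simplicity assertion separately, reducing the latter to the purely infinite case supplied by Corollary \ref{main1pi}.

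First I would produce $F$. Since $\beta$ is ergodic, $B$ carries a unique $\beta$-invariant state $\tau$, which is moreover a faithful trace. The slice map $\id_A\otimes\tau:A\otimes B\to A$ is a conditional expectation onto $A\otimes\C$, and it is $G$-equivariant: for $a\in A$, $b\in B$ one has $(\id_A\otimes\tau)(\gamma_g(a\otimes b))=\alpha_g(a)\tau(\beta_g(b))=\alpha_g(a)\tau(b)=\alpha_g((\id_A\otimes\tau)(a\otimes b))$ by $\beta$-invariance of $\tau$. Hence $\id_A\otimes\tau$ carries $(A\otimes B)^\gamma$ into $A^\alpha$ and fixes $A^\alpha\otimes\C$, so its restriction $F$ is a conditional expectation onto $A^\alpha\otimes\C$, faithful because $\tau$ is. This settles the ``moreover'' part in full generality, independently of any purely infinite hypothesis.

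For simplicity I would first reduce to the case where $A^\alpha$ is purely infinite. Equip $\cO_\infty$ with the trivial $G$-action and set $\tilde A=A\otimes\cO_\infty$, $\tilde\alpha=\alpha\otimes\id$; then $\tilde\alpha$ is again faithful and minimal (the relative commutant computation is immediate as $\cO_\infty$ has trivial center), and $\tilde A^{\tilde\alpha}=A^\alpha\otimes\cO_\infty$ is purely infinite simple. Rearranging tensor factors gives a $G$-equivariant isomorphism $\tilde A\otimes B\cong A\otimes B\otimes\cO_\infty$ carrying $\tilde\alpha\otimes\beta$ to $\gamma\otimes\id$, whence $(\tilde A\otimes B)^{\tilde\alpha\otimes\beta}\cong (A\otimes B)^\gamma\otimes\cO_\infty$. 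Since $\cO_\infty$ is simple and nuclear, $(A\otimes B)^\gamma$ is simple as soon as $(A\otimes B)^\gamma\otimes\cO_\infty$ is, so it suffices to prove the lemma when $A^\alpha$ is purely infinite; passing to the full corner $(e\otimes1)(A\otimes B)^\gamma(e\otimes1)$ with $e\in A^\alpha$ a projection with $[e]_0=0$ (as in the proof of Corollary \ref{main1pi}) lets me assume $A^\alpha\in\cC_2$.

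In this situation I would make the structure of $D:=(A\otimes B)^\gamma$ explicit. By Lemma \ref{equivalence1} each $\pi\in\hG$ is realised by a Hilbert space $\cH_\pi\subset M(A)$ of support $1$, and for $y=(y_i)\in B^\beta_1(\bpi)$ the element $W_{\pi,y}=\sum_i\psi(\pi)_i\otimes y_i\in M(A\otimes B)$ is $\gamma$-invariant, satisfies $W_{\pi,y}^*W_{\pi,y}=(\sum_i\tau(y_i^*y_i))\,1$ (as $B^\beta=\C$), and obeys $W_{\pi,y}(x\otimes1)=(\calpha_\pi(x)\otimes1)W_{\pi,y}$ for $x\in A^\alpha$. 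Together with $A^\alpha\otimes\C$ these densely span $D$, exhibiting $D$ as a generalised crossed product of the simple algebra $A^\alpha$ by the family $\{\calpha_\pi\}_{\pi\in\hG}$. Given a nonzero ideal $I\subset D$ and $0\ne z\in I_+$, faithfulness of $F$ gives $F(z)\ne0$; proper outerness of $\calpha_\pi$ for $\pi\in\hG\setminus\{1\}$ (Corollary \ref{pipo}, with Kishimoto's theorem \cite{K81} covering the $d(\pi)=1$ automorphisms) then lets me cut $z$ down by positive contractions from $A^\alpha$ so as to suppress the spectral components with $\pi\ne1$ while keeping the $\pi=1$ component bounded below, yielding a nonzero element of $I\cap A^\alpha$. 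Simplicity of $A^\alpha$ forces $A^\alpha\subset I$, and since an approximate unit of $A^\alpha$ is a left approximate unit for $D$ this gives $I=D$.

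The main obstacle is this last cut-down estimate. The spectral expansion of $z$ ranges over the infinitely many $\pi\in\hG$, so the proper-outerness argument must be combined with a Fej\'er/Ces\`aro truncation (as in Theorem \ref{Fejer}) to reduce to finitely many sectors, and the bookkeeping needed to keep the $\pi=1$ part bounded below throughout is most cleanly carried out in an ultraproduct, exactly in the spirit of the proof of Theorem \ref{purely infinite}. Once this estimate is secured the remaining steps are routine.
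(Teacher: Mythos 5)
Your proposal is correct and follows essentially the same route as the paper: the same slice-map conditional expectation $F$, the same $\cO_\infty$-tensoring and corner reduction to $A^\alpha\in\cC_2$, and the same presentation of $(A\otimes B)^\gamma$ as a crossed-product-type algebra generated by $A^\alpha\otimes\C$ and the $\gamma$-invariant isometries $\sum_i\psi(\pi)_i\otimes y_i$ built from the spectral subspaces of the ergodic action, which intertwine $\calpha_\pi$. The only discrepancy is your closing worry, which is unnecessary: the paper settles the cut-down estimate by invoking the standard simplicity argument for crossed products by discrete groups (norm-approximate an element of the ideal by a finite sum of spectral components using the established density, then apply proper outerness of the finitely many $\calpha_\pi$ involved, as in \cite[Theorem 3.1]{K81}), with no ultraproduct or Fej\'er truncation required.
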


\begin{proof} Note that the restriction $F$ of $E_\alpha\otimes E_\beta$ to $(A\otimes B)^\gamma$ is 
a faithful conditional expectation from $(A\otimes B)^\gamma$ onto $A^\alpha\otimes \C$. 

Replacing $A$ and $\alpha$ with $A\otimes \cO_\infty$ and $\alpha\otimes \id_{\cO_\infty}$ respectively, 
we may assume that $A^\alpha$ is purely infinite. 
Moreover, passing to the corner by a projection in $A^\alpha$, we may and do assume that $A^\alpha\in \cC_2$.  

We first recall basic facts about ergodic actions (see \cite[Proposition 2.1,Theorem 4.1]{HLS}, \cite[Theorem 1]{WasI}). 
The conditional expectation $E_\beta:B\to B^\beta=\C$ is a trace, and we denote it by $\tau$ for simplicity. 
There exist non-negative integers $m_\pi\leq d(\pi)$ with $\dim B^\beta(\pi)=m_\pi d(\pi)$ for $\pi\in \hG$. 
We can choose $X(\pi)_a\in B^\beta_1(\pi)$, $a=1,2,\cdots,m_\pi$ such that $\{X(\pi)_{ai}\}_{a,i}$ form an orthonormal 
basis of $B^\beta(\pi)$ with respect to the inner product given by $\tau$. 
Then we have 
\[\sum_{i=1}^{d(\pi)}X(\pi)_{ai}X(\pi)_{bi}^*=\delta_{a,b}d(\pi).\]

Let 
\[W(\pi)_a=\frac{1}{\sqrt{d(\pi)}}\sum_{i=1}^{d(\pi)}\psi(\pi)_i\otimes X(\pi)_{ai}^*.\]
Then $W(\pi)_a$, $a=1,2,\ldots,m_\pi$, are isometries in $(A\otimes B)^\gamma$ with mutually orthogonal ranges.  
Since the linear span of $\cup_{\pi_1,\pi_2}A^\alpha(\pi_1)\otimes B^\beta(\pi_2)$ is dense in $A\otimes B$, 
the linear span of $\cup_{\pi_1,\pi_2}E_\gamma(A^\alpha(\pi_1)\otimes B^\beta(\pi_2))$ is dense in $(A\otimes B)^\gamma$. 
On the other hand, we have $A^\alpha(\pi)=A^\alpha\cH_\pi$, and $E_\gamma(A^\alpha(\pi_1)\otimes B^\beta(\pi_2))$ survives 
only when $\pi_2$ is equivalent to $\overline{\pi_1}$. 
Thus we can see that the linear span of $\cup_{\pi,a}(A^\alpha\otimes 1)W(\pi)_a$ is dense in $(A\otimes B)^\gamma$. 
Note that 
\[W(\pi)_a(x\otimes 1)=(\calpha_\pi(x)\otimes 1) W(\pi)_a,\]
holds for all $x\in A^\alpha$. 
Since $F$ is faithful and $\calpha_\pi$, $\pi\in \hG\setminus\{1\}$, are all properly outer, we can conclude that $(A\otimes B)^\gamma$ 
is simple as in the case of a crossed product by a discrete group.   
\end{proof}

We proceed to the proof of Theorem 1.1 in the general case, which requires several reduction steps. 
Since the inclusion $A\supset A^\alpha$ has the property (BEK) if and only if $A\otimes \K\supset A^\alpha\otimes \K$ 
has the same property, we may assume $A^\alpha\in \cC_1$ to prove Theorem \ref{main1}. 
Our task is to show that $\calpha_\pi$ is properly outer for $\pi\in \hG$ with $d(\pi)>1$. 

We fix $\pi_0\in \hG$ with $d(\pi_0)>1$ and assume on the contrary that $\calpha_{\pi_0}$ is not properly outer. 
Then the proof of \cite[Theorem 7.5]{I02}, which is an adaptation of Kishimoto's argument in \cite{K81} to the endomorphism case, 
shows that there would exist $b\in M(A^\alpha)$, a non-zero hereditary 
C$^*$-subalgebra $C$ of $A^\alpha$, 
and $\delta>0$ such that for any irreducible representation $(\Pi,H)$ of $A^\alpha$, there exists a finite rank self-adjoint operator 
$T$ on $H$ and isometry $V\in \Hom_A(\Pi,\Pi\circ \calpha_{\pi_0})$ satisfying 
\[\Pi(c)^*(\Pi(b)V+V^*\Pi(b)^*+T)\Pi(c)\geq \delta \Pi(c^*c),\]
for all $c\in C$.  
Let $\cL_\pi=\Hom_{A^\alpha}(\Pi,\Pi\circ \calpha_\pi)$, which is a finite dimensional Hilbert space in $\B(H)$. 
Let $D$ be the C$^*$-algebra generated by $\cup_{\pi\in \hG}\Pi(A^\alpha)\cL_\pi$. 
As was observed in the proof of \cite[Theorem 7.5]{I02}, if $D$ is simple and there exists a conditional expectation $F$ from 
$D$ onto $\Pi(A^\alpha)$, we can get a contradiction. 
Therefore Theorem \ref{main1} is reduced to the following lemma: 

\begin{lemma}\label{sim-con1} Under the assumption in Theorem \ref{main1}, we assume $A^\alpha\in \cC_1$. 
Let $(\Pi,H)$ be an irreducible representation of $A^\alpha$, let $\cL_\pi=\Hom_{A^\alpha}(\Pi,\Pi\circ \calpha_\pi)$ for 
$\pi\in \hG$, and let $D$ be the C$^*$-algebra generated by $\cup_{\pi\in \hG}\Pi(A^\alpha)\cL_\pi$. 
Then $D$ is simple and there exists a conditional expectation from $D$ onto $\Pi(A^\alpha)$.  
\end{lemma}

For simplicity, we suppress the symbol $\Pi$ in $\Pi(A^\alpha)$, and treat $A^\alpha$ as a subalgebra of $\B(H)$ irreducibly 
acting on $H$. 
For each $\pi\in \hG$, we choose an orthonormal basis $\{V(\pi)_p\}_{p=1}^{m_\pi}$ of $\cL_\pi$.  
Note that we have $m_1=1$, and we can choose $V(1)_1=1$. 
Then as in \cite[Lemma 7.1]{I02}, we can show that there exist 
$C_{(\pi,p),(\sigma,q)}^{(\mu,r)}\in (\calpha_\mu,\calpha_\pi\calpha_\sigma)$ and 
$c_{(\pi,p),(\overline{\pi},q)}\in \C$ satisfying 
\begin{equation}\label{C1}
V(\pi)_pV(\sigma)_q=\sum_{\mu,r}C_{(\pi,p),(\sigma,q)}^{(\mu,r)}V(\mu)_r,
\end{equation}
\begin{equation}\label{C2}
V(\pi)_p^*=\sum_{q}c_{(\pi,p),(\bpi,q)}R_\pi^*V(\bpi)_q. 
\end{equation}
Let $D_0$ be the linear span of $\cup_{\pi\in \hG}A^\alpha\cL_\pi$. 
Then the above relations show that $D_0$ is a dense $*$-subalgebra of $D$, and every approximate unit of $A^\alpha$ is an approximate unit of $D$. 
In particular, we have $\overline{A^\alpha D}^{\|\cdot \|}=D$. 

We prove Lemma \ref{sim-con1} by using Doplicher-Roberts' endomorphism crossed product in \cite{DR89I}. 
As it is formulated only for unital C$^*$-algebras, we show that Lemma \ref{sim-con1} is reduced to the same statement 
with $A^\alpha\in \cC_2$. 
 
We first show that Lemma \ref{sim-con1} is reduced to the case where $A^\alpha$ is purely infinite. 
To see it, we fix an irreducible representation $(\Pi_1,H_1)$ of $\cO_\infty$, and replace $A$ and $(\Pi,H)$ 
with $A\otimes \cO_\infty$ and $(\Pi\otimes \Pi_1,H\otimes H_1)$ respectively. 
If Lemma \ref{sim-con1} holds for purely infinite $A^\alpha\in \cC_1$, we see that $D\otimes \cO_\infty$ is simple and there exists 
a conditional expectation from $D\otimes \cO_\infty$ onto $A^\alpha\otimes \cO_\infty$.  
This implies that $D$ is simple and there exists a conditional expectation from $D$ onto $A^\alpha$. 

Next we show that Lemma \ref{sim-con1} for purely infinite $A^\alpha\in \cC_1$ is reduced to the same statement for 
$A^\alpha\in \cC_2$. 
We choose a system of matrix units $\{e_{i,j}\}_{i,j\geq 1}$ in $A^\alpha$ such that $[e_{11}]_0=0$ in $K_0(A^\alpha)$ and 
$\{\sum_{i=1}^n e_{ii}\}_{n=1}^\infty$ is an approximate unit of $A^\alpha$, and hence converges to 1 in the strict topology.  
We denote $e=e_{11}$ and $p_n=\sum_{i=1}^n e_{ii}$ for simplicity. 
Note that $e$ is a full projection of $D$ because $\{p_n\}_{n=1}^\infty$ is an approximate unit of $D$ too. 
Note that $\{\calpha_\pi(p_n)\}_{n=1}^\infty$ is an approximate unit of $A^\alpha$ as $\calpha_\pi\in \End(A^\alpha)_0$. 

For each $\pi\in \hG$, we choose a partial isometry $v_\pi\in A^\alpha$ satisfying $v_\pi v_\pi^*=e$ and 
$v_\pi^*v_\pi=\calpha_\pi(e)$. 
Then 
\[U_\pi=\sum_{i=1}^\infty e_{i1}v_\pi\calpha_\pi(e_{1i})\]
converges to a unitary in $M(A^\alpha)$ in the strict topology, and $\Ad U_\pi\cdot \calpha_\pi(e_{ij})=e_{ij}$. 
Replacing $\cH_\pi$ with $U_\pi\cH_\pi$ implies replacing $\cL_\pi$ with $U_\pi\cL_\pi$, which does not change $D$. 
Thus we may do assume $\calpha_\pi(e_{ij})=e_{ij}$, and
\[(D\supset A^\alpha,\calpha_\pi)\cong 
(eDe\otimes \K\supset eA^\alpha e\otimes \K,\calpha_\pi|_{eA^\alpha e}\otimes \id_\K).\]
Therefore if the same statement as in Lemma \ref{sim-con1} for $A^\alpha\in \cC_2$ holds, we can see that 
$eDe$ is simple and there exists a conditional expectation from $eDe$ onto $eA^\alpha e$, 
which shows that Lemma \ref{sim-con1} holds. 

Now Theorem \ref{main1} is reduced to the following lemma: 
 
\begin{lemma}\label{sim-con2} Under the assumption in \ref{main1}, we assume $A^\alpha\in \cC_2$. 
Let $(\Pi,H)$ be an irreducible representation of $A^\alpha$, let $\cL_\pi=\Hom_{A^\alpha}(\Pi,\Pi\circ \calpha_\pi)$ for 
$\pi\in \hG$, and let $D$ be the C$^*$-algebra generated by $\Pi(A^\alpha)$ and $\cup_{\pi\in \hG}\cL_\pi$. 
Then $D$ is simple and there exists a conditional expectation from $D$ onto $\Pi(A^\alpha)$.  
\end{lemma}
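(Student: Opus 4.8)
The plan is to realize $D$ as (a corner of) a Doplicher-Roberts endomorphism crossed product and then invoke the purely infinite case already proved. Recall that in Remark \ref{universality} the Doplicher-Roberts construction applied to $\cA=A^\alpha$, the family $\Delta=\{\calpha_\cH\}_{\cH\in \cU(G,\alpha)}$, and the symmetry $\varepsilon(\calpha_{\cH_1},\calpha_{\cH_2})=\theta(\cH_1,\cH_2)$ yields a universal C$^*$-algebra $\cB$ with a $G$-action $\talpha$, and $\cB$ is isomorphic to $A$ itself. First I would observe that the relations \eqref{C1} and \eqref{C2} satisfied by the intertwiner basis $\{V(\pi)_p\}$ are exactly the relations governing the generators of this endomorphism crossed product: the products $V(\pi)_pV(\sigma)_q$ decompose over the fusion rules via intertwiners in $(\calpha_\mu,\calpha_\pi\calpha_\sigma)$, and the adjoints are expressed through the conjugation data $R_\pi$. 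Since we are now in the class $\cC_2$, every object of $\cU(G,\alpha)$ is a finite direct sum of simple objects and the full tensor category $\cU(G,\alpha)$ is available, so the Doplicher-Roberts machinery applies verbatim with $A^\alpha$ unital.

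The key structural step is to identify $D$ with the image of a $*$-homomorphism from $\cB$ (equivalently from $A$) determined by $\Pi$. Concretely, the assignment $x\mapsto \Pi(x)$ on $A^\alpha$ together with $\tH_\pi \ni V \mapsto$ an appropriate element of $\cL_\pi$ should extend, by the universality recorded in Remark \ref{universality}, to a $G$-equivariant (after transporting the action) $*$-homomorphism whose image is precisely $D$. Here the point is that $\cL_\pi=\Hom_{A^\alpha}(\Pi,\Pi\circ\calpha_\pi)$ carries the $G$-representation $\pi$ and that the multiplication and adjoint rules \eqref{C1}, \eqref{C2} match the defining relations of the generators $\tH_\pi$ in $\cB$. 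Once $D$ is exhibited as a quotient of the simple C$^*$-algebra $A$ (simplicity of $A$ being guaranteed by Corollary \ref{main1pi} in the purely infinite case), the map is automatically injective, hence $D\cong A$ is simple.

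For the conditional expectation, I would transport $E_\alpha\colon A\to A^\alpha$ through this isomorphism. The expectation $E_\alpha=P_1$ projects onto the trivial spectral subspace, and under the identification $D\cong A$ it restricts to a conditional expectation from $D$ onto $\Pi(A^\alpha)$; concretely it is the map killing all $\cL_\pi$ with $\pi\neq 1$ and fixing $A^\alpha$, which is well-defined and faithful by the averaging construction of $P_1$. This simultaneously yields both conclusions of the lemma.

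The main obstacle I expect is verifying that the structure constants $C_{(\pi,p),(\sigma,q)}^{(\mu,r)}$ and $c_{(\pi,p),(\overline{\pi},q)}$ coming from the \emph{concrete} intertwiner spaces $\cL_\pi$ in $\B(H)$ genuinely coincide with the \emph{abstract} categorical data $\theta$ and $R_\pi$ built into $\cB$, so that the universal map is well-defined rather than merely a plausible matching of symbols. Because $\Pi$ is only required to be an irreducible representation of $A^\alpha$ (not faithful in any a priori compatible way with the tensor category), one must check that $\dim\cL_\pi=m_\pi$ agrees with the multiplicities predicted by the category and that the cocycle-type identities are preserved; this is the analogue of the compatibility computation in \cite[Lemma 7.1]{I02}, and carrying it out carefully — ensuring the symmetry $\varepsilon$ is respected and that no projective phase obstructs the extension — is where the real work lies.
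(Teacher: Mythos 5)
Your strategy founders on a point you flag at the end but treat as a routine verification: the identification of $D$ with the Doplicher--Roberts algebra built from $\cA=A^\alpha$ and $\Delta=\{\calpha_\cH\}$ (i.e.\ with $A$ itself, by Remark \ref{universality}) is false in general. The Hilbert space $\cH_\pi\subset M(A)$ has dimension $d(\pi)$ and support $1$, whereas $\cL_\pi=\Hom_{A^\alpha}(\Pi,\Pi\circ \calpha_\pi)$ has dimension $m_\pi$, the multiplicity of $\Pi$ inside $\Pi\circ\calpha_\pi$, which is in general strictly smaller than $d(\pi)$; correspondingly the support $\sum_p V(\pi)_pV(\pi)_p^*$ is the projection onto the $\Pi$-isotypic component of $\Pi\circ\calpha_\pi$ and need not be $1$. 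Hence there is no assignment $\tH_\pi\ni V\mapsto \cL_\pi$ compatible with the relations of Remark \ref{universality}, no surjection $A\to D$ extending $\Pi$ on $A^\alpha$, and no isomorphism $D\cong A$. (Equality $m_\pi=d(\pi)$ for all $\pi$ is exactly the full-multiplicity case, where $D$ would indeed be a copy of $A$; the whole difficulty of the lemma is the general case.) The conditional expectation ``killing $\cL_\pi$ for $\pi\neq 1$'' is likewise precisely what must be shown to be well defined and positive, and cannot be transported from $E_\alpha$ without the (false) isomorphism. The compatibility check you defer to the end is therefore not the real work --- it is an obstruction that defeats the route.

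The paper's argument is structured differently in a way that matters. It introduces the \emph{universal} C$^*$-algebra $\cD$ on generators $V'(\pi)_p$, $p=1,\dots,m_\pi$, subject to the concrete relations (\ref{R1})--(\ref{R4}) carrying the structure constants of $\cL_\pi$; it then applies Doplicher--Roberts with $\cA=\cD$ (not $A^\alpha$) and the extended endomorphisms $\calpha'_\cH$, producing a larger algebra $\cB\supset\cD$ with a $G$-action $\alpha'$ such that $\cB^{\alpha'}=\cD$ and $\cB$ is generated by $A$ together with a commuting nuclear subalgebra $B$ spanned by the elements $Y(\pi)_{pi}=V'(\pi)_p^*\psi(\pi)_i$, on which $\alpha'$ restricts to an \emph{ergodic} action. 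This identifies $\cD\cong (A\otimes B)^{\alpha\otimes\beta}$, and Lemma \ref{ergodic} then delivers simplicity and the conditional expectation; universality finally gives $\cD\cong D$. The ergodic system $(B,\beta)$ is exactly the module-category datum attached to $\Pi$ (Remark \ref{MC}), and it is the object your proposal has no room for: any repair of your approach must build this auxiliary ergodic action into the picture from the start rather than hoping the multiplicities coincide.
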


\begin{remark}\label{MC} 
Let $\Rep(A^\alpha)_0$ be the category representations of $A^\alpha$ unitarily equivalent to 
finite direct sums of irreducible representations of $A^\alpha$. 
Then Lemma \ref{finite} shows that $\Rep(A^\alpha)_0$ is a module category of $\cU(G,\alpha)$ via 
$\Pi\otimes \cH:=\Pi\circ \calpha_\cH$. 
It is known that there is a correspondence between module categories of $\cU(G)$ and ergodic $G$-actions 
(see \cite[Theorem 6.4]{CY13}). 
In fact, we prove Lemma \ref{sim-con2} by describing the structure of $D$ in terms of an ergodic $G$-action. 
\end{remark}

From now on, we assume $A^\alpha\in \cC_2$. 
We choose an orthonormal basis $\{V(\pi)_p\}_p$ of $\cL_\pi$ for each $\pi\in \hG$. 
Then there exists $C_{(\pi,p),(\sigma,q)}^{(\mu,r)}\in (\calpha_\mu,\calpha_\pi\calpha_\sigma)$ 
and $c_{(\pi,p),(\overline{\pi},q)}\in \C$ satisfying Eq.(\ref{C1}) and (\ref{C2}).  

Instead of working on $D$ directly, we first consider its universal counterpart. 
Let $\cD$ be the universal C$^*$-algebra generated by $A^\alpha$ and isometries $V'(\pi)_p$, 
$\pi\in \hG$, $p=1,2,\cdots,m_\pi$, satisfying the following relations:
\begin{equation}\label{R1}
V'(\pi)_p^*V'(\pi)_q=\delta_{p,q},
\end{equation}
\begin{equation}\label{R2}
V'(\pi)_px=\calpha_\pi(x)V'(\pi)_p,\quad x\in A^\alpha, 
\end{equation}
\begin{equation}\label{R3}
V'(\pi)_pV'(\sigma)_q=\sum_{\mu,r}C_{(\pi,p),(\sigma,q)}^{(\mu,r)}V'(\mu)_r,
\end{equation}
\begin{equation}\label{R4}
V'(\pi)_p^*=\sum_{q}c_{(\pi,p),(\bpi,q)}R_\pi^*V'(\bpi)_q. 
\end{equation}

\begin{lemma} Let the notation be as above. 
\begin{itemize}
\item[$(1)$] For each $\cH\in \cU(G,\alpha)$, there exists $\calpha'_\cH\in \End(\cD)$ satisfying $\calpha'_\cH|_{A^\alpha}=\calpha_\cH$, 
and $\calpha'_\cH(V'(\pi)_p)=\theta(\pi,\cH)V'(\pi)_p$. 
\item[$(2)$] We set $\calpha'_T=T$ for $T\in (\cH_1,\cH_2)_G$. 
Then $\calpha'$ is a tensor functor from $\cU(G,\alpha)$ to $\End(\cD)$. 
\end{itemize}
\end{lemma}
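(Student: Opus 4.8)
The plan is to construct each $\calpha'_\cH$ through the universal property of $\cD$ and then to verify the tensor-functoriality of $\calpha'$ on the generating set, reducing every step to the symmetry identities \eqref{P1}--\eqref{P3} for $\theta$ together with the commutation relation \eqref{R2}.

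For part $(1)$, since $\cD$ is the universal C$^*$-algebra for the relations \eqref{R1}--\eqref{R4}, it suffices to exhibit images of the generators satisfying those relations, with $A^\alpha$ mapped by $\calpha_\cH$. I would set $x\mapsto \calpha_\cH(x)$ for $x\in A^\alpha$ and $V'(\pi)_p\mapsto \theta(\pi,\cH)V'(\pi)_p$. As $\theta(\pi,\cH)\in M(A^\alpha)$ is unitary, \eqref{R1} is immediate, while \eqref{R2} follows from the intertwining property of $\theta(\cH_\pi,\cH)\in(\calpha_{\cH_\pi\cH},\calpha_{\cH\cH_\pi})$. For \eqref{R3} the point is that $\theta(\pi,\cH)\calpha_\pi(\theta(\sigma,\cH))=\theta(\cH_\pi\cH_\sigma,\cH)$ by \eqref{P2}; then applying \eqref{P3} to $C^{(\mu,r)}_{(\pi,p),(\sigma,q)}\in(\cH_\mu,\cH_\pi\cH_\sigma)_G$ with braiding object $\cH$ gives $\calpha_\cH(C^{(\mu,r)}_{(\pi,p),(\sigma,q)})\theta(\mu,\cH)=\theta(\cH_\pi\cH_\sigma,\cH)C^{(\mu,r)}_{(\pi,p),(\sigma,q)}$, which is exactly the identity matching the two sides of \eqref{R3}. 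The universal property then yields the $*$-endomorphism $\calpha'_\cH\in\End(\cD)$.

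The delicate point is \eqref{R4}, which encodes the $*$-structure through the conjugates $\bpi$ and the isometries $R_\pi$. After moving $\theta(\pi,\cH)^*$ past $V'(\bpi)_q$ by \eqref{R2}, I would reduce the equality of the two sides to the single identity $\calpha_\cH(R_\pi)=\theta(\bpi,\cH)\calpha_{\bpi}(\theta(\pi,\cH))R_\pi$. The right-hand side equals $\theta(\cH_{\bpi}\cH_\pi,\cH)R_\pi$ by \eqref{P2}, and since $R_\pi\in(\id,\calpha_{\bpi}\calpha_\pi)$ is a morphism out of the trivial object $\C1$, applying \eqref{P3} with $\theta(\C1,\cH)=1$ (read off from the explicit formula for $\theta$) gives precisely $\calpha_\cH(R_\pi)=\theta(\cH_{\bpi}\cH_\pi,\cH)R_\pi$. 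I expect this verification to be the main obstacle, both because of the bookkeeping with the conjugates and because one must justify comparing the coefficients of the independent isometries $V'(\bpi)_q$.

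For part $(2)$, everything is checked on $A^\alpha\cup\{V'(\pi)_p\}$, the compatibility of $\calpha'$ with the tensor product of morphisms then being formal. Functoriality on morphisms means that $T\in(\cH_1,\cH_2)_G$ intertwines $\calpha'_{\cH_1}$ and $\calpha'_{\cH_2}$: on $A^\alpha$ this is the established equality $(\cH_1,\cH_2)_G=(\calpha_{\cH_1},\calpha_{\cH_2})$, and on $V'(\pi)_p$ it reduces by \eqref{R2} to the naturality $\theta(\pi,\cH_2)\calpha_\pi(T)=T\theta(\pi,\cH_1)$, which I would derive from \eqref{P3} and \eqref{P1} using that $\theta$ is unitary. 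Tensor multiplicativity $\calpha'_{\cH_1\cH_2}=\calpha'_{\cH_1}\circ\calpha'_{\cH_2}$ is clear on $A^\alpha$ from $\calpha_{\cH_1\cH_2}=\calpha_{\cH_1}\circ\calpha_{\cH_2}$, and on $V'(\pi)_p$ reduces to the cocycle identity $\theta(\pi,\cH_1\cH_2)=\calpha_{\cH_1}(\theta(\pi,\cH_2))\theta(\pi,\cH_1)$, which follows by combining \eqref{P1} and \eqref{P2}. Assembling these identities produces the tensor functor $\calpha'\colon\cU(G,\alpha)\to\End(\cD)$.
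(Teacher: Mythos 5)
Your proposal is correct and follows essentially the same route as the paper: invoke the universal property of $\cD$, verify that the assignment preserves the relations (\ref{R1})--(\ref{R4}) via the identities (\ref{P1})--(\ref{P3}) (with (\ref{R4}) reducing to $\calpha_\cH(R_\pi)=\theta(\bpi\otimes\pi,\cH)R_\pi$ exactly as you indicate), and check naturality of $\theta$ for functoriality on morphisms. The only difference is that you also spell out the multiplicativity $\calpha'_{\cH_1\cH_2}=\calpha'_{\cH_1}\circ\calpha'_{\cH_2}$ via the cocycle identity, which the paper leaves implicit.
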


\begin{proof}(1) By the universality of $\cD$, it suffices to show that $\calpha'_\cH$ preserves the relations 
(\ref{R1})-(\ref{R4}). 
Since $\theta(\pi,\cH)$ is unitary, the relation (\ref{R1}) is preserved. 
We can see that (\ref{R2}) is preserved from  
\[\theta(\pi,\cH)V'(\pi)_p\calpha_\pi(x)=\theta(\pi,\cH)\calpha_\pi\calpha_\cH(x)V'(\pi)_p
=\calpha_\cH\calpha_\pi(x)\theta(\pi,\cH)V'(\pi)_p.\]
To see that (\ref{R3}) is preserved, it suffices to show 
\[\theta(\pi,\cH)V'(\pi)_p\theta(\sigma,\cH)V'(\sigma)_q=\sum_{\mu,r}\calpha_\cH(C_{(\pi,p),(\sigma,q)}^{(\mu,r)})
\theta(\mu,\cH)V'(\mu)_r.\]
The left-hand side is equal to 
\begin{align*}
\lefteqn{\theta(\pi,\cH)\calpha_\pi(\theta(\sigma,\cH))V'(\pi)_pV'(\sigma)_q
=\theta(\pi\otimes \sigma,\cH)\sum_{\mu,r}C_{(\pi,p),(\sigma,q)}^{(\mu,r)}V'(\mu)_r } \\
 &=\sum_{\mu,r}\left(\theta(\pi\otimes \sigma,\cH)C_{(\pi,p),(\sigma,q)}^{(\mu,r)}\theta(\cH,\mu)\right)\theta(\mu,\cH)V'(\mu)_r,\\
\end{align*}
which coincides with the right-hand side thanks to Eq.(\ref{P3}). 
Finally to see that the relation (\ref{R4}) is preserved, it suffices to show 
\[V'(\pi)_p^*\theta(\cH,\pi)=\sum_{q}c_{(\pi,p),(\bpi,q)}\calpha_\cH(R_\pi^*)\theta(\bpi,\cH)V'(\bpi)_q.\]
The left-hand side is 
\begin{align*}
\lefteqn{\sum_{q}c_{(\pi,p),(\bpi,q)}R_\pi^*V'(\bpi)_q\theta(\cH,\pi)
 =\sum_{q}c_{(\pi,p),(\bpi,q)}R_\pi^*\calpha_{\bpi}(\theta(\cH,\pi))V'(\bpi)_q} \\
 &=\sum_{q}c_{(\pi,p),(\bpi,q)}\left(R_\pi^*\calpha_{\bpi}(\theta(\cH,\pi))\theta(\cH,\bpi)\right)\theta(\bpi,\cH)V'(\bpi)_q. 
\end{align*}
Since Eq.(\ref{P2}) and(\ref{P3}) imply  
\[\theta(\bpi,\cH)\calpha_{\bpi}(\theta(\pi,\cH))R_\pi=\theta(\bpi\otimes\pi,\cH)R_\pi=\calpha_\cH(R_\pi),\]
the relation (\ref{R4}) is preserved, and $\calpha'_\cH$ is well-defined. 

(2) It suffices to show that $T\in (\cH_1,\cH_2)_G$ satisfies 
\[T\theta(\pi,\cH_1)V'(\pi)_p=\theta(\pi,\cH_2)V'(\pi)_pT.\] 
The right-hand side is 
\[\theta(\pi,\cH_2)\calpha_\pi(T)V'(\pi)_p=T\theta(\pi,\cH_1)V'(\pi)_p,\]
by Eq.(\ref{P3}), and the equality holds. 
\end{proof}

\begin{proof}[Proof of Lemma \ref{sim-con2}] 
We apply \cite[Theorem 5.1]{DR89I} to $\cA=\cD$, $\Delta=\{\calpha'_\cH\}_{\cH\in \cU(G,\alpha)}$, 
and $\varepsilon(\calpha'_{\cH_1},\calpha'_{cH_2})=\theta(\cH_1,\cH_2)$. 
Although \cite[Theorem 5.1]{DR89I} is stated under the condition that $\cA$ has trivial center, their construction 
itself works without this condition. 
Thanks to Remark \ref{universality}, the resulting algebra $\cB$ satisfies the following properties:  
\begin{itemize}
\item[(1)] The C$^*$-algebra $\cB$ is generated by $\cD$ and $A$ with a common C$^*$-subalgebra $A^\alpha$. 
\item[(2)] For every $\cH\in \cU(G,\alpha)$, the Hilbert space $\cH$ implements $\calpha'_\cH$.   
\item[(3)] There exists a $G$-action $\alpha'$ on $\cB$ extending $\alpha$ such that $\cB^{\alpha'}=\cD$. 
\end{itemize}

For $\pi \in \hG$, let $Y(\pi)_{pi}=V'(\pi)_p^*\psi(\pi)_i$. 
Then $Y(\pi)_a\in \cB^{\alpha'}(\pi)$. 
Since $\cD$ is generated by $A^\alpha$ and $\cup_{\pi\in \hG}\cL_\pi$, and $A$ is generated by $A^\alpha$ and 
$\cup_{\pi\in \hG}\cH_\pi$, the C$^*$-algebra $\cB$ is generated by $A$ and $\{Y(\pi)_{pi}\}_{p,i}$.  
We claim that $Y(\pi)_{pi}$ commutes with $A$. 
It commutes with $\cA^\alpha$ by construction. 
Since the linear span of $\cup_{\sigma\in \hG}A^\alpha\cH_\sigma$ is dense in $A$, it suffices to show that $Y(\pi)_{pi}$ 
commutes with $\psi(\sigma)_j$. 
Indeed, 
\begin{align*}
\lefteqn{\psi(\sigma)_jY(\pi)_{pi}=\calpha'_\sigma(V'(\pi)_p^*)\psi(\sigma)_j\psi(\pi)_i} \\
 &=V'(\pi)_p^*\theta(\sigma,\pi)\psi(\sigma)_j\psi(\pi)_i =V'(\pi)_p^*\psi(\pi)_i\psi(\sigma)_j=Y(\pi)_{pi}\psi(\sigma)_j, 
\end{align*}
which shows the claim. 

We next claim that the linear span $B_0$ of $\{Y(\pi)_{ai}\}_{\pi,a,i}$.is a $*$-algebra. 
Indeed, 
\begin{align*}
Y(\pi)_{pi}^*&=\psi(\pi)_i^*V'(\pi)_p=\sqrt{d(\pi)}R_\pi^*\psi(\bpi)_iV'(\pi)_p=\sqrt{d(\pi)}R_\pi^*\calpha'_{\bpi}(V'(\pi)_p)\psi(\bpi)_i  \\
 &=\sqrt{d(\pi)}R_\pi^*\theta(\pi,\bpi)V'(\pi)_p\psi(\bpi)_i =\sqrt{d(\pi)}\bR_\pi^*V'(\pi)_p\psi(\bpi)_i\\
 &=\sqrt{d(\pi)}\bR_\pi^*\sum_{q} \overline{c((\pi,p),(\bpi,q))}V'(\bpi)_q^*R_\pi\psi(\bpi)_i\\
  &=\sqrt{d(\pi)}\sum_{q} \overline{c((\pi,p),(\bpi,q))}V'(\bpi)_q^*\calpha_{\bpi}(\bR_\pi^*)R_\pi\psi(\bpi)_i\\
  &=\frac{1}{\sqrt{d(\pi)}}\sum_{q} \overline{c((\pi,p),(\bpi,q))}Y(\bpi)_{qi},\\
\end{align*}
shows that $B_0$ is closed under $*$. 
For the product, we have
\begin{align*}
\lefteqn{Y(\sigma)_{qj}Y(\pi)_{pi}=V'(\sigma)_q^*\psi(\sigma)_jV'(\pi)_p^*\psi(\pi)_i
 =V'(\sigma)_q^*\calpha'_\sigma(V'(\pi)_p^*)\psi(\sigma)_j\psi(\pi)_i}\\ 
 &=V'(\sigma)_q^*V'(\pi)_p^*\theta(\sigma,\pi)\psi(\sigma)_j\psi(\pi)_i
=\sum_{\mu,r}V'(\mu)_r^*{C_{(\pi,p),(\sigma,q)}^{(\mu,r)}}^*\psi(\pi)_i\psi(\sigma)_j\\
&=\sum_{\mu,r}\sum_{\nu,k}d(\nu)V'(\mu)_r^*{C_{(\pi,p),(\sigma,q)}^{(\mu,r)}}^*
E_\alpha(\psi(\pi)_i\psi(\sigma)_j\psi(\nu)_k^*)\psi(\nu)_k. 
\end{align*}
Since $C_{(\pi,p),(\sigma,q)}^{(\mu,r)}\in (\calpha_\mu,\calpha_{\pi}\calpha_\sigma)$ and 
$E_\alpha(\psi(\pi)_i\psi(\sigma)_j\psi(\nu)_k^*)\in (\calpha_\nu,\calpha_\pi\calpha_\sigma)$, 
the product ${C_{(\pi,p),(\sigma,q)}^{(\mu,r)}}^*E_\alpha(\psi(\pi)_i\psi(\sigma)_j\psi(\nu)_k^*)$ survives only 
if $\mu=\nu$, and 
\begin{align*}
Y(\sigma)_{qj}Y(\pi)_{pi}
&=\sum_{\mu,r,s}d(\mu)\inpr{E_\alpha(\psi(\pi)_i\psi(\sigma)_j\psi(\mu)_k^*)}{C_{(\pi,p),(\sigma,q)}^{(\mu,r)}}V'(\mu)_r^*\psi(\mu)_k \\
 &=\sum_{\mu,r,s}d(\mu)\inpr{E_\alpha(\psi(\pi)_i\psi(\sigma)_j\psi(\mu)_k^*)}{C_{(\pi,p),(\sigma,q)}^{(\mu,r)}}Y(\mu)_{rk}. 
\end{align*}
Therefore the claim is shown. 

Let $B$ be the closure of $B_0$ and let $\beta$ be the restriction of $\alpha'$ to $B$. 
Then $B^\beta(\pi)$ is the linear span of $\{Y(\pi)_{ai}\}_{a,i}$, and in particular we have $B^\beta=\C$, that is, 
the action $\beta$ is ergodic. 
Thus $B$ is nuclear. 
Since $A$ is simple and $\cB$ is generated by two mutually commuting C$^*$-subalgebras $A$ and $B$, we can identify $\cB$ with 
$A\otimes B$. 
Thus Lemma \ref{ergodic} implies that $\cD=\cB^{\alpha'}$ is simple and there exists a conditional expectation from $\cD$ onto $A^\alpha$. 
By the universality of $\cD$, there exists an isomorphism from $\cD$ onto $D$ that is the identity on $A^\alpha$. 
Thus Lemma \ref{sim-con2} is shown. 
\end{proof}

\section{Fej\'er type approximation for compact groups}
In this section, we assume that $G$ is a second countable compact group, and we use the notation in Section 2.4. 
We always use the normalized Haar measure of $G$. 
We equip $C(G)$ with a $G$-action by the left translation $L_g(f)(h)=f(g^{-1}h)$ 
for $f\in C(G)$. 
We denote by $\lambda$ the left regular representation of $G$. 
For $f\in C(G)$, we denote by $M_f\in \B(L^2(G))$ the multiplication operator by $f$. 

We first establish a $G$-equivariant CPAP of $C(G)$, 
which may be interpreted as a Fej\'er type approximation. 
The orthogonality relation shows that we have a completely 
orthonormal system $\{\sqrt{d(\pi)}\pi_{ij}\}_{\pi\in \hG,\; 1\leq i,j\leq d(\pi)}$ of $L^2(G)$. 
We denote by 
\[\{E_{\sqrt{d(\pi)}\pi_{ij},\sqrt{d(\sigma)}\sigma_{kl}}\}_{\pi,\sigma\in \hG,\;1\leq i,j\leq d(\pi),\; 1\leq k,l\leq d(\sigma)}\]
the corresponding system of matrix units in $\B(L^2(G))$. 
For a finite subset $F\subset \hG$, we define a finite rank  projection $P_F\in \B(L^2(G))$ commuting with $\lambda_g$ by 
\[P_F=\sum_{\pi\in F}\sum_{1\leq i,j\leq d(\pi)}E_{\sqrt{d(\pi)}\pi_{ij},\sqrt{d(\pi)}\pi_{ij}}.\]

Mimicking the construction in the proof of \cite[Theorem 2.6.8]{BO08} for the reduced group C$^*$-algebra of a discrete group, 
we define ucp maps $\varphi_F:C(G)\to \B(P_FL^2(G))$ and $\varphi'_F:\B(P_FL^2(G))\to C(G)$ by 
$\varphi_F(f)=P_FM_fP_F$, and 
\[\varphi'_F(E_{\sqrt{d(\pi)}\pi_{ij},\sqrt{d(\sigma)}\sigma_{kl}})=\frac{1}{w(F)}\sqrt{d(\pi)d(\sigma)}\pi_{ij}\overline{\sigma_{kl}},\]
\[w(F)=\sum_{\pi \in F}d(\pi)^2.\]
Then $\varphi_F$ and $\varphi'_F$ are $G$-equivariant in the sense that 
$\varphi_F\circ L_g=\Ad(P_F\lambda_g)\circ \varphi_F$ and $\varphi'_F\circ \Ad (P_F\lambda_g) =L_g\circ \varphi'_F$ hold. 

We define a kernel function $K_F\in C(G)$ associated with $F$ by 
\[K_F(g)=\frac{1}{w(F)}\left| \sum_{\pi\in F}d(\pi)\chi_\pi(g)\right|^2.\]
Recall that the convolution $f_1*f_2$ of $f_1,f_2\in C(G)$ is defined by 
\[f_1*f_2(g)=\int_G f_1(h)f_2(h^{-1}g)dh=\int_Gf_1(gh^{-1})f_2(h)dh.\] 

\begin{lemma} For $f\in C(G)$, we have 
\[\varphi'_F\circ \varphi_F(f)=f*K_F.\]
\end{lemma}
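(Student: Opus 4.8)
The plan is to unwind both definitions and land on the convolution formula by a direct, if bookkeeping-heavy, computation. First I would compute the composite $\varphi'_F\circ\varphi_F(f)$ by expanding $\varphi_F(f)=P_FM_fP_F$ in the matrix-unit basis: writing the multiplication operator's matrix elements as $\inpr{M_f\,\sqrt{d(\sigma)}\sigma_{kl}}{\sqrt{d(\pi)}\pi_{ij}}$, I would recognize these inner products, via the orthogonality relations, as integrals of $f$ against products $\pi_{ij}\overline{\sigma_{kl}}$ over $G$. Then applying $\varphi'_F$ to the resulting linear combination of matrix units reintroduces the functions $\sqrt{d(\pi)d(\sigma)}\,\pi_{ij}\overline{\sigma_{kl}}/w(F)$, so that the whole expression becomes a double sum over $\pi,\sigma\in F$ and all matrix indices.

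The key step is then to collapse that double sum using the Peter--Weyl orthogonality relations a second time and to recognize the answer as a convolution. The summation over the indices $i,j,k,l$ should factor through the characters: I expect the inner products to force $\pi=\sigma$ (up to the structure of the basis) and, after summing over matrix indices, to produce the character $\chi_\pi$ at the appropriate group element. Concretely, I would substitute the integral expression back in, interchange the (finite) sum with the integral, and massage the resulting kernel into the form $\frac{1}{w(F)}\bigl|\sum_{\pi\in F}d(\pi)\chi_\pi\bigr|^2$ evaluated at the relevant argument, which is precisely $K_F$. Matching this against the definition of the convolution $f*K_F(g)=\int_G f(gh^{-1})K_F(h)\,dh$ then gives the claim.

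The main obstacle I anticipate is purely organizational: keeping track of the complex conjugates and the precise placement of group elements so that the four matrix indices collapse correctly and the kernel emerges as $\left|\sum_{\pi\in F} d(\pi)\chi_\pi\right|^2$ rather than some near-miss. The character identity $\sum_i \pi_{ij}(g)\overline{\pi_{ij}(h)}$-type contractions and the factor $d(\pi)$ must be handled with care, and it is easy to drop a normalization or misplace an inverse in $K_F(h^{-1}g)$ versus $K_F(gh^{-1})$. I would use the symmetric form of the convolution given in the excerpt to absorb such ambiguities, and I would double-check the constant $w(F)=\sum_{\pi\in F}d(\pi)^2$ appears with the correct power, since it enters once through $\varphi'_F$ and must match the normalization built into $K_F$. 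No genuinely hard idea is needed beyond the two invocations of orthogonality; the content is entirely in executing the index contraction cleanly.
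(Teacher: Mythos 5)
Your overall route is the same as the paper's: expand $\varphi_F(f)=P_FM_fP_F$ in the matrix units, read off the coefficients as the integrals $\sqrt{d(\pi)d(\sigma)}\int_G f(h)\sigma_{kl}(h)\overline{\pi_{ij}(h)}\,dh$, apply $\varphi'_F$, interchange the finite sum with the integral, and identify the kernel. But one step as you describe it would fail: the inner products do \emph{not} force $\pi=\sigma$, and no second application of the orthogonality relations occurs. Because of the factor $f(h)$ in the integrand, the matrix elements $\inpr{M_f\sqrt{d(\sigma)}\sigma_{kl}}{\sqrt{d(\pi)}\pi_{ij}}$ with $\pi\neq\sigma$ are generically nonzero, and these cross terms are essential: they are exactly what makes the kernel the square modulus $\frac{1}{w(F)}\bigl|\sum_{\pi\in F}d(\pi)\chi_\pi\bigr|^2$ rather than the diagonal near-miss $\frac{1}{w(F)}\sum_{\pi\in F}d(\pi)^2|\chi_\pi|^2$ that a forced $\pi=\sigma$ would produce.

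The correct mechanism, which is what the paper uses, is unitarity rather than a second orthogonality: $\sum_{i,j}\pi_{ij}(g)\overline{\pi_{ij}(h)}=\chi_\pi(h^{-1}g)$ and $\sum_{k,l}\sigma_{kl}(h)\overline{\sigma_{kl}(g)}=\overline{\chi_\sigma(h^{-1}g)}$, so the four matrix indices contract to $\chi_\pi(h^{-1}g)\,\overline{\chi_\sigma(h^{-1}g)}$ for \emph{each} pair $(\pi,\sigma)\in F\times F$, and the full double sum over $\pi,\sigma$ then factors as a product of a sum and its complex conjugate, giving $w(F)K_F(h^{-1}g)$ under the integral. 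With that correction the rest of your outline --- the normalization $w(F)$ entering once through $\varphi'_F$ and the identification with $\int_G f(h)K_F(h^{-1}g)\,dh=f*K_F(g)$ --- goes through as written.
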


\begin{proof} We have 
\begin{align*}
\lefteqn{\varphi_F(f)=\sum_{\pi,\sigma\in F}\sum_{i,j,k,l}\inpr{M_f\sqrt{d(\sigma)}\sigma_{kj}}{\sqrt{d(\pi)}\pi_{ij}}
E_{\sqrt{d(\pi)}\pi_{ij},\sqrt{d(\sigma)}\sigma_{kl}}} \\
 &=\sum_{\pi,\sigma\in F}\sum_{i,j,k,l}\sqrt{d(\pi)d(\sigma)}
 \int_G f(h)\sigma_{kl}(h)\overline{\pi_{ij}(h)}dh
E_{\sqrt{d(\pi)}\pi_{ij},\sqrt{d(\sigma)}\sigma_{kl}},\\
\end{align*}
\begin{align*}
\lefteqn{\varphi'_F\circ \varphi_F(f)(g)
 = \frac{1}{w(F)}\sum_{\pi,\sigma\in F}\sum_{i,j,k,l}d(\pi)d(\sigma)
 \int_G f(h)\sigma_{kl}(h)\overline{\sigma_{kl}(g)}\overline{\pi_{ij}(h)}\pi_{ij}(g)dh}\\
 &=\frac{1}{w(F)}\sum_{\pi,\sigma\in F}d(\pi)d(\sigma)
 \int_G f(h)\overline{\chi_\sigma(h^{-1}g)}\chi_\pi(h^{-1}g)dh=\int_G f(h)K_F(h^{-1}g)dh, 
\end{align*}
which shows the statement. 
\end{proof}

Popa \cite{P94} introduced the notion of amenability for rigid C$^*$-tensor categories in the context of subfactors. 
As in the case of discrete groups, he formulated a F\o lner sequence and showed that amenability is equivalent to 
the existence of a F\o lner sequence. 
It was observed in \cite{HI98} that the amenability is a property depending only on the fusion algebra structure of the category. 
The following version of the definition of a F\o lner sequence was formulated in {\cite[Definition 4.5]{HI98}}, 
and it is equivalent to Popa's original definition.

\begin{definition} A sequence $\{F_n\}_{n=1}^\infty $ of finite subsets of $\hG$ is said to be a F\o lner sequence if for 
every $\pi\in \hG$, the following holds: 
\[\lim_{n\to\infty}\frac{1}{w(F_n)}\left(\sum_{\sigma\in F_n}\sum_{\mu\in F_n^c}\frac{d(\sigma)d(\mu)}{d(\pi)}N_{\pi,\mu}^\sigma
+\sum_{\sigma\in F_n^c}\sum_{\mu\in F_n}\frac{d(\sigma)d(\mu)}{d(\pi)}N_{\pi,\mu}^\sigma\right)=0,\]
where $F_n^c$ is the complement of $F_n$. 
\end{definition}

It is well-known that the representation category of $G$ is amenable, and $\hG$ always has a F\o lner sequence 
(see, for example, \cite[Theorem 4.5]{Ru96}, \cite[subsection 2.3]{MT07}). 
One easy way to see it is as follows. 
Since every finitely generated subcategory of $\cU(G)$ is equivalent to the representation category of a closed subgroup 
of $SU(n)$, 
it has polynomial growth, and hence has a F\o lner sequence (see \cite[Example 7.6]{HI98}). 
Therefore the whole category has a F\o lner sequence too. 

\begin{example} When $G=\T$, we have $\widehat{\T}=\Z$, and  
$F_n=\{0,1,\ldots,n\}$, $n\in \N$, give a F\o lner sequence. 
The function $K_{F_n}$ in this case is 
\[K_{F_n}(e^{it})=\frac{1}{n+1}\left|\sum_{k=0}^ne^{ikt}\right|^2=\frac{1}{n+1}\frac{\sin^2\frac{(n+1)t}{2}}{\sin^2\frac{t}{2}},\]
which is nothing but the Fej\'er kernel.
\end{example}

\begin{theorem}\label{Fejer} Let $\{F_n\}_{n=1}^\infty$ be a F\o lner sequence for $\hG$. 
Then for all $f\in C(G)$, 
\[\lim_{n\to \infty}\|\varphi'_{F_n}\circ \varphi_{F_n}(f)-f\|_\infty=0.\]
\end{theorem}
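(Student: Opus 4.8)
The plan is to combine the preceding lemma, which gives $\varphi'_{F_n}\circ\varphi_{F_n}(f)=f*K_{F_n}$, with the observation that $\{K_{F_n}\}$ behaves as an approximate identity acting through Fourier multipliers. First I would record two elementary facts about the kernel: $K_{F_n}\geq 0$ by construction, and $\int_G K_{F_n}(g)\,dg=1$ by the orthogonality relations (indeed $\int_G|\sum_{\pi\in F_n}d(\pi)\chi_\pi|^2=\sum_{\pi\in F_n}d(\pi)^2=w(F_n)$). Consequently the convolution operators $T_n(f)=f*K_{F_n}$ are contractions on $(C(G),\|\cdot\|_\infty)$, since $\|f*K_{F_n}\|_\infty\leq\|f\|_\infty\|K_{F_n}\|_1=\|f\|_\infty$. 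Because the matrix coefficients span a $\|\cdot\|_\infty$-dense $*$-subalgebra of $C(G)$ (Peter--Weyl together with Stone--Weierstrass) and the $T_n$ are uniformly bounded, a standard three-$\varepsilon$ argument reduces the statement to showing $T_n(\sigma_{kl})\to\sigma_{kl}$ for each fixed $\sigma\in\hG$ and each pair $k,l$.

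The second step diagonalizes $T_n$ on matrix coefficients. Since $K_{F_n}$ is a central function, I expand it in characters, $K_{F_n}=\sum_{\nu\in\hG}a_\nu\chi_\nu$ with $a_\sigma=\int_G K_{F_n}\overline{\chi_\sigma}$, and use the convolution rule $\sigma_{kl}*\chi_\nu=\frac1{d(\sigma)}\delta_{\nu,\sigma}\sigma_{kl}$, which is a direct consequence of Schur orthogonality. This yields
\[T_n(\sigma_{kl})=\frac{a_\sigma}{d(\sigma)}\,\sigma_{kl},\]
so everything comes down to proving that the multiplier $a_\sigma/d(\sigma)$ tends to $1$ for each fixed $\sigma$.

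The third step is the heart of the matter and is where the F\o lner condition enters. Expanding the square in the definition of $K_{F_n}$ and using $\chi_\pi\overline{\chi_\mu}=\chi_{\pi\otimes\overline{\mu}}=\sum_\nu N^{\nu}_{\pi,\overline{\mu}}\chi_\nu$, I read off
\[a_\sigma=\frac1{w(F_n)}\sum_{\pi,\mu\in F_n}d(\pi)d(\mu)\,N^{\sigma}_{\pi,\overline{\mu}}.\]
The key computation is that summing the second index over all of $\hG$ is exact: by Frobenius reciprocity $N^{\sigma}_{\pi,\overline{\mu}}=N^{\pi}_{\sigma,\mu}$, and $\sum_{\mu\in\hG}d(\mu)N^{\pi}_{\sigma,\mu}=\dim(\sigma\otimes\pi)=d(\sigma)d(\pi)$, whence
\[\frac1{w(F_n)}\sum_{\pi\in F_n}\frac{d(\pi)}{d(\sigma)}\sum_{\mu\in\hG}d(\mu)N^{\pi}_{\sigma,\mu}=\frac1{w(F_n)}\sum_{\pi\in F_n}d(\pi)^2=1.\]
Subtracting the piece with $\mu\in F_n$, which is exactly $a_\sigma/d(\sigma)$, leaves
\[1-\frac{a_\sigma}{d(\sigma)}=\frac1{w(F_n)}\sum_{\pi\in F_n}\sum_{\mu\in F_n^c}\frac{d(\pi)d(\mu)}{d(\sigma)}N^{\pi}_{\sigma,\mu},\]
which is precisely the first of the two nonnegative summands in the F\o lner expression for the representation $\sigma$. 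Since that whole expression tends to $0$, so does this term, giving $a_\sigma/d(\sigma)\to1$ and completing the reduction from the second step.

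I expect the main obstacle to be the bookkeeping in the third step: correctly identifying the Fourier multiplier $a_\sigma/d(\sigma)$, matching the tail $\sum_{\mu\in F_n^c}$ to the precise shape of the F\o lner sum, and carrying the Frobenius reciprocities and conjugations ($N^{\sigma}_{\pi,\overline{\mu}}=N^{\pi}_{\sigma,\mu}$ and $\sum_\mu d(\mu)N^{\pi}_{\sigma,\mu}=d(\sigma)d(\pi)$) without sign or bar errors. Once the multiplier is pinned to the first F\o lner summand, the remaining ingredients---contractivity of $T_n$ and the density argument---are routine.
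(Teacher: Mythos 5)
Your proposal is correct and follows essentially the same route as the paper: reduce to matrix coefficients by density and contractivity, show convolution with $K_{F_n}$ multiplies $\sigma_{kl}$ by the scalar $\frac{1}{w(F_n)}\sum_{\pi,\mu\in F_n}\frac{d(\pi)d(\mu)}{d(\sigma)}N^{\sigma}_{\pi,\overline{\mu}}$, and identify $1$ minus this scalar with the first F\o lner summand via Frobenius reciprocity and $\sum_{\mu}d(\mu)N^{\pi}_{\sigma,\mu}=d(\sigma)d(\pi)$. Your packaging via the character expansion of the central kernel and the multiplier identity $\sigma_{kl}*\chi_\nu=\delta_{\nu,\sigma}\sigma_{kl}/d(\sigma)$ is just a tidier presentation of the paper's direct integral computation.
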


\begin{proof} Since the linear span of the matrix coefficients of the irreducible representations is uniformly dense in $C(G)$, 
it suffice to show the statement for $f=\pi_{ij}$ with $\pi\in \hG$. 
For a finite set $F\subset \hG$, we have  
\begin{align*}
\lefteqn{\pi_{ij}*K_{F}(g)
=\frac{1}{w(F)}\sum_{k=1}^{d(\sigma)}\pi_{ik}(g)\sum_{\sigma,\mu\in F}d(\sigma)d(\mu)
 \int_G \overline{\pi_{jk}(h)}\chi_\sigma(h)\overline{\chi_\mu(h)}dh}\\
 &=\frac{1}{w(F)}\sum_{k=1}^{d(\sigma)}\pi_{ik}(g)\sum_{\sigma,\mu\in F}d(\sigma)d(\mu)\sum_{\nu\in \hG}N_{\sigma,\overline{\mu}}^\nu
 \int_G \overline{\pi_{jk}(h)}\chi_\nu(h)dh \\
&=\frac{\pi_{ij}(g)}{w(F)}\sum_{\sigma,\mu\in F}\frac{d(\sigma)d(\mu)}{d(\pi)}N_{\sigma,\overline{\mu}}^\pi.\\
\end{align*}
Since $N_{\sigma,\overline{\mu}}^\pi=N_{\pi,\mu}^\sigma$, and $\sum_{\sigma\in \hG}d(\sigma)N_{\pi,\mu}^\sigma=d(\pi)d(\mu)$, 
we get 
\begin{align*}
\lefteqn{\frac{1}{w(F)}\sum_{\sigma,\mu\in F}\frac{d(\sigma)d(\mu)}{d(\pi)}N_{\sigma,\overline{\mu}}^\pi
=\frac{1}{w(F)}\sum_{\sigma,\mu\in F}\frac{d(\sigma)d(\mu)}{d(\pi)}N_{\pi,\mu}^\sigma} \\
 &=\frac{1}{w(F)}\sum_{\mu\in F}\left(\sum_{\sigma\in \hG}\frac{d(\sigma)d(\mu)}{d(\pi)}N_{\pi,\mu}^\sigma-
 \sum_{\sigma\in F^c}\frac{d(\sigma)d(\mu)}{d(\pi)}N_{\pi,\mu}^\sigma
 \right) \\
 &=1-\frac{1}{w(F)}\sum_{\mu\in F}\sum_{\sigma\in F^c}\frac{d(\sigma)d(\mu)}{d(\pi)}N_{\pi,\mu}^\sigma. \\
\end{align*}
This shows 
\[\|\pi_{ij}-\varphi'_{F_n}\circ \varphi_{F_n}(\pi_{ij})\|_\infty\leq 
\frac{\|\pi_{ij}\|_\infty}{w(F_n)}\sum_{\mu\in F_n}\sum_{\sigma\in F_n^c}\frac{d(\sigma)d(\mu)}{d(\pi)}N_{\pi,\mu}^\sigma\to 0,\quad (n\to\infty).\]
\end{proof}
 
As an application, we show that every nuclear C$^*$-algebra with a compact group action has 
the following equivariant CPAP.  

\begin{lemma}\label{eqCPAP} 
Let $B$ be a unital nuclear C$^*$-algebra with a $G$-action $\beta$. 
Then for any finite subset $F\subset B$ and $\varepsilon>0$, there exists $n\in \N$, ucp maps $\varphi:A\to \M_n(\C)$, 
$\varphi':\M_n(\C)\to B$, and a unitary representation $u$ of $G$ in $\M_n(\C)$ such that $\varphi\circ \beta_g=\Ad u(g)\circ \varphi$, 
$\varphi'\circ \Ad u(g)=\beta_g\circ \varphi'$, and $\|\varphi'\circ \varphi(x)-x\|<\varepsilon$ for all $x\in F$. 
\end{lemma}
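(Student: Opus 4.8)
The plan is to reduce the equivariant statement to the ordinary CPAP of $B$ by embedding $B$ equivariantly into $C(G)\otimes B$, where the $G$-action is concentrated on the $C(G)$-factor and can be handled by the Fej\'er approximation of Theorem \ref{Fejer}. First I would identify $C(G)\otimes B$ with $C(G,B)$ and introduce the equivariant embedding $\iota:B\to C(G)\otimes B$, $\iota(x)(g)=\beta_{g^{-1}}(x)$, which is a unital $*$-homomorphism intertwining $\beta$ with $\gamma:=L\otimes\id_B$ (indeed $\iota(\beta_h(x))(g)=\beta_{g^{-1}h}(x)=(\gamma_h\iota(x))(g)$). The point is that $\iota$ admits an equivariant ucp left inverse given by averaging, namely $E:C(G)\otimes B\to B$, $E(f)=\int_G\beta_g(f(g))\,dg$. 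A short computation shows $E\circ\iota=\id_B$, and, after the substitution $g\mapsto hg$ in the Haar integral, $E\circ\gamma_h=\beta_h\circ E$; moreover $E$ is ucp as an integral of the $*$-homomorphisms $f\mapsto\beta_g(f(g))$.

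Next I would produce an equivariant finite-dimensional approximation of $\id_{C(G)\otimes B}$ by tensoring the two ingredients at hand. Fix a F\o lner sequence $\{F_n\}$ for $\hG$ and use the Fej\'er maps $\varphi_{F_n}:C(G)\to\B(P_{F_n}L^2(G))$ and $\varphi'_{F_n}:\B(P_{F_n}L^2(G))\to C(G)$ introduced above, which are equivariant for $L_g$ and $\Ad(P_{F_n}\lambda_g)$ and satisfy $\varphi'_{F_n}\circ\varphi_{F_n}\to\id_{C(G)}$ in point-norm by Theorem \ref{Fejer}. Independently, since $B$ is nuclear, choose (non-equivariant) ucp maps $\psi:B\to\M_m(\C)$ and $\psi':\M_m(\C)\to B$ with $\psi'\circ\psi$ close to $\id_B$ on a prescribed finite set. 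Setting $n=w(F_n)\,m$, I identify $\B(P_{F_n}L^2(G))\otimes\M_m(\C)$ with $\M_n(\C)$ and equip it with the finite-dimensional unitary representation $u(g)=(P_{F_n}\lambda_g)|_{P_{F_n}L^2(G)}\otimes 1_m$. Because the action on the $B$-factor of $C(G)\otimes B$ is trivial, the tensor maps $\varphi_{F_n}\otimes\psi$ and $\varphi'_{F_n}\otimes\psi'$ are then equivariant for $\gamma_g$ and $\Ad u(g)$.

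I then set $\varphi=(\varphi_{F_n}\otimes\psi)\circ\iota$ and $\varphi'=E\circ(\varphi'_{F_n}\otimes\psi')$. Equivariance of $\varphi$ and $\varphi'$ follows formally by chaining the equivariance of $\iota$, of $E$, and of the tensor maps. For the approximation, note $\varphi'\circ\varphi=E\circ\big((\varphi'_{F_n}\varphi_{F_n})\otimes(\psi'\psi)\big)\circ\iota$; since $E$ is contractive and $E\circ\iota=\id_B$, it suffices to show that $(\varphi'_{F_n}\varphi_{F_n})\otimes(\psi'\psi)$ moves each $\iota(x)$, $x\in F$, by less than $\varepsilon$. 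This is where the two approximations are combined: approximate $\iota(x)$ in norm by an element $y_x=\sum_k f_{x,k}\otimes b_{x,k}$ of the algebraic tensor product $C(G)\odot B$, then choose $F_n$ large enough (via Theorem \ref{Fejer}) and $\psi,\psi'$ fine enough that $\varphi'_{F_n}\varphi_{F_n}(f_{x,k})\approx f_{x,k}$ and $\psi'\psi(b_{x,k})\approx b_{x,k}$ for the finitely many coefficients occurring; contractivity of the ucp maps then controls the error on $\iota(x)$ itself, not merely on $y_x$.

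The only genuinely non-formal point is the construction of the equivariant left inverse $E$: the obvious left inverse, evaluation at the identity of $G$, is not equivariant, and it is the averaged form $E(f)=\int_G\beta_g(f(g))\,dg$ that simultaneously intertwines the actions and inverts $\iota$. Everything else is bookkeeping with point-norm estimates and the density of $C(G)\odot B$ in $C(G)\otimes B$, so I expect the main effort to lie in verifying the equivariance identities for $\iota$, $E$, and the tensor maps, and in organizing the $\varepsilon$-management across the two independent approximations.
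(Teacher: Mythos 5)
Your proposal is correct and follows essentially the same route as the paper: embed $B$ equivariantly into $C(G)\otimes(\cdot)$ via $x\mapsto(g\mapsto\beta_{g^{-1}}(x))$, invert with the averaged ucp map $f\mapsto\int_G\beta_g(f(g))\,dg$, and combine the Fej\'er maps of Theorem \ref{Fejer} on the $C(G)$-leg with the ordinary CPAP of $B$ on the other leg. The only cosmetic difference is bookkeeping: the paper first compresses $B$ to $\M_{n_0}(\C)$ (using compactness of $\beta_G(F)$ to get uniformity over the orbit) and then applies the Fej\'er approximation, while you tensor the two approximations together and control the error via density of the algebraic tensor product; both are valid.
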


\begin{proof} Since $B$ is nuclear and $\beta_G(F)$ is a compact set, there exists $n_0$ and ucp maps $\varphi_0:B\to \M_{n_0}(\C)$ 
and $\varphi'_0:\M_{n_0}(\C)\to B$ satisfying 
\[\|\varphi'_0\circ \varphi_0(\beta_g(x))-\beta_g(x)\|< \varepsilon/2,\quad \forall g\in G,\;\forall x\in F.\] 
We define ucp maps $\Phi:B\to C(G)\otimes \M_{n_0}(\C)$ and $\Phi':C(G)\otimes \M_{n_0}(\C)\to B$ by 
$\Phi(x)(g)=\varphi_0(\beta_{g^{-1}}(x))$ and $\Phi'(f)=\int_G \beta_g(\varphi'_0(f(g)))dg$.
Then $\Phi$ and $\Phi'$ are equivariant in the sense that $\Phi\circ \beta_g=(L_g\otimes \id)\circ \Phi$ and 
$\Phi'\circ (L_g\otimes \id)=\beta_g\circ \Phi'$ hold. 
For $x\in F$, we have 
\[\|\Phi'\circ \Phi(x)-x\|=\|\int_G\beta_g(\varphi'_0\circ \varphi_0(\beta_{g^{-1}}(x)))dg-x\|<\varepsilon/2.\]
Now the statement follows from Theorem \ref{Fejer}. 
\end{proof} 

\section{Proof of Theorem \ref{main2}}
We first establish an equivariant version of Kirchberg's dilation theorem \cite{Kir94}, \cite[Proposition 1.7]{KP}, 
as an application of Theorem \ref{main1} and Lemma \ref{eqCPAP}. 

\begin{theorem}\label{dilation} Under the assumption of Theorem \ref{main2}, we assume that $A$ is unital. 
Let $\nu:A\to A$ be a $G$-equivariant ucp map. 
Then for every finite set $F\subset A$ and $\varepsilon>0$, there exists $V$ in $A^\alpha$ such that 
$\|\nu(x)-V^*xV\|<\varepsilon$ for all $x\in F$.  
\end{theorem}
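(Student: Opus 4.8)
The plan is to follow the scheme of Kirchberg's dilation theorem, inserting the compact group equivariance at each stage using the quasi-product structure guaranteed by Theorem \ref{main1} (so that $A^\alpha$, hence $A$, falls under the purely infinite theory and Lemma \ref{equivalence1} and Theorem \ref{qp1} apply).

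First I would reduce to a map factoring through matrices. Applying Lemma \ref{eqCPAP} to $B=A$, $\beta=\alpha$ and the finite set $\nu(F)$, I approximate the identity on $\nu(F)$ by $\varphi'\circ\varphi$ with equivariant ucp maps $\varphi:A\to\M_n(\C)$ and $\varphi':\M_n(\C)\to A$ relative to a unitary representation $u:G\to U(\M_n(\C))$. Then on $F$ one has $\nu\approx\varphi'\circ(\varphi\circ\nu)$, so it suffices to dilate an equivariant ucp map of the form $\psi\circ\theta$, where $\theta=\varphi\circ\nu:A\to\M_n(\C)$ and $\psi=\varphi':\M_n(\C)\to A$ are equivariant for $(\alpha,\Ad u)$.

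Next I would realize both the matrix algebra and the map $\psi\circ\theta$ inside $A$. Since $A^\alpha$ is purely infinite, Lemma \ref{equivalence1} (together with Remark \ref{equivalence2}) yields $\cH_u\in\cU(G,\alpha)$ of support $1$ carrying the representation $(\C^n,u)$; its orthonormal basis gives isometries $\{t_i\}\subset A$ with $\sum_i t_it_i^*=1$ and $\alpha_g(t_i)=\sum_j u(g)_{ji}t_j$, hence a unital equivariant embedding $\iota:\M_n(\C)\to A$, $e_{ij}\mapsto t_it_j^*$. Writing $\psi$ in Kraus form $\psi(y)=\sum_k r_k^*(y\otimes1)r_k$ with $r_k\in A^n$, and internalizing the Stinespring dilation of $\theta$ by an equivariant matricial excision (orthonormal isometries $a_i\in A$ with $a_i^*xa_j\approx\theta(x)_{ij}1$ on $F$), I would rewrite $\psi\circ\theta$ as a finite sum of compressions $\sum_{k=1}^N U_k^*xU_k$ with $U_k\in A$ and $\sum_k U_k^*U_k\approx1$, arranging the construction so that the family $\{U_k\}$ spans a finite-dimensional $\alpha$-invariant subspace of $A$ transforming by some representation $\lambda$ of $G$.

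Finally I would collapse the sum into a single compression lying in $A^\alpha$. Choosing orthogonal-range isometries $\{s_k\}\subset A$ that are both asymptotically central in $A$ and transform under $\alpha$ by the conjugate representation $\bar\lambda$, the element $V=\sum_k s_kU_k$ is genuinely $\alpha$-fixed, and asymptotic centrality gives $s_k^*xs_l\approx\delta_{kl}x$, whence $V^*xV\approx\sum_k U_k^*xU_k\approx\nu(x)$ on $F$; a small perturbation makes $V$ an exact element of $A^\alpha$. The required isometries are exactly what the quasi-product property provides: Theorem \ref{qp1}(3) furnishes, for each $\pi\in\hG$, central sequences in the spectral subspaces $A^\alpha_1(\pi)$, which, using pure infiniteness, assemble into asymptotically central support-$1$ Hilbert spaces carrying $\bar\lambda$. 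The main obstacle I anticipate is precisely the equivariant bookkeeping in the middle step: performing the excision/Stinespring internalization of $\theta$ and $\psi$ so that all the resulting data remain compatible with the $G$-action and $\{U_k\}$ genuinely spans a finite-dimensional $G$-module, so that its conjugate can be matched by isometries that are simultaneously asymptotically central and confined to prescribed spectral subspaces of support $1$ — this simultaneous requirement is where the full strength of the quasi-product condition is needed.
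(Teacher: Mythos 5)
Your opening move coincides with the paper's: apply Lemma \ref{eqCPAP} to replace $\nu$ by $\varphi'\circ(\varphi\circ\nu)$ with $\varphi,\varphi'$ equivariant through a unitary representation $u$ on $\M_n(\C)$, and use Lemma \ref{equivalence1}/Remark \ref{equivalence2} to realize $u$ and its conjugate by invariant Hilbert spaces inside $A$. After that the arguments diverge, and the two steps you yourself flag as delicate are genuine gaps rather than omitted routine details. The ``equivariant matricial excision'' producing isometries $a_i$ with $a_i^*xa_j\approx\theta(x)_{ij}1$ is asserted, not constructed, and it is the heart of the theorem. The paper's mechanism is to \emph{scalarize} the problem: using the conjugate Hilbert space $\overline{\cH}$ with basis $\{\bpsi_i\}$ it forms the single $\alpha$-invariant state $\omega(x)=\frac1n\sum_{i,j}\inpr{\varphi(\bpsi_i^*x\bpsi_j)\xi_j}{\xi_i}$ and rewrites $\varphi'\circ\varphi(x)=n\sum_{i,j}\omega(\bpsi_ix\bpsi_j^*)\varphi'(e_{ij})$. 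Quasi-productness enters only here: by \cite[Theorem 3.1]{BKR97} the invariant state $\omega$ is a weak$^*$ limit of invariant \emph{pure} states $\omega_0$, and the left kernel of $\omega_0$ gives an $\alpha$-invariant hereditary subalgebra whose approximate unit, taken in $A^\alpha$ by Lemma \ref{appunit}, combines with pure infiniteness of $A^\alpha$ to yield one isometry $v\in A^\alpha$ with $v^*yv\approx\omega(y)$ on the relevant finite set; a second isometry $w\in A^\alpha$ with $\varphi'(e_{ij})=w^*\psi_i\psi_j^*w$ comes from the square root of the positive matrix $(\varphi'(e_{ij}))$. Your proposal never supplies the analogous construction for the matrix-valued map; the passage through $\overline{\cH}$ to an invariant state is the missing idea.

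The second gap is more serious because it is circular. Your final collapse $V=\sum_k s_kU_k$ needs orthogonal-range isometries $s_k$ that are simultaneously asymptotically central in $A$ and transform under $\alpha$ by a prescribed representation $\bar\lambda$ --- that is, a copy of $\bar\lambda$ as a Hilbert space of isometries in $A^\infty\cap A'$. This is essentially the isometric shift-absorption of $\alpha$, i.e.\ the conclusion of Theorem \ref{main2}, and in the paper such Hilbert spaces $\cK_\cH\subset A^\infty\cap A'$ are \emph{derived from} Theorem \ref{dilation} (via $\cK_\cH=V_\cH\cH$), not used to prove it. Deducing them from Theorem \ref{qp1}(3) is not routine: that condition only provides central sequences of contractions $y_{n1}$ in spectral subspaces with the lower bound $\limsup_n\|ay_{n1}\|\geq\|a\|/d(\pi)$, and upgrading these to isometries spanning a support-one $G$-Hilbert space in the central sequence algebra is a problem of comparable depth to the theorem itself. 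The paper avoids it entirely: the compression is effected by the fixed, non-central Hilbert spaces $\cH,\overline{\cH}$ and the explicit element $V=\sqrt n\sum_j\bpsi_j^*v\psi_j^*w\in A^\alpha$, so no equivariant central sequences are needed at this stage.
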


\begin{proof} Since $A$ is nuclear, Lemma \ref{eqCPAP} shows that there exist $n\in \N$, a unitary representation $u$ 
of $G$ in $\M_n(\C)$, and $G$-equivariant ucp maps $\varphi:A\to \M_n(\C)$ and $\varphi':\M_n(\C)\to A$ satisfying 
$\|\varphi'\circ \varphi(x)-\nu(x)\|<\varepsilon/2$ for all $x\in F$, where $\M_n(\C)$ is equipped with a $G$-action given by $\Ad u_g$. 

Thanks to Remark \ref{equivalence2}, there exist globally $\alpha$-invariant Hilbert spaces $\cH$ and $\overline{\cH}$ 
with orthonormal bases $\{\psi_i\}_{i=1}^n$ and $\{\bpsi_i\}_{i=1}^n$ respectively satisfying 
$\alpha_g(\psi_i)=\sum_{j}u_{ji}(g)\psi_j$, and $\alpha_g(\bpsi_i)=\sum_{j}\overline{u_{ji}(g)}\bpsi_j$. 
We let $F_1=\cup_{i,j}\bpsi_iF\bpsi_j^*$. 

Let $\{\xi_i\}_{i=1}^n$ be the standard basis of $\C^n$ and let $\{e_{ij}\}_{i,j}$ be the corresponding system of matrix units. 
We define a state $\omega$ of $A$ by 
\[\omega(x)=\frac{1}{n}\sum_{1\leq i,j\leq n}\inpr{\varphi(\bpsi_i^*x\bpsi_j)\xi_j}{\xi_i}.\]
Then a direct computation shows that $\omega$ is $\alpha$-invariant. 
We have 
\[\varphi'\circ \varphi(x)=n\sum_{i,j}\omega(\bpsi_i x \bpsi_j^*)\varphi'(e_{ij}).\]

We claim that there exists an isometry $v\in A^\alpha$ satisfying 
\[\|\omega(x)-v^*yv\|<\delta,\quad \forall y\in F_1,\]
where $\delta=\varepsilon/(2n^3)$ for our purpose. 
Thanks to Theorem \ref{main1}, the action $\alpha$ is quasi-product, and $\omega$ can be approximated 
by $\alpha$-invariant pure states in the weak$*$ topology (see \cite[Theorem 3.1]{BKR97}).  
Thus there exists an $\alpha$-invariant pure state 
$\omega_0$ of $A$ satisfying $|\omega_0(y)-\omega(y)|<\delta/2$ for all $y\in F_1$. 
Let $L=\{y\in A;\; \omega_0(y^*y)=0\}$. 
Then $L\cap L^*$ is a globally $\alpha$-invariant hereditary C$^*$-subalgebra of $A$, and 
there exists an approximate units $\{u_n\}_n$ of $L\cap L^*$ in $A^\alpha$ by Lemma \ref{appunit}. 
Let $a_n=1-u_n$. 
Then $\{a_n\}_n$ is a decreasing sequence in $A^\alpha_+$ of norm 1 such that  
$\{\|a_n(x-\omega_0(x))a_n\|\}_n$ converges to 0 for all $x \in A$ because 
$x-\omega_0(x)\in \ker \omega_0=L+L^*$. 
Since $A^\alpha$ is purely infinite, we can see from this that there exist isometries $\{v_n\}_n$ in $A^\alpha$ such that 
$\{v_n^*xv_n -\omega_0(x)\}_n$ converges to 0 for all $x\in A$.  
Thus the claim is shown. 

Next we claim that there exists an isometry $w\in A^\alpha$ satisfying 
\[\varphi'(e_{ij})=w^*\psi_i\psi_j^*w, \quad 1\leq i,j\leq n.\]
Since $\varphi'$ is completely positive, we have 
$a:=(\varphi'(e_{ij}))\in \M(A)_n$ is positive, and a direct computation shows that it is fixed by $\alpha_g\otimes \Ad \bu_g$. 
Since $b=a^{1/2}$ is fixed by $\alpha_g\otimes \Ad \bu_g$, we get
\[w:=\sum_{j,k}\psi_j\bpsi_k b_{kj}\in A^\alpha.\]
Now we have 
\[w^*\psi_i\psi_j^*w=\sum_kb_{ki}^*b_{kj}=\varphi(e_{ij}),\]
and the claim is shown. 

Let 
\[V=\sqrt{n}\sum_{j=1}^n \bpsi_j^*v\psi_j^*w,\]
which is an element in $A^\alpha$. 
Then for $x\in F$, 
\[\|\varphi'\circ\varphi(x)-V^*xV\|=\|n\sum_{i,j}w^*\psi_i(\omega(\bpsi_ix\bpsi_j^*)-v^*\bpsi_ix\bpsi_j^*v)\psi_j^*w\|<\varepsilon/2,\]
and the statement is shown. 
\end{proof}

\begin{proof}[Proof of Theorem \ref{main2}] Let $e\in A^\alpha$ be a non-zero projection with $[e]_0=0$ in $K_0(A)$. 
To show that $\alpha$ is isometrically shift-absorbing, it suffices to show that the restriction of $\alpha$ to $eAe$ has 
the same property. 
Thus we may and do assume $A^\alpha\in \cC_2$. 

We denote by $\alpha^\infty_g$ the automorphism of $A^\infty$ induce by $\alpha_g$. 
Since the infinite direct sum of the regular representation is equivalent to 
$\bigoplus_{\pi\in \hG} \pi^{\oplus \infty}$, 
it suffices to show that there exists a Hilbert space $\cK_\pi$ in $A^\infty\cap A'$ globally $\alpha^\infty$-invariant and 
equivalent to $\pi$ for each $\pi\in \hG$ (not necessarily with support 1), and there exists a unital embedding 
of $\cO_\infty$ into $(A^\alpha)^\infty \cap A'$. 
Indeed, we choose an orthonormal basis $\{\eta(\pi)\}_{i=1}^{d(\pi)}$ of $\cK_\pi$ for each $\pi\in \hG$, and isometries 
$\{S_{\pi,n}\}_{(\pi,n)\in \hG\times \N}$ in $(A^\alpha)^\infty\cap A'$ with mutually orthogonal ranges. 
Then 
\[\{S_{\pi,n}\eta(\pi)_i;\; \pi\in \hG,\; 1\leq i\leq d(\pi),\; n\in\N\}\]
give a desired set of generators of $\cO_\infty$ in $A^\infty \cap A'$. 

For each $\cH\in \cU(G,\alpha)$, we apply Theorem \ref{dilation} to $\rho_\cH$, and obtain an isometry 
$V_{\cH}\in (A^\alpha)^\infty$ satisfying $\rho_\cH(x)=V_{\cH}^*xV_{\cH}$ for all $x\in A$. 
We set $\cK_{\cH}=V_{\cH}\cH$, which is a Hilbert space in $A^\infty$, and the restriction of $\alpha^\infty$ to $\cK_{\cH}$ 
is equivalent to $\cH$. 
For $x\in A$, we have 
\[\psi(\cH)_i^*V_{\cH}^*xV_\cH\psi(\cH)_i=\psi(\cH)_i^*\rho_{\cH}(x)\psi(\cH)_i=x,\]
and 
\begin{align*}
\lefteqn{[x,V_{\cH}\psi(\cH)_i]^*[x,V_{\cH}\psi(\cH)_i]} \\
 &=\psi(\cH)_i^*V_{\cH}^*x^*xV_\cH\psi(\cH)_i-\psi(\cH)_i^*V_{\cH}^*x^*V_\cH\psi(\cH)_ix-x^*\psi(\cH)_i^*V_{\cH}^*xV_\cH\psi(\cH)_i
 +x^*x\\
 &=0.
\end{align*}
Thus the C$^*$-condition implies $\cK_{\cH}\subset A^\infty \cap A'$. 
We let $\cK_\pi=\cK_{\cH_\pi}$. 

When $\cH$ is equivalent to $1\oplus 1$, we have $\cK_{\cH}\subset (A^\alpha)^\infty\cap A'$. 
This shows that there exist two isometries with mutually orthogonal ranges in $(A^\alpha)^\infty\cap A'$, 
and there exists a unital embedding of $\cO_\infty$ in $(A^\alpha)^\infty\cap A'$. 
\end{proof}

\section{Quasi-free actions on the Cuntz algebras}
Before ending this paper, we discuss quasi-free actions of a compact group $G$ on the Cuntz algebras $\cO_n$ and $\cO_\infty$ 
as applications of our main results.  
We follow the convention in \cite{FLR00} for graph C$^*$-algebras. 

\begin{example} Let $\rho$ be a faithful unitary representation of a compact group $G$ in $\M_n(\C)$, and let $\alpha$ be 
the quasi-free action of $G$ arising from $\rho$ on $\cO_n$. 
It is known that $\cO_n\cap ({\cO_n}^{U(n)})'$ is trivial (see \cite[Theorem 3.2]{BE86}, \cite[Corollary 3.3]{DR87}), 
and hence $\alpha$ is minimal. 
Thus if ${\cO_n}^\alpha$ is simple, Theorem \ref{main1} implies that $\alpha$ is quasi-product. 
In fact, \cite[Theorem 3.1]{DR87} shows that ${\cO_n}^\alpha$ is simple if $\rho(G)\subset SU(n)$. 
We can relax this assumption to the following: 
\begin{itemize}
\item[($*$)] For every $\pi\in \hG$, there exists $n\geq 0$ such that $\pi$ is contained in $\rho^{\otimes n}$. 
\end{itemize}
\cite[Theorem 7.1]{KPRR97} shows that ${\cO_n}^\alpha$ is isomorphic to a corner of the graph algebra $C^*(\cG_\rho)$ of 
the following graph $\cG_\rho$: 
the vertex set $\cG_\rho^0$ is $\hG$, and the number of edges from $\sigma\in \hG$ to $\pi\in \hG$ is 
$\dim \Hom_G(\pi,\rho\otimes \sigma)$. 
Now \cite[Theorem 3, 4]{FLR00} imply that ${\cO_n}^\alpha$ is purely infinite 
and simple under the condition ($*$), and our main results show that $\alpha$ is isometrically shift-absorbing. 
It is very likely that ($*$) is also necessary for ${\cO_n}^\alpha$ to be purely infinite and simple, but it does not seem 
to be known yet. 
\end{example}

For the Cuntz algebra $\cO_\infty$, we have the following statement, previously known only for abelian $G$ 
(see \cite[Proposition 7.4]{Ka01}, \cite[Theorem 5.1]{K80}, \cite[Corollary 4.17]{Mu24},  
and also \cite{CP92} for a related result). 

\begin{proposition} Let $(\rho,H)$ be a faithful unitary representation of a compact group $G$ on a separable infinite dimensional 
Hilbert space $H$, and let $\alpha$ be the corresponding quasi-free action of $G$ on $\cO_\infty$.  
Then the following conditions are equivalent:
\begin{itemize}
\item[$(1)$] $\alpha$ is isometrically shift-absorbing.
\item[$(2)$] ${\cO_\infty}^\alpha$ is purely infinite and simple.
\item[$(3)$] ${\cO_\infty}^\alpha$ is simple.
\item[$(4)$] The Condition $(*)$ holds. 
\end{itemize}
\end{proposition}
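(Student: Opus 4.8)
The plan is to prove the cycle of implications $(1)\Rightarrow(2)\Rightarrow(3)\Rightarrow(4)\Rightarrow(1)$. The implication $(2)\Rightarrow(3)$ is trivial, and $(1)\Rightarrow(2)$ is Mukohara's theorem quoted in the introduction: an isometrically shift-absorbing compact group action has purely infinite simple fixed point algebra \cite{Mu24}. The real content is thus the combinatorial characterization $(3)\Leftrightarrow(4)$ and the final step $(4)\Rightarrow(1)$, and both will be extracted from a graph model of the fixed point algebra.

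First I would set up this model exactly as in the Example preceding the proposition, but now for $\cO_\infty$. By the Kumjian--Pask--Raeburn--Renault realization \cite{KPRR97}, ${\cO_\infty}^\alpha$ is isomorphic to a full corner of the graph C$^*$-algebra $C^*(\cG_\rho)$, where $\cG_\rho$ has vertex set $\hG$ and has $\dim\Hom_G(\pi,\rho\otimes\sigma)$ edges from $\sigma$ to $\pi$. The decisive new feature of the infinite-dimensional case is that every vertex is an infinite emitter: since $\dim H=\infty$, the representation $\rho\otimes\sigma$ is infinite dimensional for every $\sigma\in\hG$, so each vertex emits infinitely many edges and $\cG_\rho$ has no sinks. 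Consequently the saturation condition is vacuous, every hereditary set of vertices is automatically saturated and corresponds to a gauge-invariant ideal, and the usual breaking-vertex subtleties disappear. Note also that a path runs from $\sigma$ to $\pi$ precisely when $\pi\subset\rho^{\otimes n}\otimes\sigma$ for some $n\geq 0$; hence the set of vertices reachable from the trivial representation is exactly the set $\cR$ of $\pi$ occurring in some $\rho^{\otimes n}$, and $(*)$ is the assertion $\cR=\hG$.

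For $(3)\Rightarrow(4)$ I argue by contraposition: if $(*)$ fails, then $\cR$ is a proper, nonempty, hereditary (hence saturated) set, so $C^*(\cG_\rho)$ has a nontrivial gauge-invariant ideal and is not simple, and by Morita invariance neither is ${\cO_\infty}^\alpha$. For $(4)\Rightarrow(1)$ I would first show that $(*)$ makes ${\cO_\infty}^\alpha$ a Kirchberg algebra. Cofinality: if $W\neq\emptyset$ is hereditary and $\sigma\in W$, then $\bar\sigma\in\cR$ by $(*)$, so $1\subset\sigma\otimes\bar\sigma\subset\rho^{\otimes n}\otimes\sigma$ for some $n$, whence the trivial vertex lies in $W$ and therefore, again by $(*)$, all of $\hG$ does; thus $\cG_\rho$ is cofinal. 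Since every vertex is an infinite emitter, every cycle has an exit, condition (L) holds, and $C^*(\cG_\rho)$ is simple by \cite[Theorem 3]{FLR00}. Moreover $(*)$ produces a cycle at the trivial vertex (take any $\mu\subset\rho$ and use $\bar\mu\subset\rho^{\otimes n}$ to get $1\subset\mu\otimes\bar\mu\subset\rho^{\otimes(n+1)}$), so the simple algebra $C^*(\cG_\rho)$ is not AF and hence purely infinite (equivalently, apply \cite[Theorem 4]{FLR00}); this already yields $(4)\Rightarrow(2)$. Being also separable and nuclear, ${\cO_\infty}^\alpha$ is a Kirchberg algebra. The action $\alpha$ is faithful because $\rho$ is, and it is minimal by the relative-commutant computation for quasi-free actions recalled in the Example, which gives $({\cO_\infty}^\alpha)'\cap\cO_\infty=\C$ from $\rho$ being faithful \cite{BE86,DR87}. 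Theorem \ref{main2} then applies and shows that $\alpha$ is isometrically shift-absorbing, closing the cycle.

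The hard part will be the first step: pinning down the graph model of ${\cO_\infty}^\alpha$ and treating the ideal theory of the non-row-finite graph $\cG_\rho$ correctly, together with the AF-versus-purely-infinite dichotomy for simple graph C$^*$-algebras and the minimality input. The infinite-emitter observation is precisely what removes these difficulties and is, I expect, the reason the full equivalence holds for $\cO_\infty$ while only the sufficiency of $(*)$ is known for $\cO_n$; verifying it and the relative-commutant statement carefully is where the work lies.
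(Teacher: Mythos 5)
Your cycle agrees with the paper for $(1)\Rightarrow(2)$ (Mukohara), $(2)\Rightarrow(3)$, the graph-algebra proof of $(4)\Rightarrow(2)$ via \cite{FLR00}, and the final appeal to Theorem \ref{main2}; but your $(3)\Rightarrow(4)$ is both different from the paper's and, as written, has a genuine gap. The corner of $C^*(\cG_\rho)$ that realizes ${\cO_\infty}^\alpha$ is the corner at the \emph{trivial} vertex, and that corner is full precisely when the saturated hereditary closure of the trivial vertex is all of $\hG$ --- that is, precisely when $(*)$ holds. So in the contrapositive situation you set up (where $(*)$ fails and $\cR\subsetneq\hG$), the corner is \emph{not} full: it is Morita equivalent only to the ideal $I_{\cR}$, i.e.\ to the graph algebra of the restricted graph on $\cR$, and the non-simplicity of the ambient $C^*(\cG_\rho)$ says nothing about the corner. "By Morita invariance neither is ${\cO_\infty}^\alpha$" therefore does not follow; you are implicitly using the fullness you are trying to negate. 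The step can be repaired, but it needs an extra idea: one must produce a proper nonempty hereditary subset of $\cR$ itself. Since $\cR$ is closed under subobjects and tensor products and contains the faithful representation $\rho$, if it were also closed under conjugation it would be all of $\hG$ by Tannaka--Krein; hence $\cR\neq\hG$ forces some $\sigma\in\cR$ with $\overline{\sigma}\notin\cR$, and the vertices reachable from $\sigma$ then form a proper hereditary subset of $\cR$ (it misses the trivial vertex). None of this appears in your argument. The paper avoids the issue entirely: from simplicity of ${\cO_\infty}^\alpha$ and minimality it deduces simplicity of $\cO_\infty\rtimes_\alpha G$, so the covariant Fock representation is faithful on the crossed product, so $\cF(\rho)=\bigoplus_n\rho^{\otimes n}$ gives a faithful representation of $C^*(G)$, which is exactly $(*)$.

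Two secondary points. First, you cite \cite{KPRR97} for the identification of ${\cO_\infty}^\alpha$ with a corner of $C^*(\cG_\rho)$, but the paper explicitly notes that the argument of \cite{KPRR97} does not apply here (the graph is not row-finite); the paper instead constructs the Cuntz--Krieger family $\{S_e,1\otimes p_\pi\}$ inside $(\cO_\infty\otimes\K(K))^G$ by hand, obtaining only a surjection $C^*(\cG_\rho)\to B$, which suffices for $(4)\Rightarrow(2)$ because $(*)$ makes $C^*(\cG_\rho)$ simple. Your $(3)\Rightarrow(4)$ would additionally need injectivity of this surjection without assuming $(*)$ (a gauge-invariant uniqueness argument), which you do not supply. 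Second, your minimality input for $(4)\Rightarrow(1)$ cites the relative-commutant computations of \cite{BE86}, \cite{DR87}, which concern $\cO_n$ with $n$ finite; the paper instead gets minimality from the fact that the vacuum state is an $\alpha$-invariant pure state, so that simplicity of the fixed point algebra already makes the action quasi-product and hence minimal. This is easily fixed, but the $(3)\Rightarrow(4)$ gap above is substantive.
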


\begin{proof} Let 
\[(\cF(H),\cF(\rho))=(\bigoplus_{n=0}^\infty H^{\otimes n},\bigoplus_{n=0}^\infty \rho^{\otimes n}).\] 
By convention $H^{\otimes 0}=\C\Omega$ where $\Omega$ is the vacuum vector. 
We regard $\cO_\infty$ as a concrete C$^*$-algebra acting on the full Fock space $\cF(H)$ generated by the left creation operators 
$l(\xi)$, $\xi\in H$. 
Then the quasi-free action $\alpha$ is the restriction of $\Ad \cF(\rho)$ to $\cO_\infty$. 
Since the vacuum state is $\alpha$-invariant and pure on $\cO_\infty$, if ${\cO_\infty}^\alpha$ simple,  
the action $\alpha$ is quasi-product, and in particular it is is minimal.  

(1) $\Rightarrow$ (2). This is shown in \cite[Example 4.5]{Mu24}. 

(2) $\Rightarrow$ (1). This follows from Theorem \ref{main2}. 

(2) $\Rightarrow$ (3). This is trivial.   

(3) $\Rightarrow$ (4). Since ${\cO_\infty}^\alpha$ is simple and $\alpha$ is minimal, 
the crossed product $\cO_\infty\rtimes_\alpha G$ is simple (see \cite[Proposition A]{Mu24}). 
Thus the defining representation of $\cO_\infty$ and $\cF(\rho)$ give a faithful representation of 
$\cO_\infty\rtimes_\alpha G$ on $\cF(H)$, and hence $\cF(\rho)$ gives rise to a faithful representation of the group 
C$^*$-algebra of $G$, which implies (4). 

(4) $\Rightarrow$ (2). We show the implication by identifying ${\cO_\infty}^\alpha$ with a corner of $C^*(\cG_\rho)$ as 
in the case of $\cO_n$. 
Since the argument in \cite{KPRR97} does not work in this case, we give a more direct argument here. 
We let $K=\bigoplus_{\pi\in \hG}H_\pi$, and equip $\K(K)$ with the $G$-action given by $\Ad(\bigoplus_{\pi\in \hG}\pi)$. 
We first embed $C^*(\cG_\rho)$ into $(\cO_\infty\otimes \K(K))^G$. 
 
For each $\pi,\sigma\in \hG$, we fix an orthonormal basis $\{e\}$ of  $\Hom_G(H_\pi,H\otimes H_\sigma)$, where the inner product 
of $e$ and $f$ is given by $\inpr{e}{f}=f^*e\in \End_G(H_\pi)=\C 1_\pi$. 
We identify $\{e\}$ with the set of edges from $\sigma$ to $\pi$, and $\cup_{\pi,\sigma\in \hG}\{e\}$ 
with the edge set $\cG_\rho^1$. 
For $e$, we define $S_e$ to be the corresponding element in 
\[\left(l(H)\otimes (H_\sigma\otimes H_\pi^*)\right)^G\subset (\cO_\infty\otimes \K(K))^G,\]
where $H_\sigma\otimes H_\pi^*$ is identified with a subspace of $\K(K)$ in a natural way.  
Then we have $S_e^*S_e=1\otimes p_\pi$ and $S_eS_e^*\leq 1\otimes p_\sigma$, 
where $p_\pi$ is the projection from $K$ onto $H_\pi$.  
If $e_1,e_2,\ldots,e_n\in \cG_\rho^1$ are distinct edges with a common source $\sigma$, we have 
$S_{e_i}^*S_{e_j}=\delta_{i,j}1\otimes p_{r(e_i)}$, which shows that $\{S_{e_i}S_{e_i}^*\}_{i=1}^n$ are mutually orthogonal 
subprojections of $1\otimes p_\sigma$. 
Thus $\{S_e,1\otimes p_\pi\}$ form a Cuntz-Krieger $\cG_\rho$-family, and there exists a surjection 
from the graph algebra $C^*(\cG_\rho)$ onto the C$^*$-algebra $B$ generated by $\{S_e\}_{e\in \cG_\rho^1}$. 
\cite[Theorem 3, 4]{FLR00} show that $C^*(\cG_\rho)$ is purely infinite and simple thanks to the condition 
($*$), and so is $B$.  
Now our task is to show $(1\otimes p_1)B(1\otimes p_1)={\cO_\infty}^\alpha\otimes p_1$, which implies (2). 

Let $\cG_\rho(1,\pi;n)$ be the set of paths $\xi$ in $\cG_\rho$ of length $n$ satisfying $s(\xi)=1$ and $r(\xi)=\pi$. 
Then the linear span of $\{S_\xi\}_{\xi\in \cG_\rho(1,\pi;n)}$ is dense in $(l(H)^n\otimes (H_1\otimes H_\pi^*))^G$ by construction,
and so the linear span of 
\[\{S_\xi S_\eta^*;\;\xi\in \cG_\rho(1,\pi;n),\;\eta\in \cG_\rho(1,\pi;m),\; \pi\in \hG\}\] 
is dense in $(l(H)^n {l(H)^m}^*)^G\otimes p_1$. 
Thus $(1\otimes p_1)B(1\otimes p_1)={\cO_\infty}^\alpha\otimes p_1$. 
\end{proof}

The following is a consequence of our main results and \cite[Corollary 6.4]{GS22-2}, 
and it is a generalization of \cite[Theorem 5.1]{GI11} in the finite group case. 

\begin{cor} For a compact group $G$, any two faithful quasi-free actions of $G$ on the Cuntz algebras $\cO_\infty$ satisfying 
the condition $(*)$ are mutually conjugate.  
Such actions are absorbed through tensor product by every faithful minimal action of $G$ on a Kirchberg algebra whose fixed point 
algebra is a Kirchberg algebra. 
(Note that when $G$ is finite or $SU(n)$, the condition $(*)$ is automatically satisfied.)
\end{cor}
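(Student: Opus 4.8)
The plan is to recognize every faithful quasi-free action satisfying $(*)$ as an amenable isometrically shift-absorbing action on a Kirchberg algebra, and then to feed this into the Gabe--Szab\'o classification \cite{GS22-2}. First I would record the input from the preceding Proposition: for a faithful quasi-free action $\gamma$ of $G$ on $\cO_\infty$, condition $(*)$ is equivalent to $\gamma$ being isometrically shift-absorbing and forces ${\cO_\infty}^\gamma$ to be purely infinite and simple, hence a Kirchberg algebra; moreover the vacuum state exhibits $\gamma$ as minimal. Since $\cO_\infty$ is a Kirchberg algebra and $G$, being compact, is amenable, $\gamma$ falls squarely within the scope of \cite{GS22-2}, whose classification reduces conjugacy of such actions to $KK^G$-equivalence.

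For the first assertion I would then argue that all faithful quasi-free $(*)$-actions carry the same $KK^G$-class, namely that of the trivial action on $\C$: concretely, that the $G$-equivariant unital inclusion $(\C,\mathrm{id})\hookrightarrow(\cO_\infty,\gamma)$ is a $KK^G$-equivalence. This is the equivariant refinement of Cuntz's computation $K_*(\cO_\infty)\cong K_*(\C)$, and condition $(*)$ is exactly what lets the ``extra dimension'' homotopy be carried out $G$-equivariantly, since it guarantees that the underlying $G$-Hilbert space absorbs every $\pi\in\hG$. Granting this, any two such actions are $KK^G$-equivalent and therefore conjugate by \cite{GS22-2}; this is the promised generalization of \cite[Theorem 5.1]{GI11}. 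Alternatively, one may simply invoke the uniqueness part of \cite[Corollary 6.4]{GS22-2} for amenable isometrically shift-absorbing actions on $\cO_\infty$, which bypasses the explicit $KK^G$-computation.

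The second assertion then follows by absorption. If $\alpha$ is a faithful minimal action of $G$ on a Kirchberg algebra $A$ with $A^\alpha$ a Kirchberg algebra, then $\alpha$ is isometrically shift-absorbing by Theorem \ref{main2}; by the defining central embedding together with a McDuff-type argument (cf. \cite{GS22-2}) it tensorially absorbs the canonical model quasi-free action $\gamma_\infty$ arising from $\infty\cdot\lambda$, i.e. $\alpha\otimes\gamma_\infty\cong\alpha$. Since $\infty\cdot\lambda$ satisfies $(*)$ (the regular representation already contains every irreducible), the first assertion gives $\gamma\cong\gamma_\infty$ for every $(*)$-action $\gamma$, whence $\alpha\otimes\gamma\cong\alpha\otimes\gamma_\infty\cong\alpha$. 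For the parenthetical remark, $(*)$ holds automatically when $G$ is finite (Brauer's theorem: every irreducible occurs in a tensor power of a faithful representation) and when $G=SU(n)$ (its irreducibles are built from tensor powers of the standard representation).

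The hard part is the $KK^G$-input underlying the first assertion: establishing that the unit inclusion is a $KK^G$-equivalence for every $(*)$-representation, which is where condition $(*)$ does genuine work and requires an equivariant version of the Cuntz/Pimsner--Toeplitz six-term sequence. Everything else --- verifying the hypotheses of \cite{GS22-2}, the McDuff-type absorption of the model $\gamma_\infty$, and the two classical instances of $(*)$ --- is routine bookkeeping once this input, or the corresponding statement of \cite[Corollary 6.4]{GS22-2}, is in hand.
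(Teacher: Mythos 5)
Your proposal is correct and, through the ``alternative'' you yourself mention, coincides with the paper's argument: the corollary is obtained by combining the preceding Proposition (condition $(*)$ makes a faithful quasi-free action isometrically shift-absorbing on a Kirchberg algebra), Theorem \ref{main2}, and the uniqueness/absorption statement of \cite[Corollary 6.4]{GS22-2}, with no equivariant $KK$-computation anywhere. The primary route you sketch --- proving that the unital inclusion $(\C,\id)\hookrightarrow(\cO_\infty,\gamma)$ is a $KK^G$-equivalence --- is an unnecessary detour whose key step you explicitly leave unproved, so you should simply take the shortcut via \cite[Corollary 6.4]{GS22-2} as the paper does.
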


\section*{Acknowledgments}
The author would like to thank Miho Mukohara for stimulating discussions and Narutaka Ozawa 
for useful discussions on the CPAP. 


\end{document}